\numberwithin{equation}{section}
\theoremstyle{plain}
\newtheorem{theorem}{Theorem}[section]
\newtheorem{proposition}[theorem]{Proposition}
\newtheorem{corollary}[theorem]{Corollary}
\newtheorem{lemma}[theorem]{Lemma}
\theoremstyle{definition}
\newtheorem{example}[theorem]{Example}
\newtheorem{remark}[theorem]{Remark}
\newtheorem{definition}[theorem]{Definition}
\newcommand{\disc}{\mathbb{D}}
\newcommand{\qa}{\mathcal{Q}}
\newcommand{\pa}{\mathcal{P}}
\newcommand{\pc}{\mathcal{P}_c}
\newcommand{\pmo}{\mathcal{P}_m}
\newcommand{\ps}{\mathcal{P}_{sym}}
\DeclareMathOperator*{\p+}{\mathcal{P_{+}}}
\newcommand{\pb}{\mathcal{P}^2}
\newcommand{\real}{\mathbb{R}}
\newcommand{\tor}{\mathbb{T}}
\DeclareMathOperator*{\com+}{\mathbb{C}_+}
\newcommand{\comp}{\mathbb{C}}
\newcommand{\nat}{\mathbb{N}}
\newcommand{\im}{\text{Im}}
\newcommand{\re}{\text{Re}}
\newcommand{\supp}{\text{supp}\,}
\newcommand{\rt}{{\kern0.2em \rhd_{\displaystyle\scriptscriptstyle T}}}
\newcommand{\pn}{\mathcal{P}(n)}
\newcommand{\ncpn}{\mathcal{NC}(n)}
\newcommand{\mpn}{\mathcal{M}(n)}
\newcommand{\lncpn}{\mathcal{LNC}(n)}
\newcommand{\mrhd}{\kern0.3em\rule{0.1mm}{0.52em}\kern-.35em\gtrdot}
\begin{document}
\title{Conditionally monotone independence I: Independence, additive convolutions and related convolutions}
\author{Takahiro Hasebe\footnote{This work was supported by Grant-in-Aid for JSPS Fellows.} \\ Graduate School of Science, Kyoto University,  \\ Kyoto 606-8502, Japan \\ 
Email: hsb@kurims.kyoto-u.ac.jp}
\date{}

\maketitle

\begin{abstract}
We define a product of algebraic probability spaces equipped with two states. 
This product is called a conditionally monotone product. This product is a new 
example of independence in non-commutative probability theory and unifies the monotone and Boolean products, and moreover, the orthogonal product. Then we define the associated cumulants and calculate the limit distributions in central limit theorem and Poisson's law of small numbers. We also prove a combinatorial moment-cumulant formula using monotone partitions. 
We investigate some other topics such as infinite divisibility for the additive convolution and deformations of the monotone convolution.  We define cumulants for a 
general  convolution  to analyze the  deformed convolutions.  
\end{abstract}

Mathematics Subject Classification: 46L53; 46L54 

Keywords: Conditionally free independence; monotone independence; Boolean independence; free independence; cumulants 

\section{Introduction} 
Non-commutative probability theory lays the foundation of quantum mechanics and has many mathematical branches. The basic framework consists 
of a (unital) $\ast$-algebra $\mathcal{A}$ and a state $\varphi$ on it.  The pair $(\mathcal{A}, \varphi)$ is called an algebraic probability space 
or a non-commutative probability space.   When $\mathcal{A}$ has structure of a $C^\ast$-algebra (resp. von Neumann algebra), we call the pair 
a $C^\ast$- (resp. von Neumann) algebraic probability space.  

Many kinds of independence have been studied as an aspect of non-commutative 
probability theory. The usual independence in probability theory is called tensor independence from the non-commutative probabilistic viewpoint. 
Other famous ones are free independence defined by Voiculescu \cite{V1}, Boolean independence by Speicher and Woroudi \cite{S-W} and monotone independence by Muraki 
 \cite{Mur3}. These kinds of independence can canonically be realized by using products of states on the free product of algebras (with or without the identification of units): 
there are two ``universal products'' (tensor and free) defined on the free product of algebras with the identification of units \cite{B-S1,Spe1}; there are three universal products (tensor, free and Boolean) defined on the free product of algebras without the identification of units; there are five ``natural products'' (tensor, free, Boolean, monotone and anti-monotone) defined on the free product of algebras without the identification of units \cite{Mur4,Mur5}. These results can also be understood in terms of tensor structures with inclusions \cite{BFGKT}. 
In particular, monotone and Boolean products are important in this paper. 
Moreover, the conditionally (c- for short) free product of states was introduced by Bo\.zejko, Leinert and Speicher \cite{BLS,BS}.  
This product can be seen as a universal product of pairs of states which can be defined similarly to the single state case. As such a concept 
has not been defined in the literature, we shall systematically study it elsewhere. 

In \cite{BLS} it was proved that the c-free product and cumulants unify the free and Boolean products and their cumulants introduced in \cite{Spe2,S-W,V2}.  
In addition, the c-free product also unifies the monotone product as proved in \cite{Fra}. While the latter is quite nontrivial, some complication appears in its application: 
it is difficult to repeat the calculation of monotone products in terms of c-free products; it is difficult to identify monotone cumulants \cite{H-S} in terms of c-free cumulants. 
The latter difficulty is essentially the same as the former. The solution of these difficulties is a purpose of this paper. 

To this end, in Section \ref{Condi} we introduce a c-monotone product analogously to the c-free product. Once it is introduced, the monotone and Boolean products can be formulated in terms of it. The concept of c-monotone independence can also be extracted from the c-monotone product since the product is associative. In terms of probability measures, we can also define (additive) c-monotone convolutions. We prove that c-monotone independence and c-free one include orthogonal independence \cite{Len1} as special cases. Therefore, the additive c-monotone convolution unifies the additive monotone, Boolean and orthogonal convolutions. As a result, c-monotone convolutions can give the characterization of orthogonal convolutions (Theorem 6.2 of \cite{Len1}).

 In Section \ref{Cumulants112} we introduce c-monotone cumulants $r_n(\mu, \nu)$ to linearize powers of probability measures. 
A moment-cumulant formula is proved by using combinatorics of monotone partitions; this formula is naturally expected from the monotone case \cite{H-S}. 
An important point is that c-monotone cumulants generalize monotone and Boolean cumulants. Here we achieve a purpose of this paper.  

The remaining sections are roughly divided into two parts; one is devoted to infinitely divisible distributions, and the other is to deformations of the monotone convolution.

In Sections \ref{Inf1}--\ref{Inf4}, we investigate convolution semigroups and infinitely divisible distributions. Additive monotone and Boolean infinitely divisible distributions were first studied in \cite{Mur3} and \cite{S-W}, respectively. The results in this paper generalize these studies:   
we prove the L\'{e}vy-Khintchine formula, and the correspondence among a convolution semigroup, an infinitely divisible distribution, a pair of  vector fields and a positive definite sequence of cumulants. 

Moreover, we construct convolution semigroups from monotone and Boolean ones. As a result, c-monotone cumulants $r_n(\cdot, \nu)$, for a fixed $\nu$, turn out to linearize the Boolean convolution.  We note that infinite divisibility was introduced and studied in \cite{Kry1} for c-free independence. Results on c-monotone infinite divisibility however do not follow from  the c-free case.

In Sections \ref{deform11}--\ref{Lim2} we work on deformations of the monotone convolution. This topic may impress the reader 
as specialized at first sight; however this clarifies how the structure of c-monotone independence behaves analogously to that of c-free independence.    A deformation of the free convolution can be defined in a graph of probability measures \cite{BW1,BW2,KW,KY,O1,O2}. More precisely, if $T$ is a map from the set of probability measures to itself, we can define the graph $\{(\mu, T\mu);\mu \text{~is a probability measure} \}$. If this graph is closed under the c-free convolution, we can define an associative convolution. For the details, the reader is referred to Section \ref{deform11}. Analogously, a deformed convolution arises from the c-monotone convolution of a graph of probability measures. These kinds of convolutions include the monotone and Boolean convolutions. We show many examples of such deformed convolutions. A remarkable point is that such maps $T$, found in the context of c-free convolutions, give associative convolutions also in the c-monotone case. 

The Boolean and monotone convolutions preserve the sets $\{\mu; \supp \mu \subset [0, \infty) \}$ and $\{\mu; \mu \text{ is symmetric} \}$. The former property can be proved easily in terms of the operator-theoretic approach in \cite{Fra3}; the latter can be proved by using the complex-analytic characterizations of the convolutions. As an extension of these properties, we give necessary and sufficient conditions under which the deformed convolution explained above preserves the two sets.  

We introduce the cumulants for the convolution deformed by a map $T$ and  then limit distributions are calculated for some class of such convolutions. 
When we introduce the cumulants of the deformed convolutions, the axiom of homogeneity for cumulants does not hold in general (see (\ref{C2''})). For this reason we consider the uniqueness and the existence of cumulants of a general convolution in Section \ref{Cum}.

Let us mention a few topics which are not covered in this paper.  Multiplicative convolutions and the infinite divisibility were studied in \cite{Ber1,Ber2,Fra,Fra2,Len2} in the Boolean, monotone and orthogonal cases. Multiplicative c-monotone convolutions can be similarly defined to 
generalize the monotone, Boolean and orthogonal convolutions.  We do not treat these in this paper; these aspects will be studied in \cite{Has4}.

\section{Preliminaries}
\subsection{Reciprocal Cauchy transform}
We use the notation $\com+:=\{z \in \comp; \im z > 0 \}$. 
The Cauchy transform of a probability measure $\mu$ is defined by 
\begin{equation}
G_\mu (z) = \int_{\real} \frac{1}{z-x} d\mu(x), ~z \in \comp \backslash \real. 
\end{equation}
The reciprocal Cauchy transform of a probability measure $\mu$ is defined by 
\begin{equation}
H_\mu (z) = \frac{1}{G_\mu (z)}, ~z \in \comp \backslash \real.  
\end{equation}
This is an analytic map from $\com+$ to $\com+$. Since $\lim_{y \to \infty}iyG_\mu(iy) =1$, $H_\mu$ has the following form: 
\begin{equation}\label{nev}
H_\mu (z) = b + z + \int_{\real} \frac{1 + xz}{x-z} d\eta(x),
\end{equation}
where $b \in \real$ and $\eta$ is a positive finite measure. Conversely, 
any function of the form of the right hand side of (\ref{nev}) is a reciprocal Cauchy transform of a probability measure (see \cite{Akh,Maa} for details).

\subsection{Monotone independence}
Muraki defined the concept of monotone independence in \cite{Mur3}. A definition is as follows. 
Let $(\mathcal{A}, \varphi)$ be an algebraic probability space and let $I$ be a linearly ordered set. A family of subalgebras $\{\mathcal{A}_i \}_{i \in I}$ is said to be monotone independent if the equality
\begin{equation}
\varphi (a_1 a_2 \cdots a_n)  = \varphi (a_j) \varphi (a_1a_2 \cdots a_{j-1} a_{j+1} \cdots a_n)
\end{equation} 
holds for $a_k \in \mathcal{A}_{i_k}$ with $ i_1, i_2, \cdots, i_n \in I$, $i_{j-1} < i_j > i_{j+1}$ and $1 \leq j \leq n$. When $j=1$ (resp. $j=n$), the condition $i_{j-1} < i_j > i_{j+1}$ 
is understood to be $i_1 > i_2$ (resp. $i_{n-1} < i_n$). 
The monotone convolution $\mu \rhd \nu$ is defined for two probability measures $\mu$, $\nu$ and is characterized by the formula 
\begin{equation}\label{mur}
H_{\mu \rhd \nu} = H_{\mu} \circ H_{\nu}. 
\end{equation}
The monotone convolution is non-commutative and associative. 

Let $\ncpn$ be the set of all non-crossing partitions \cite{N-S1}.  
Let $\mpn$ be the set of all \textit{monotone partitions} defined by 
\begin{equation}
\mpn := \{(\pi, \lambda): \pi \in \ncpn,~ \text{if $V, W \in \pi$ and $V$ is in the inner side of $W$, then $V >_{\lambda} W$} \},  
\end{equation}
where $\lambda $ denotes a linear ordering of the blocks of $\pi$. $V >_{\lambda} W$ means that $V$ is larger than $W$ under the linear ordering $\lambda$ (see \cite{Mur4,Mur5}, and also \cite{Len3,Len4}).  

In the paper \cite{H-S} the concept of monotone cumulants has been defined. Monotone cumulants do not satisfy 
the additivity for general probability measures, but satisfy the power additivity: $r_n(\mu ^{\rhd N}) = Nr_n(\mu)$. 
The moment-cumulant formula is described as 
\begin{equation}
m_n(\mu) = \sum_{(\pi, \lambda) \in \mpn} \frac{1}{|\pi|!}\prod_{V \in \pi} r_{|V|}(\mu). 
\end{equation}
\begin{example} 
We exhibit the moment-cumulant formula until the forth order. 
\begin{equation*}
\begin{split}
& m_1(\mu) = r_1(\mu), \\
& m_2(\mu) = r_2(\mu) + r_1 (\mu)^2, \\
& m_3(\mu) = r_3(\mu) + \frac{5}{2}r_1(\mu) r_2(\mu) + r_1(\mu) ^3, \\
& m_4(\mu) = r_4(\mu) + 3r_1 (\mu)r_3 (\mu)+ \frac{3}{2}r_2(\mu) ^2 + \frac{13}{3}r_1(\mu) ^2 r_2 (\mu)+ r_1(\mu) ^4.  
\end{split}
\end{equation*}
\end{example}

\subsection{Conditionally free independence} Let $I$ be an index set. 
Let $\mathcal{A}_i$ be a unital $*$-algebra and let $\varphi_i$, $\psi_i$ be states on $\mathcal{A}_i$ for $i \in I$. 
The c-free product of triples $(\mathcal{A}_i, \varphi_i, \psi_i)_{i \in I}$ was defined by Bo\.{z}ejko and Speicher in \cite{BS}. 
We define $(\mathcal{A}, \varphi, \psi) = \ast_{i \in I}(\mathcal{A}_i, \varphi_i, \psi_i)$ by setting $\mathcal{A}:= \ast_{i \in I}\mathcal{A}_i$ (the free product with the identification of units) and 
$\psi := \ast_{i \in I}\psi_i$ (the free product of states). $\varphi$ is defined by the following condition: the equality
\begin{equation}
\varphi (a_1 \cdots a_n) = \prod_{k = 1}^{n} \varphi_{i_k}(a_k)
\end{equation}
holds if $a_k \in \mathcal{A}_{i_k}$ with $i_1 \neq \cdots \neq i_n$ and $\psi_{i_k}(a_k) = 0$ for all $1 \leq k \leq n$.    
If the index set $I$ consists of two elements, that is, $|I| = 2$, $\varphi$ is denoted by $\varphi_1 {}_{\psi_1} \!\!\ast_{\psi_2}\!\! \varphi_2$. 

Let $\mu,~\nu$ be probability measures on $\real$ with compact supports. 
Define the $R$-transform of $\nu$ and the c-free $R$-transform of $(\mu, \nu)$ by  
\begin{gather}
\frac{1}{G_{\nu}(z)} = z - R_{\nu}(G_{\nu}(z)), \label{rel1}\\
\frac{1}{G_{\mu}(z)} = z - R_{(\mu, \nu)}(G_{\nu}(z)). \label{rel2}  
\end{gather}
We expand $R_{(\mu, \nu)}(z) = \sum_{n = 1} ^{\infty}R_n(\mu, \nu)z^{n-1}$ as a formal power series. $R_n(\mu, \nu)$ are called c-free cumulants. Similarly, we expand $R_\nu(z) =  \sum_{n = 1} ^{\infty}R_n(\nu)z^{n-1}$ and 
$R_n(\nu)$ are called free cumulants. In this paper, $H_\mu(z)$ is more useful than $G_\mu(z)$, and correspondingly, we use $\phi_{(\mu, \nu)}(z):=R_{(\mu, \nu)}(\frac{1}{z})$ and $\phi_{\nu}(z):=R_\nu(\frac{1}{z})$. Then (\ref{rel1}) and (\ref{rel2}) can be written as 
\begin{gather}
H_{\nu}(z) = z - \phi_{\nu}(H_{\nu}(z)), \label{rel13}\\
H_{\mu}(z) = z - \phi_{(\mu, \nu)}(H_{\nu}(z)) \label{re14}.   
\end{gather}

A moment-cumulant formula for c-free independence is 
\begin{equation}
m_n(\mu) = \sum_{\pi \in \ncpn} \Big{(} \prod_{\substack{V \in \pi, \\ V: \text{~outer}}}R_{|V|}(\mu, \nu) \Big{)} \Big{(}  \prod_{\substack{V \in \pi, \\ V: \text{~inner}}}R_{|V|}(\nu) \Big{)}, 
\end{equation}
which generalizes the free and Boolean moment-cumulant formulae. 

The c-free convolution of  $(\mu_1, \nu_1)$ and $(\mu_2, \nu_2)$ is the pair 
$(\mu, \nu) = (\mu_1, \nu_1) \boxplus (\mu_2, \nu_2)$, where $\mu$ and $\nu$ are characterized by 
\begin{gather}
\phi_\nu(z) = \phi_{\nu_1}(z) + \phi_{\nu_2} (z), \\
\phi_{(\mu, \nu)}(z) = \phi_{(\mu_1, \nu_1)}(z) + \phi_{(\mu_2, \nu_2)}(z) \label{cum45}. 
\end{gather}
Let $(\mu_1 {}_{\nu_1}\!\boxplus_{\nu_2} \mu_2, \nu_1 \boxplus \nu_2 )$ denote the c-free convolution of $(\mu_1, \nu_1)$ and $(\mu_2, \nu_2)$.

\subsection{Technical facts}
We summarize the notation and several lemmata which will be used in Sections \ref{deform11}, \ref{Sec4} and \ref{Sec5}.     
Let $\pa$, $\pb$, $\pmo$, $\pc$, $\p+$ and $\ps$ be the set of probability measures, the set of probability measures with finite variance, the set of probability measures with finite moments of all orders,  the set of probability measures with compact supports, the set of probability measures on $[0, \infty)$ and the set of symmetric probability measures, respectively. 
The following lemma was proved in \cite{Maa}.
\begin{lemma}\label{lem2211}
A probability measure  $\mu $ belongs to $\pb$ if and only if $H_\mu $ has the 
representation
\begin{equation} \label{maa}
H_\mu (z) = a + z + \int_ {\real} \frac{1}{x-z}d\rho (x), 
\end{equation}
where $a \in \real$, $\rho$ a positive finite measure. $a$ and $\rho$ are determined uniquely. 
Furthermore, we have $\rho (\real) = \sigma ^2(\mu)$ and $a = -m(\mu)$, 
where $m(\mu)$ is the mean of $\mu$ and $\sigma ^2 (\mu)$ is the variance of $\mu$. 
\end{lemma}


We define $a(\mu):= \inf \{x \in \supp \mu \}$ and $b(\mu):= \sup \{x \in \supp \mu \}$. 
We note that  $-\infty \leq a(\mu) < \infty$ and $-\infty < b(\mu) \leq \infty$. The following lemmata \ref{estimate}-\ref{symm} were proved in \cite{Has2}.
\begin{lemma}\label{estimate}
Let $\nu$ and $\mu$ be probability measures. Then the following inequalities hold: \\
$(1)$ If $ \supp \nu \cap (-\infty, 0] \neq \emptyset $ and 
$ \supp \nu \cap [0, \infty) \neq \emptyset$, then  $a(\mu) \geq a(\nu \rhd \mu)$ and $b(\mu) \leq b(\nu \rhd \mu)$.  \\
$(2)$ If $ \supp \nu \subset (-\infty, 0]$, then $a(\mu) \geq a(\nu \rhd \mu)$ and  $b(\nu) + b(\mu) \leq b(\nu \rhd \mu)$.  \\
$(3)$ If $\supp \nu \subset [0, \infty)$, then $a(\nu) + a(\mu) \geq a(\nu \rhd \mu)$ and $b(\mu) \leq b(\nu \rhd \mu) $.  
\end{lemma}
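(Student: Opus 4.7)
The plan is to deduce all six inequalities from the factorization $H_{\nu\rhd\mu}=H_\nu\circ H_\mu$ of (\ref{mur}), via a boundary-value analysis on the real line.

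First I would collect the basic real-line properties of $H_\mu$: for any probability measure $\mu$, $H_\mu$ extends to a strictly increasing, real-analytic diffeomorphism from $(b(\mu),\infty)$ onto an interval $(h_+,\infty)$ with $h_+=h_+(\mu)\geq 0$, and from $(-\infty,a(\mu))$ onto $(-\infty,h_-)$ with $h_-=h_-(\mu)\leq 0$. The elementary bounds $G_\mu(x)\leq 1/(x-b(\mu))$ for $x>b(\mu)$ and $G_\mu(x)\geq 1/(x-a(\mu))$ for $x<a(\mu)$ reciprocate into
\begin{equation*}
H_\mu(x)\geq x-b(\mu)\quad (x>b(\mu)),\qquad H_\mu(x)\leq x-a(\mu)\quad (x<a(\mu)),
\end{equation*}
which invert to $H_\mu^{-1}(y)\leq y+b(\mu)$ on $(h_+,\infty)$ and $H_\mu^{-1}(y)\geq y+a(\mu)$ on $(-\infty,h_-)$.

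Since $G_{\nu\rhd\mu}(z)=G_\nu(H_\mu(z))$, a boundary-value argument yields the support identity
\begin{equation*}
\supp(\nu\rhd\mu)\cap(b(\mu),\infty)=H_\mu^{-1}\bigl(\supp\nu\cap(h_+,\infty)\bigr),
\end{equation*}
together with the symmetric identity on $(-\infty,a(\mu))$. Each of the six inequalities then follows by a short case analysis keyed to the sign hypothesis on $\supp\nu$. For the single-endpoint inequalities $a(\mu)\geq a(\nu\rhd\mu)$ and $b(\mu)\leq b(\nu\rhd\mu)$: if the hypothesis forces $\supp\nu$ to meet the relevant half-line $(h_+,\infty)$ or $(-\infty,h_-)$, the preimage of an endpoint of $\supp\nu$ under $H_\mu^{-1}$ supplies a support point of $\nu\rhd\mu$ strictly outside $[a(\mu),b(\mu)]$; otherwise the corresponding half-line of $\supp(\nu\rhd\mu)$ is empty and the inequality follows from the endpoint inclusion $b(\mu)\in\supp(\nu\rhd\mu)$ or $a(\mu)\in\supp(\nu\rhd\mu)$. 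For the sum bounds in cases (2) and (3), the numerical estimate $H_\mu^{-1}(y)\geq y+a(\mu)$ applied to $y=b(\nu)$ (case (2)) and $H_\mu^{-1}(y)\leq y+b(\mu)$ applied to $y=a(\nu)$ (case (3)) supply support points at the required level on the appropriate half-line, and combined with the endpoint inclusion cover all configurations.

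The main obstacle is establishing the endpoint inclusions $a(\mu),b(\mu)\in\supp(\nu\rhd\mu)$, which do not follow from the support identity and require a direct non-tangential boundary analysis of $G_\nu\circ H_\mu$ at the endpoint. Three sub-situations have to be treated separately: when $\mu$ has positive absolutely continuous density near $b(\mu)$, the nonzero imaginary part of $H_\mu(b(\mu)+i0)$ survives composition with $G_\nu$ and produces positive density of $\nu\rhd\mu$ near $b(\mu)$; when $\mu$ has an atom at $b(\mu)$, $H_\mu$ extends with $H_\mu(b(\mu))=0$ and the argument reduces to analysing $H_\nu(0)$ under the given sign hypothesis; and the degenerate case $\mu=\delta_{b(\mu)}$ (so that $a(\mu)=b(\mu)$) reduces $\nu\rhd\mu$ to the translate $\nu\ast\delta_{b(\mu)}$, for which the sum bounds are immediate. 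Once this endpoint analysis is in place, combining it with the support identity and the numerical bounds yields all six inequalities.
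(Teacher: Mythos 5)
A preliminary remark: the paper does not actually prove this lemma --- it is quoted from \cite{Has2} --- so there is no in-paper proof to compare against, and your proposal has to be judged on its own terms. The starting point is right ($G_{\nu\rhd\mu}=G_\nu\circ H_\mu$ plus a boundary analysis), and the preliminary facts you list (monotonicity of $H_\mu$ on the two unbounded components of $\real\setminus\supp\mu$, the bounds $H_\mu(x)\geq x-b(\mu)$ for $x>b(\mu)$ and $H_\mu(x)\leq x-a(\mu)$ for $x<a(\mu)$, and the support identity on those components) are correct. But there is a genuine gap: the support points that witness the sum bounds in (2) and (3) generically lie strictly inside $(a(\mu),b(\mu))$, on interior components of $\real\setminus\supp\mu$ that your toolkit never examines. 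Take $\mu=\frac{1}{2}(\delta_0+\delta_1)$ and $\nu=\delta_{-1}$ (case (2)). Then $H_\mu(z)=\frac{2z(z-1)}{2z-1}$ and $G_{\nu\rhd\mu}(z)=\frac{1}{H_\mu(z)+1}=\frac{2z-1}{2z^2-1}$, so $\nu\rhd\mu$ consists of two atoms at $\pm\frac{1}{\sqrt{2}}$. The inequality $b(\nu)+b(\mu)=0\leq b(\nu\rhd\mu)$ is witnessed only by the atom at $+\frac{1}{\sqrt{2}}$, which is the preimage of $b(\nu)=-1$ under $H_\mu$ restricted to $(\frac{1}{2},1)$, a component of $\{G_\mu<0\}$ inside the convex hull of $\supp\mu$. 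Your support identity on $(-\infty,a(\mu))\cup(b(\mu),\infty)$ produces only the atom at $-\frac{1}{\sqrt{2}}$, which gives the useless bound $b(\nu\rhd\mu)\geq-\frac{1}{\sqrt{2}}$; and both of your fallback endpoint inclusions fail here, since neither $0=a(\mu)$ nor $1=b(\mu)$ belongs to $\supp(\nu\rhd\mu)=\{\pm\frac{1}{\sqrt{2}}\}$. The same phenomenon occurs for the sum bound of (3): with $\mu=\frac{1}{2}(\delta_{-1}+\delta_1)$ and $\nu=\delta_c$, $c>0$, the witness $\frac{c-\sqrt{c^2+4}}{2}$ for $a(\nu\rhd\mu)\leq c-1$ lies in the gap $(-1,1)$.

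To close these cases you would need the preimage analysis on every component of $\real\setminus\supp\mu$, together with the estimates $H_\mu(x)\leq x-b(\mu)$ on the component of $\{G_\mu<0\}$ adjacent to $b(\mu)$ from the left and $H_\mu(x)\geq x-a(\mu)$ on the component of $\{G_\mu>0\}$ adjacent to $a(\mu)$ from the right; these point in the opposite direction from the outer-branch bounds you prove, hold only on those adjacent components, and require separate treatment when the adjacent component does not reach the level $b(\nu)$ resp.\ $a(\nu)$. In addition, your three sub-situations for the endpoint analysis (positive a.c.\ density at the edge, atom at the edge, point mass) are not exhaustive: they miss, for instance, $\mu$ whose atoms accumulate at $b(\mu)$, or a purely singular continuous part near $b(\mu)$, and in those configurations one again has to locate support points of $\nu\rhd\mu$ in the interior gaps of $\supp\mu$ accumulating at the endpoint. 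So the skeleton of the argument is reasonable, but as written it does not prove the sum bounds in (2) and (3), and the endpoint analysis is incomplete.
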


\begin{lemma}\label{positive1} 
We use the notation in (\ref{nev}). For $\mu \in \pa$, the condition $\mu \in \p+$ is equivalent to $\supp \eta \subset [0, \infty)$ and $H_{\mu}(-0) \leq 0$. 
Moreover, if $\supp \eta \subset [0, \infty)$, the condition $H_{\mu}(-0) \leq 0$ is equivalent to 
\[\eta(\{0 \}) = 0, ~~~\int_0 ^{\infty}\frac{1}{x}d\eta (x) < \infty,~~~b + \int_0 ^{\infty}\frac{1}{x}d\eta (x) \leq 0. 
\]
\end{lemma}

\begin{lemma}\label{subordinator}
Let $\{\mu_t \}_{t \geq 0}$ be a weakly continuous $\rhd$-convolution semigroup 
with $\mu_0 = \delta_0$. Then the following statements are equivalent: 
\begin{itemize}
\item[$(1)$] there exists $t_0 > 0$ such that $\supp\mu_{t_0} \subset [0, \infty)$;
\item[$(2)$] $\supp \mu_t \subset [0, \infty)$ for all $0 \leq t < \infty$;
\end{itemize}
\end{lemma}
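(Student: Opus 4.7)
The implication $(2) \Rightarrow (1)$ is trivial. For $(1) \Rightarrow (2)$ my plan is to combine (i) a stability lemma for $\p+$ under $\rhd$, (ii) an iteration argument showing $\mu_{t_0/n} \in \p+$ for every $n$, and (iii) weak continuity to promote the conclusion to all $t \geq 0$.

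For (i), I would verify that $\p+$ is closed under $\rhd$: if $\mu,\nu \in \p+$, then $\nu \rhd \mu \in \p+$. Indeed, $H_{\nu \rhd \mu} = H_\nu \circ H_\mu$ by (\ref{mur}); for any $\mu \in \p+$ the formula $G_\mu(x) = \int(x-y)^{-1}d\mu(y)$ shows that $H_\mu$ extends analytically across $(-\infty, 0)$, taking values in $(-\infty, 0)$ there. Hence $H_\nu \circ H_\mu$ is analytic on $\comp \setminus [0, \infty)$ and satisfies $H_{\nu \rhd \mu}(-0) \leq 0$, so Lemma \ref{positive1} yields $\nu \rhd \mu \in \p+$.

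For (ii), suppose for contradiction $a(\mu_{t_0/n}) < 0$, write $\mu := \mu_{t_0/n}$ and $\mu_k := \mu^{\rhd k}$, so that $\mu_n = \mu_{t_0}$. The inequality $a(\mu_{k+1}) = a(\mu \rhd \mu_k) \leq a(\mu_k)$ is supplied by Lemma \ref{estimate}(1) when $\supp \mu$ meets both half-lines, and by Lemma \ref{estimate}(2) when $\supp \mu \subset (-\infty, 0]$; one of these cases must occur because $a(\mu) < 0$. Iterating gives $a(\mu_{t_0}) = a(\mu_n) \leq a(\mu_1) = a(\mu) < 0$, contradicting $\supp \mu_{t_0} \subset [0, \infty)$.

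Combining (i) and (ii) via the semigroup law $\mu_{k t_0 /n} = \mu_{t_0/n}^{\rhd k}$ places $\mu_t$ in $\p+$ for every $t$ in the dense subset $\{k t_0/n : k \geq 0,\ n \geq 1\}$ of $[0, \infty)$; weak continuity of $t \mapsto \mu_t$ together with the weak closedness of $\p+$ then extends this to all $t \geq 0$. The only genuinely delicate point is the iteration in (ii), where the case analysis on $\supp \mu$ must be performed carefully so that the correct part of Lemma \ref{estimate} applies; once that is done, (i) and (iii) are routine analytic bookkeeping.
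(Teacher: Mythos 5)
Your argument is correct, but note that the paper itself gives no proof of this lemma: it is one of the facts quoted from the reference \cite{Has2} (``The following lemmata \ref{estimate}--\ref{symm} have been proved in \cite{Has2}''), so there is no in-paper proof to compare against. Judged on its own, your three-step scheme is sound and complete. Step (i), the closure of $\mathcal{P}_+$ under $\rhd$, is exactly the property the introduction attributes to the operator-theoretic approach of \cite{Fra3}; your complex-analytic derivation via Lemma \ref{positive1} is a legitimate alternative, since for $\mu\in\mathcal{P}_+$ one indeed has $H_\mu$ analytic on $\comp\setminus[0,\infty)$ with $H_\mu((-\infty,0))\subset(-\infty,0)$ and $H_\mu(-0)\le 0$, and monotonicity of $H_\nu$ on $(-\infty,0)$ then gives $H_\nu(H_\mu(-0))\le H_\nu(-0)\le 0$. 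Step (ii) applies Lemma \ref{estimate} with the correct orientation: in $\mu\rhd\mu_k$ the hypotheses of parts (1) and (2) concern the left factor $\mu=\mu_{t_0/n}$, whose support does meet $(-\infty,0]$ under the contradiction hypothesis $a(\mu)<0$, and both applicable cases yield $a(\mu\rhd\mu_k)\le a(\mu_k)$, so $a(\mu_{t_0})\le a(\mu_{t_0/n})<0$, the desired contradiction. Step (iii) is the routine combination of the semigroup law $\mu_{kt_0/n}=\mu_{t_0/n}^{\rhd k}$, density of $t_0\mathbb{Q}_{\ge 0}$ in $[0,\infty)$, weak continuity, and the weak closedness of $\mathcal{P}_+$ from Lemma \ref{closed1}. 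I see no gap.
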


\begin{lemma}\label{symm}
We assume that the support of each $\mu_t$ is compact (or equivalently, the support of $\mu_t$ is compact for some $t>0$). 
Then the following statements are equivalent. 
\begin{itemize}
\item[$(1)$] There exists $t_0 > 0$ such that $\mu_{t_0}$ is symmetric.
\item[$(2)$] $\mu_{t}$ is symmetric for all $t > 0$.
\end{itemize}
\end{lemma}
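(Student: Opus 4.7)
I would prove this by a reflection argument. Define $\tilde\mu$ to be the push-forward of $\mu$ under $x \mapsto -x$. From the integral representation of $G_\mu$ (extended to $\comp \setminus \real$ via Schwarz reflection), a direct computation yields
\[
H_{\tilde\mu}(z) = -H_\mu(-z), \qquad z \in \comp \setminus \real,
\]
so $\mu$ is symmetric if and only if $\mu = \tilde\mu$, equivalently, $H_\mu$ is an odd function on $\comp \setminus \real$. Using (\ref{mur}), one then checks that reflection intertwines with the monotone convolution:
\[
H_{\widetilde{\mu \rhd \nu}}(z) = -H_{\mu \rhd \nu}(-z) = -H_\mu(H_\nu(-z)) = -H_\mu(-H_{\tilde\nu}(z)) = H_{\tilde\mu}(H_{\tilde\nu}(z)),
\]
so $\widetilde{\mu \rhd \nu} = \tilde\mu \rhd \tilde\nu$. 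Since push-forward under negation preserves compact support and weak continuity, the family $\{\tilde\mu_t\}_{t \geq 0}$ is again a weakly continuous $\rhd$-convolution semigroup with $\tilde\mu_0 = \delta_0$ and compactly supported members.

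Under hypothesis (1), $\mu_{t_0} = \tilde\mu_{t_0}$, so conclusion (2) reduces to the claim $\mu_t = \tilde\mu_t$ for all $t > 0$. I would deduce this from uniqueness of the weakly continuous $\rhd$-convolution semigroup embedding of a $\rhd$-infinitely divisible measure: after rescaling time so that $t_0 = 1$, both $\{\mu_t\}$ and $\{\tilde\mu_t\}$ are weakly continuous $\rhd$-semigroups starting at $\delta_0$ whose value at time $1$ equals the common measure $\mu_{t_0}$, so uniqueness forces the two semigroups to coincide at every $t$. The reverse implication $(2) \Rightarrow (1)$ is trivial.

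The main obstacle is the uniqueness step itself, which does not follow from the semigroup law alone. I would justify it via the infinitesimal generator: writing $F_t := H_{\mu_t}$, the semigroup satisfies the Loewner-type equation $\partial_t F_t(z) = A(F_t(z))$ with $F_0 = \mathrm{id}$, where $A$ is a Nevanlinna function on $\com+$. The generator $A$ can be recovered from $F_{t_0}$ through $A(z) = \lim_{n \to \infty} (n/t_0)(F_{t_0/n}(z) - z)$, coupled with unique solvability of the compositional $n$-th root relation $F_{t_0/n}^{\circ n} = F_{t_0}$ within the class of reciprocal Cauchy transforms of $\rhd$-infinitely divisible measures. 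Standard ODE uniqueness for the Loewner equation then propagates the coincidence at $t_0$ to the whole trajectory, completing the argument.
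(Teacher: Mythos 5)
The paper does not actually prove this lemma: it is one of the Lemmata \ref{estimate}--\ref{symm} quoted from \cite{Has2}, so there is no in-paper proof to compare against and your argument has to stand on its own. Its first half does. The computation $H_{\tilde\mu}(z)=-H_\mu(-z)$ for the reflection $\tilde\mu$ of $\mu$, the characterization of symmetry as oddness of $H_\mu$, and the intertwining $\widetilde{\mu\rhd\nu}=\tilde\mu\rhd\tilde\nu$ are all correct, and they correctly reduce the lemma to the statement that two weakly continuous, compactly supported $\rhd$-semigroups issuing from $\delta_0$ which agree at one time $t_0>0$ agree for all $t$.

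The gap is in how you justify that uniqueness. Compositional $n$-th roots of an analytic self-map of $\com+$ are not unique in general, and your appeal to ``unique solvability of the compositional $n$-th root relation $F_{t_0/n}^{\circ n}=F_{t_0}$ within the class of reciprocal Cauchy transforms of $\rhd$-infinitely divisible measures'' asserts exactly the nontrivial fact rather than proving it; recovering the generator by $A(z)=\lim_n (n/t_0)(F_{t_0/n}(z)-z)$ presupposes that $F_{t_0/n}$ is already determined by $F_{t_0}$. Note also that nowhere in your argument does the compact-support hypothesis get used, which is a warning sign: this is precisely where it must enter. The step closes as follows. Since every $\mu_t\in\pc$ is determined by its moments and the monotone cumulants satisfy $r_k(\lambda^{\rhd n})=n\,r_k(\lambda)$, any compactly supported $n$-th $\rhd$-root $\lambda$ of $\mu_{t_0}$ has $r_k(\lambda)=\tfrac{1}{n}r_k(\mu_{t_0})$, hence prescribed moments, hence is unique (this is the monotone specialization of the root-uniqueness argument in Corollary \ref{compact1}'s sequel in Section \ref{Inf4}). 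Applied to $\mu_{t_0/n}$ and $\tilde\mu_{t_0/n}$ and then to their powers, this gives $\mu_{qt_0}=\tilde\mu_{qt_0}$ for every positive rational $q$, and weak continuity together with the weak closedness of $\ps$ (Lemma \ref{closed1}) finishes the proof. Alternatively, one can avoid the reflection of the whole semigroup: homogeneity $r_k(D_\lambda\mu)=\lambda^k r_k(\mu)$ with $\lambda=-1$ shows that a compactly supported measure is symmetric iff all its odd monotone cumulants vanish, and the relation $r_k(\mu_{qt_0})=q\,r_k(\mu_{t_0})$ transfers the vanishing from $t_0$ to all rational multiples, then to all $t$ by continuity; this is shorter and makes the role of compact support transparent.
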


\begin{lemma}\label{closed1}
$\p+$ and $\ps$ are closed subsets of $\pa$ under the weak topology.  
\end{lemma}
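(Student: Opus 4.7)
The plan is to verify each claim directly from the definition of weak convergence of probability measures, since both properties ($\p+$-ness and symmetry) are easily captured by testing against suitable bounded continuous functions.

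For $\p+$, suppose $\mu_n \in \p+$ and $\mu_n \to \mu$ weakly. I would use the Portmanteau theorem: since $U:=(-\infty,0)$ is open, one has $\mu(U) \leq \liminf_n \mu_n(U) = 0$, so $\mu((-\infty,0)) = 0$, which means $\supp \mu \subset [0,\infty)$. Alternatively, for each $\epsilon > 0$ pick a bounded continuous function $f_\epsilon$ with $0 \leq f_\epsilon \leq 1$, $f_\epsilon \equiv 1$ on $(-\infty,-\epsilon]$ and $f_\epsilon \equiv 0$ on $[0,\infty)$; then $\int f_\epsilon\,d\mu_n = 0$ for all $n$, hence $\int f_\epsilon\,d\mu = 0$, and letting $\epsilon \downarrow 0$ yields $\mu((-\infty,0)) = 0$.

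For $\ps$, suppose $\mu_n \in \ps$ and $\mu_n \to \mu$ weakly. The symmetry of each $\mu_n$ means $\int f(x)\,d\mu_n(x) = \int f(-x)\,d\mu_n(x)$ for every bounded continuous $f$ on $\real$. Since $x \mapsto f(-x)$ is also bounded and continuous, passing to the limit on both sides gives
\begin{equation*}
\int_\real f(x)\,d\mu(x) = \int_\real f(-x)\,d\mu(x)
\end{equation*}
for every bounded continuous $f$, which is the defining condition for $\mu \in \ps$.

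There is essentially no real obstacle here; both arguments are routine applications of weak convergence. The only mild care needed is in the $\p+$ case to test against an open set rather than the closed half-line $[0,\infty)$ (using Portmanteau's inequality for open sets, not for closed sets), or equivalently to approximate the indicator of $(-\infty,-\epsilon]$ by bounded continuous functions vanishing on $[0,\infty)$.
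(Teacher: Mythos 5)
Your proof is correct and follows essentially the same route as the paper: the Portmanteau inequality for the open set $(-\infty,0)$ handles $\p+$, and testing symmetry against bounded continuous functions (your identity $\int f(x)\,d\mu_n = \int f(-x)\,d\mu_n$ is equivalent to the paper's vanishing condition on odd functions) handles $\ps$. No gaps.
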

\begin{proof}
Let $\{\mu_n \} \subset \p+$ be a sequence converging to $\mu \in \pa$. 
The weak convergence implies that $\mu((-\infty, 0)) \leq \liminf \mu_n ((-\infty, 0)) = 0$; therefore, $\mu \in \p+$. 

For a probability measure $\nu$, $\nu \in \ps$ is equivalent to the condition 
\begin{equation}
\int_{\real} g(x)d\nu(x) = 0 \text{~for all~} g \in C_b(\real),~ g(-x) = -g(x), \end{equation}
where $C_b(\real)$ is the set of bounded continuous functions on $\real$. 
This equivalence can be proved with a simple approximation argument. 
Then the conclusion is not difficult. 
\end{proof}

\section{Conditionally monotone independence} \label{Condi}
It is known that the free, Boolean and monotone products of states (denoted by $\ast, \diamond$ and $\rhd$, respectively) can be expressed in terms of the c-free product \cite{BLS,Fra}, as follows. We consider triples of algebras and states $(\mathcal{A}_1, \varphi_1, \psi_1)$ and $(\mathcal{A}_2, \varphi_2, \psi_2)$. 
We assume that $\mathcal{A}_i$ has a decomposition 
\begin{equation}\label{assumption11}
\mathcal{A}_i = \comp 1 \oplus \mathcal{A}_i ^0
\end{equation} 
with a subalgebra $\mathcal{A}_i ^0$ $(i = 1, 2)$. Then the delta state $\delta_i$ $(i = 1, 2)$ is defined by $\delta_i(\lambda 1 + a^0) = \lambda$ for $\lambda \in \comp$ and $a^0 \in \mathcal{A}_i ^0$. $\delta_i$ is a homomorphism from $\mathcal{A}_i$ to $\comp$. Conversely, if there exists a homomorphism from $\mathcal{A}_i$ to $\comp$, then $\mathcal{A}_i ^0$ can be defined to be its kernel. 

We have the following relations.  
\begin{gather}
(\varphi, \varphi) \ast (\psi, \psi) = (\varphi \ast \psi, \varphi \ast \psi) \text{~on~} \mathcal{A}_1  \ast \mathcal{A}_2, \label{free2} \\
(\varphi, \delta_1) \ast (\psi, \delta_2) = (\varphi \diamond \psi, \delta_1 \ast \delta_2) \text{~on~} \mathcal{A}_1 ^0  \ast \mathcal{A}_2 ^0,  \label{boolean2}\\
(\varphi, \delta_1) \ast (\psi, \psi) = (\varphi \rhd \psi, \psi) \text{~on~} \mathcal{A}_1 ^0 \ast \mathcal{A}_2 \label{monotone2}. 
\end{gather}
In terms of additive convolutions of (compactly supported) probability measures, the equalities (\ref{free2})-(\ref{monotone2}) can be written as 
\begin{gather}
(\mu, \mu) \boxplus (\nu, \nu) = (\mu \boxplus \nu, \mu \boxplus \nu), \label{free1}\\
(\mu, \delta_0) \boxplus (\nu, \delta_0) = (\mu \uplus \nu, \delta_0), \label{boolean1}\\
(\mu, \delta_0) \boxplus (\nu, \nu) = (\mu \rhd \nu, \nu). \label{monotone1}
\end{gather}
We can understand the associative laws of the Boolean and free convolutions from (\ref{free1}) and (\ref{boolean1}) since the c-free convolution is associative. The associative law of the monotone convolution is, however, not easy to understand 
from (\ref{monotone1}) since we cannot repeat  (\ref{monotone1}) more than twice. We note here that the associative law of the monotone convolution was proved rigorously first in \cite{Fra4}. 

We show that the monotone convolution for the second component solves this problem.   We follow the setting of \cite{BFGKT} on unitization.
\begin{definition}\label{c-monotoneprod1}
(1) Let $(\mathcal{A}_1, \varphi_1, \psi_1)$ and $(\mathcal{A}_2, \varphi_2, \psi_2)$ be triples consisting of algebras 
and two linear functionals and let $\mathcal{A}_1 \ast_{nu} \mathcal{A}_2$ be the free product without the identification of units. We define a c-monotone product 
\begin{equation}
(\mathcal{A}_1, \varphi_1, \psi_1) \rhd (\mathcal{A}_2, \varphi_2, \psi_2) := (\mathcal{A}_1 \ast_{nu} \mathcal{A}_2, \varphi_1 \rhd_{\psi_2} \varphi_2, \psi_1 \rhd \psi_2 ), 
\end{equation}
by setting $\varphi_1 \rhd_{\psi_2} \varphi_2$ as follows. 
Denote the unitization of each $\mathcal{A}_i$ by $\widetilde{\mathcal{A}_i}:=\comp 1_{\widetilde{\mathcal{A}_i}} \oplus \mathcal{A}_i$. Then the linear functionals $\varphi_i$ and $\psi_i$ are naturally extended 
to $\widetilde{\varphi_i}$ and $\widetilde{\psi_i}$ on $\widetilde{\mathcal{A}_i}$ by  $\widetilde{\varphi_i}(1_{\widetilde{\mathcal{A}_i}}) = 1$ and $\widetilde{\psi_i}(1_{\widetilde{\mathcal{A}_i}}) = 1$.  
There is a natural isomorphism $\widetilde{\mathcal{A}_1 \ast_{nu} \mathcal{A}_2} \cong \widetilde{\mathcal{A}_1} \ast \widetilde{\mathcal{A}_2}$. 
We let $\widetilde{\delta_1}$ be the delta state associated to the decomposition $\widetilde{\mathcal{A}_1} = \comp 1 \oplus \mathcal{A}_1$.  We define $\varphi_1 \rhd_{\psi_2} \varphi_2$ to be the restriction of $\widetilde{\varphi_1}  {}_{\widetilde{\delta_1}}\!\!\ast_{\widetilde{\psi_2}} \widetilde{\varphi_2}$ on $\mathcal{A}_1 \ast_{nu} \mathcal{A}_2$. \\
(2) For pairs of probability measures $(\mu_1, \nu_1)$ and $(\mu_2, \nu_2)$, 
we define an additive c-monotone convolution $(\mu_1, \nu_1) \rhd (\mu_2, \nu_2) := (\mu_1 {}_{\delta_0}\!\!\boxplus_{\nu_2} \mu_2, \nu_1 \rhd \nu_2)$. 
We write $\mu_1 \rhd_{\nu_2} \mu_2$ for $\mu_1 {}_{\delta_0}\!\boxplus_{\nu_2}\mu_2$.  
\end{definition}
\begin{remark}  
We defined the c-monotone product on the free product without the identification of units. Considering the context of category theory \cite{BFGKT}, it is natural to treat products of linear functionals. This is why we considered linear functionals instead of states. The c-monotone product, however, preserves the positivity of linear functionals since it is defined to be the restriction of the c-free product. 
\end{remark}
\begin{proposition}\label{asso111}
$\mu_1 \rhd_{\nu_2} \mu_2$ is characterized by 
\begin{equation}
H_{\mu_1 \rhd_{\nu_2} \mu_2}=H_{\mu_1}\circ H_{\nu_2} + H_{\mu_2} - H_{\nu_2}. 
\end{equation}
\end{proposition}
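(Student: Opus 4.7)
The plan is to unwind the definition $\mu_1 \rhd_{\nu_2} \mu_2 := \mu_1 {}_{\delta_0}\!\boxplus_{\nu_2} \mu_2$ and compute the reciprocal Cauchy transform by running the c-free machinery (\ref{rel13})--(\ref{cum45}) on the two pairs $(\mu_1, \delta_0)$ and $(\mu_2, \nu_2)$. The strategy is to express the relevant partial $\phi$-transforms in terms of $H$-transforms, add them according to (\ref{cum45}), and convert back.

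First I would dispose of the auxiliary data attached to $\delta_0$. Since $H_{\delta_0}(z) = z$, the functional equation (\ref{rel13}) forces $\phi_{\delta_0} \equiv 0$, and therefore $\delta_0 \boxplus \nu_2 = \nu_2$. This confirms that the second component in Definition \ref{c-monotoneprod1} reads $\nu_1 \rhd \nu_2 = \delta_0 \rhd \nu_2 = \nu_2$ (using (\ref{mur}) with $H_{\delta_0}(z)=z$), and identifies $\nu_2$ as the measure that plays the role of $\nu$ in (\ref{re14}) for the c-free convolution.

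Next I would solve (\ref{re14}) for the two input pairs. For $(\mu_1, \delta_0)$, substituting $H_{\delta_0}(z)=z$ gives $\phi_{(\mu_1, \delta_0)}(w) = w - H_{\mu_1}(w)$. For $(\mu_2, \nu_2)$, evaluating at $w = H_{\nu_2}(z)$ yields $\phi_{(\mu_2,\nu_2)}(H_{\nu_2}(z)) = z - H_{\mu_2}(z)$. The c-free cumulant addition (\ref{cum45}) then gives
\begin{equation*}
\phi_{(\mu_1 \rhd_{\nu_2}\mu_2,\,\nu_2)}(H_{\nu_2}(z)) = \bigl(H_{\nu_2}(z) - H_{\mu_1}(H_{\nu_2}(z))\bigr) + \bigl(z - H_{\mu_2}(z)\bigr),
\end{equation*}
and plugging back into (\ref{re14}) for the composed pair produces $H_{\mu_1 \rhd_{\nu_2}\mu_2}(z) = z - \phi_{(\mu_1 \rhd_{\nu_2}\mu_2,\,\nu_2)}(H_{\nu_2}(z))$, which simplifies after the two $z$-terms cancel to the claimed identity.

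The only mild technical point is that (\ref{rel1})--(\ref{re14}) were introduced for compactly supported measures through formal power series in $1/z$. I would either observe that the resulting identity, being an equality of functions analytic on $\com+$, extends by the identity theorem to all pairs for which the Nevanlinna version of the c-free convolution makes sense, or simply phrase the argument at the level of the (Nevanlinna-type) formal power series expansions at $\infty$ granted by (\ref{nev}). No obstacle of substance arises beyond the bookkeeping.
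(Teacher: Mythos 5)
Your proof is correct and follows essentially the same route as the paper's: both solve the c-free relation (\ref{re14}) for $\phi_{(\mu_1,\delta_0)}$ and $\phi_{(\mu_2,\nu_2)}$ in terms of reciprocal Cauchy transforms, add them via (\ref{cum45}), and substitute back at $w = H_{\nu_2}(z)$. The paper's version is just a more terse rendering of the same computation, so no further comparison is needed.
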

\begin{proof}
We immediately obtain the equality $H_{\mu_1} \circ H_{\nu_2}(z) = H_{\nu_2}(z) - \phi_{(\mu_1, \delta_0)} (H_{\nu_2}(z))$ from (\ref{re14}). 
Since a c-free convolution is characterized by the sum of $\phi_{(,)}$, we have 
$H_{\mu_1} \circ H_{\nu_2}(z) - H_{\nu_2} (z) + H_{\mu_2}(z) = z - \phi_{(\mu_1 \rhd_{\nu_2} \mu_2, \nu_2)} \circ H_{\nu_2}(z)$. 
\end{proof}
\begin{remark}
For any probability measures $\mu_i,~ \nu_i$ ($i =1,~2$), we can prove that 
\begin{itemize}
\item[(1)] $H_{\mu_1}\circ H_{\nu_2} + H_{\mu_2} - H_{\nu_2}$ is an analytic map from $\com+$ to $\com+$, 
\item[(2)] $\inf_{z \in \com+} \frac{\im(H_{\mu_1}\circ H_{\nu_2}(z) + H_{\mu_2}(z) - H_{\nu_2} (z))}{\im~ z}  = 1$. 
\end{itemize}
Therefore, the definition of the c-monotone convolution of compactly supported probability measures can be extended to arbitrary probability measures \cite{Maa}.   
\end{remark}

We can easily check with Proposition \ref{asso111} that the c-monotone convolution of probability measures is associative, i.e., 
$\big{(}(\mu_1, \nu_1) \rhd (\mu_2, \nu_2) \big{)} \rhd (\mu_3, \nu_3) = (\mu_1, \nu_1) \rhd \big{(}(\mu_2, \nu_2)  \rhd (\mu_3, \nu_3)\big{)}$. Therefore, the c-monotone product of pairs of linear functionals is also expected to be associative. To prove this, we need to know how to compute mixed moments under the c-monotone product of linear functionals. We use the following notation: for a linearly ordered index set $I = \{i_1, i_2, \cdots, i_n \}$ with $i_1 < i_2 < \cdots < i_n$, we set 
\[\overrightarrow{\prod_{i \in I}} a_i := a_{i_1}a_{i_2} \cdots a_{i_n}. \]  

\begin{lemma}\label{formula11} Let $x_j$ and $y_k$ be (possibly non-commutative) elements in an algebra over $\comp$ with unit 1 and let $p_j \in \comp$. Then the following identity holds: 
\begin{equation}\label{eq98}
x_1 y_1 x_2 \cdots y_{n-1} x_n = \sum_{S \subset \{1, \cdots, n-1 \}} \Big{(} \prod_{j \notin S} p_j \Big{)} 
\Big{(} x_{S_1}(y_{k_1} -  p_{k_1}1)x_{S_2}(y_{k_2} -  p_{k_2}1) \cdots (y_{k_m} -  p_{k_m}1)x_{S_{m+1}} \Big{)},     
\end{equation}
 where $x_{S_j}:= \overrightarrow{\prod}_{k \in S_j} x_k$. $\{ S_j \}$ is a partition of $\{1, \cdots, n \}$ determined by $S$ as follows: if $S = \{k_1, \cdots, k_m \}$ with $1 \leq k_1 < \cdots < k_m \leq n-1$, then $S_j := \{k_{j-1} + 1, \cdots, k_j \}$ for $1 \leq j \leq m+1$, where $k_0 := 0$ and $k_{m+1}:= n$.  
If $S = \emptyset$, then $S_1 = \{1, \cdots, n \}$. A product over the empty set is defined to be $1$. 
\end{lemma}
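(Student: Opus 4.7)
The plan is to prove the identity by a straightforward ``center-and-expand'' computation: write each $y_j$ as $(y_j - p_j 1) + p_j 1$, substitute these decompositions into the left-hand side, and expand by distributivity. Since each $y_j$ contributes two alternatives, the resulting expansion is naturally indexed by the subset $S \subset \{1,\dots,n-1\}$ recording the positions at which one selects the centered term $(y_j - p_j 1)$ rather than the scalar term $p_j 1$.

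Once the expansion is written out, the rest is bookkeeping. For a fixed $S = \{k_1 < \cdots < k_m\}$, the contribution is
\begin{equation*}
x_1(p_1 1) x_2 \cdots (y_{k_1} - p_{k_1}1) \cdots (y_{k_m} - p_{k_m}1) \cdots x_n,
\end{equation*}
with $(y_j - p_j 1)$ appearing at positions $j \in S$ and $p_j 1$ appearing at positions $j \notin S$. The scalar factors $p_j 1$ (for $j \notin S$) commute with everything and can be pulled out as $\prod_{j \notin S} p_j$. What remains between consecutive centered factors $(y_{k_{j-1}} - p_{k_{j-1}}1)$ and $(y_{k_j} - p_{k_j}1)$ is precisely the block of $x$'s with indices in $S_j = \{k_{j-1}+1,\dots,k_j\}$, which is $x_{S_j}$ by definition; at the ends we get $x_{S_1}$ and $x_{S_{m+1}}$ with the convention $k_0 = 0$, $k_{m+1} = n$. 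Summing over all $S$ yields exactly the right-hand side of (\ref{eq98}).

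There is no real obstacle here; the only care required is verifying that the indexing of the groups $S_j$ matches the positions of the $x$'s left between consecutive centered $y$'s, and that the boundary cases ($S = \emptyset$, or $1 \in S$, or $n-1 \in S$) fit the convention that an empty product equals $1$ and that $S_1 = \{1,\dots,n\}$ when $S = \emptyset$. Alternatively, one could give a short induction on $n$: the case $n=1$ is trivial, and writing $y_{n-1} = (y_{n-1} - p_{n-1}1) + p_{n-1}1$ and applying the induction hypothesis to the prefix $x_1 y_1 \cdots y_{n-2} x_{n-1}$ reproduces the two ways of extending a subset $S'\subset\{1,\dots,n-2\}$ to a subset of $\{1,\dots,n-1\}$. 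Either route is essentially mechanical.
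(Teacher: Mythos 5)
Your proposal is correct, and the center-and-expand computation (substituting $y_j = (y_j - p_j 1) + p_j 1$ and expanding) together with your alternative induction on $n$ covers exactly what the paper does — its proof is the single sentence that the identity follows easily by induction on $n$. The bookkeeping you describe for the blocks $S_j$ and the boundary conventions is the only content, and you have it right.
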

\begin{proof}
(\ref{eq98}) can be proved easily by induction on the number $n$. 
\end{proof}
\begin{theorem} \label{rules1}
Let $(\mathcal{A}_1, \varphi_1, \psi_1)$ and $(\mathcal{A}_2, \varphi_2, \psi_2)$ be triples consisting of algebras 
and two linear functionals. 
The calculation rule for mixed moments under $\varphi_1 \rhd_{\psi_2} \varphi_2$ is what follows.  
\begin{itemize}
\item[$(1)$] $\varphi_1 \rhd_{\psi_2} \varphi_2 (b a x) = \varphi_2 (b) \varphi_1  \rhd_{\psi_2} \varphi_2(a x)  \text{~for~} a \in \mathcal{A}_1$, $b \in \mathcal{A}_2$ and $b a x \in \mathcal{A}_1 \ast_{nu}\mathcal{A}_2$. This is also true if $x$ is absent. 
\item[$(2)$] $\varphi_1 \rhd_{\psi_2} \varphi_2 (x a b) =  \varphi_1  \rhd_{\psi_2} \varphi_2(xa)\varphi_2 (b)  \text{~for~} a \in \mathcal{A}_1$, $b \in \mathcal{A}_2$ and $xab \in \mathcal{A}_1 \ast_{nu}\mathcal{A}_2$. This is also true if $x$ is absent.     
\item[$(3)$] $\varphi_1 \rhd_{\psi_2} \varphi_2 (a_1 b_1  \cdots  b_{n-1} a_n)  = (\varphi_2 (b_j) - \psi_2 (b_j)) \varphi_1 \rhd_{\psi_2} \varphi_2 (a_1 b_1 a_2 \cdots b_{j-1} a_j)  \varphi_1 \rhd_{\psi_2} \varphi_2 (a_{j+1} b_{j+1} \cdots  b_{n-1} a_n) \\ + \psi_2 (b_j) \varphi_1 \rhd_{\psi_2} \varphi_2 (a_1 b_1 a_2 \cdots b_{j-1} a_j a_{j+1} b_{j+1} \cdots  b_{n-1} a_n)$  for $a_k \in \mathcal{A}_1$, $b_k \in \mathcal{A}_2$, $1 \leq j \leq n-1$, $n \geq 2$.   
\end{itemize} 
\end{theorem}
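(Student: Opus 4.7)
The plan is to unwind the definition of $\varphi := \varphi_1 \rhd_{\psi_2} \varphi_2$ as the restriction of the c-free product $\Phi := \widetilde{\varphi_1}\,{}_{\widetilde{\delta_1}}\!\ast_{\widetilde{\psi_2}} \widetilde{\varphi_2}$ to $\mathcal{A}_1 \ast_{nu} \mathcal{A}_2$, and to reduce each of the three identities to the c-free defining multiplicativity on alternating words in which every letter is centered. Two observations will be used throughout: any $a \in \mathcal{A}_1$ is automatically in $\ker \widetilde{\delta_1}$, and for $b \in \mathcal{A}_2$ the decomposition $b = \tilde{b} + \psi_2(b)\cdot 1_{\widetilde{\mathcal{A}_2}}$ with $\tilde{b} \in \ker \widetilde{\psi_2}$ permits us to trade arbitrary $b$'s for $\widetilde{\psi_2}$-centered ones at the cost of inserting the scalar $\psi_2(b)$.

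I treat rule (3) first since it carries the main combinatorial content. Applying Lemma \ref{formula11} with $x_k = a_k$, $y_k = b_k$, and $p_k = \psi_2(b_k)$ expands the word $a_1 b_1 \cdots b_{n-1} a_n$ as a sum, indexed by subsets $S = \{k_1 < \cdots < k_m\} \subset \{1,\ldots,n-1\}$, of terms $\prod_{j \notin S} \psi_2(b_j) \cdot A_{S_1}\tilde{b}_{k_1} A_{S_2} \cdots \tilde{b}_{k_m} A_{S_{m+1}}$, where $A_{S_i}$ is the product of the consecutive $a_k$'s indexed by the block $S_i$. Each such word is alternating in $\widetilde{\mathcal{A}_1}\ast\widetilde{\mathcal{A}_2}$ with every letter centered, so the c-free defining relation evaluates $\Phi$ on it as $\prod_i \varphi_1(A_{S_i}) \prod_l (\varphi_2(b_{k_l}) - \psi_2(b_{k_l}))$. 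I then split the sum according to the dichotomy $j \in S$ vs.\ $j \notin S$: in the first case ($j = k_i$) the expression factors cleanly into independent contributions on $\{1,\ldots,j-1\}$ and $\{j+1,\ldots,n-1\}$, which are recognized, by the same formula applied to the subwords, as $\varphi(a_1 b_1 \cdots a_j)$ and $\varphi(a_{j+1} \cdots a_n)$ and yield the first summand of (3); in the second case the scalar $\psi_2(b_j)$ appears, and the unique block of $\{S_i\}$ containing both positions $j$ and $j+1$ contributes $\varphi_1$ of a product in which $a_j a_{j+1}$ appears adjacently and may be rewritten as a single $\mathcal{A}_1$-letter, so the remaining sum is precisely the expansion of $\varphi$ on the shorter word $a_1 b_1 \cdots b_{j-1}(a_j a_{j+1}) b_{j+1} \cdots a_n$.

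For rules (1) and (2), I first normalize by absorbing adjacent same-algebra letters into single elements, so that $bax$ (resp.\ $xab$) may be assumed to be an alternating word beginning (resp.\ ending) with the given $\mathcal{A}_2$-letter. Writing $b = \tilde{b} + \psi_2(b)\cdot 1_{\widetilde{\mathcal{A}_2}}$ decomposes $\Phi(bax)$ into a $\psi_2(b)\varphi(ax)$ piece and a piece beginning with the centered $\tilde{b}$. Applying the same Lemma \ref{formula11}-type centering to the remaining $b$-letters in the tail and invoking the c-free factorization pulls the scalar $\widetilde{\varphi_2}(\tilde{b}) = \varphi_2(b) - \psi_2(b)$ out as a prefactor multiplying $\varphi(ax)$; summing the two pieces gives $\varphi_2(b)\varphi(ax)$, proving (1). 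Rule (2) is proved by the left-right symmetric argument.

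The main obstacle is the combinatorial bookkeeping in the $j \notin S$ case of (3): one must verify that $\{S \subset \{1,\ldots,n-1\} : j \notin S\}$ is in natural bijection with the subsets of $b$-positions in the shortened word, and that the $\varphi_1$-factor carried by the block of $\{S_i\}$ containing both $j$ and $j+1$ in the original expansion agrees with the $\varphi_1$-factor carried by the corresponding block in the shortened expansion, where the merged letter $a_j a_{j+1}$ plays the role of a single $\mathcal{A}_1$-element. Once this block-by-block matching is in place, the proof reduces to rearranging the Lemma \ref{formula11} expansion.
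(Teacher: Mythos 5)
Your proof is correct and follows essentially the same route as the paper's: both rest on Lemma \ref{formula11} together with the c-free defining factorization applied to alternating words whose letters are centered (the $a$'s lying in $\ker\widetilde{\delta_1}$ automatically, the $b$'s after subtracting $\psi_2(b)1$). The only difference is organizational --- the paper centers $b_j$ first (after reducing to $j=1$) and then expands only the tail, whereas you expand the whole word and partition the sum according to whether $j \in S$; this reorganization also lets you supply the arguments for (1) and (2) that the paper dismisses as ``similar.''
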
 
\begin{proof}
Since (1) and (2) can be proved similarly, we only prove (3). Moreover, we assume that $j = 1$ since the proof for general $j$ is essentially the same.  
We denote by $1$ the unit $1_{\widetilde{\mathcal{A}_1 \ast_{nu} \mathcal{A}_2}}$ for simplicity. First we obtain 
\begin{equation}
\begin{split}
\widetilde{\varphi_1} {}_{\widetilde{\delta}_1}\!\!\ast_{\widetilde{\psi_2}} \widetilde{\varphi_{2}} (a_1 b_1 a_2 \cdots b_{n-1} a_n) &= \widetilde{\varphi_1} {}_{\widetilde{\delta}_1}\!\!\ast_{\widetilde{\psi_2}} \widetilde{\varphi_{2}}(a_1 (b_1 - \psi_2 (b_1)1) a_2 \cdots b_{n-1} a_n) \\&~~~~~~~~~~~~ + \psi_2 (b_1)\widetilde{\varphi_1} {}_{\widetilde{\delta}_1}\!\!\ast_{\widetilde{\psi_2}} \widetilde{\varphi_{2}}(a_1 a_2 \cdots b_{n-1} a_n). 
\end{split}
\end{equation} 
Using the identity (\ref{eq98}) and the definition of c-free products, we have 
\begin{equation*}
\begin{split}
&\widetilde{\varphi_1} {}_{\widetilde{\delta}_1}\!\!\ast_{\widetilde{\psi_2}} \widetilde{\varphi_{2}} (a_1 (b_1 - \psi_2(b_1)1) a_2 \cdots b_{n-1} a_n) \\
&= \widetilde{\varphi_1} {}_{\widetilde{\delta}_1}\!\!\ast_{\widetilde{\psi_2}} \widetilde{\varphi_{2}}\Big{(} a_1 (b_1 - \psi_2(b_1)1) \sum_{S \subset \{2, \cdots, n-1 \}} \Big{(} \prod_{j \notin S} \psi_2(b_j) \Big{)} 
\Big{(} a_{S_1}(b_{k_1} -  \psi_2 ( b_{k_1})1) \cdots a_{S_{m+1}} \Big{)} \Big{)} \\ 
&= (\varphi_2(b_1) - \psi_2(b_1))  \sum_{S \subset \{2, \cdots, n-1 \}} \Big{(}\prod_{j \notin S} \psi_2 (b_j) \Big{)} \Big{(} \prod_{j \in S} (\varphi_2(b_j) - \psi_2 (b_j)) \Big{)} \varphi_1 (a_1)\prod_{j = 1}^{|S|+1} \varphi_1 (a_{S_j})  \\ 
&= \varphi_1 (a_1) (\varphi_2(b_1) - \psi_2(b_1)) \widetilde{\varphi_1} {}_{\widetilde{\delta}_1}\!\!\ast_{\widetilde{\psi_2}} \widetilde{\varphi_{2}} (a_2 b_2 \cdots b_{n-1} a_n). 
\end{split}
\end{equation*}  
\end{proof}
\begin{theorem}\label{assoc71}
The c-monotone product is associative. 
\end{theorem}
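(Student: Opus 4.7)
My plan is to separate the statement into its two state components. The $\psi$-component associativity reduces to $(\psi_1 \rhd \psi_2) \rhd \psi_3 = \psi_1 \rhd (\psi_2 \rhd \psi_3)$, which is the known associativity of the monotone product; in the probability-measure setting this is immediate from $H_{\mu \rhd \nu} = H_\mu \circ H_\nu$ and associativity of function composition. What remains is to prove
\[
(\varphi_1 \rhd_{\psi_2} \varphi_2) \rhd_{\psi_3} \varphi_3 \;=\; \varphi_1 \rhd_{\psi_2 \rhd \psi_3} (\varphi_2 \rhd_{\psi_3} \varphi_3)
\]
as linear functionals on $\mathcal{A}_1 \ast_{nu} \mathcal{A}_2 \ast_{nu} \mathcal{A}_3$.

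For pairs of probability measures this is a short check using Proposition \ref{asso111}. Applying $H_{\mu \rhd_\nu \mu'} = H_\mu \circ H_\nu + H_{\mu'} - H_\nu$ twice on each side and substituting $H_{\nu_2 \rhd \nu_3} = H_{\nu_2} \circ H_{\nu_3}$ gives, in both cases,
\[
H_{\mu_1} \circ H_{\nu_2} \circ H_{\nu_3} + H_{\mu_2} \circ H_{\nu_3} - H_{\nu_2} \circ H_{\nu_3} + H_{\mu_3} - H_{\nu_3};
\]
the extension from compactly supported to arbitrary probability measures is provided by the remark after Proposition \ref{asso111}. For the general algebraic statement I would induct on the length $n$ of an alternating monomial $c_1 c_2 \cdots c_n$ in $\mathcal{A}_1 \ast_{nu} \mathcal{A}_2 \ast_{nu} \mathcal{A}_3$, using the calculation rules (1)--(3) of Theorem \ref{rules1}. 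On the left-bracketed side, $\mathcal{A}_3$ is the right factor: each $c_j \in \mathcal{A}_3$ is eliminated via rule (3) into a $(\varphi_3 - \psi_3)(c_j)$-contribution (separating its $\mathcal{A}_1 \ast_{nu} \mathcal{A}_2$ neighbors) plus a $\psi_3(c_j)$-contribution (merging them), and the resulting $\mathcal{A}_1 \ast_{nu} \mathcal{A}_2$-words are then evaluated under $\varphi_1 \rhd_{\psi_2} \varphi_2$ and $\psi_1 \rhd \psi_2$ by the induction hypothesis. On the right-bracketed side, $\mathcal{A}_2 \ast_{nu} \mathcal{A}_3$ plays the role of the right factor carrying $\varphi_2 \rhd_{\psi_3} \varphi_3$ and $\psi_2 \rhd \psi_3$; rule (3) now splits off the maximal $(\mathcal{A}_2 \ast_{nu} \mathcal{A}_3)$-substrings between consecutive $\mathcal{A}_1$-letters, and those compound evaluations unfold by a second application of Theorem \ref{rules1} relative to $\psi_3$.

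The main obstacle will be the combinatorial matching of these two expansions, since the splittings are indexed by different positions ($\mathcal{A}_3$-letters on one side, $\mathcal{A}_1$-letters on the other). The cleanest resolution is to verify that both bracketings satisfy a common ``three-level'' recursion on $\mathcal{A}_1 \ast_{nu} \mathcal{A}_2 \ast_{nu} \mathcal{A}_3$ — the natural ternary analogue of Theorem \ref{rules1}, where any block of consecutive letters from a strictly ``higher'' algebra (i.e.\ later in the chain $\mathcal{A}_1, \mathcal{A}_2, \mathcal{A}_3$) surrounded by letters of a ``lower'' algebra can be split off using the corresponding $\psi$-state. Since that recursion, together with the single-algebra restrictions $\varphi_i$ and $\psi_i$, determines a unique linear functional, checking that both iterated c-monotone products satisfy it gives associativity. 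Verifying the recursion for each bracketing is then a direct, if somewhat tedious, application of the binary rules already established.
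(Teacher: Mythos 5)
Your reduction of the second component to ordinary monotone associativity and your computation at the level of reciprocal Cauchy transforms are both correct (and agree with the paper, which records the measure-level identity just before the theorem). But the theorem is the algebraic statement about $\varphi_1 \rhd_{\psi_2 \rhd \psi_3} (\varphi_2 \rhd_{\psi_3} \varphi_3) = (\varphi_1 \rhd_{\psi_2} \varphi_2) \rhd_{\psi_3} \varphi_3$ on $\mathcal{A}_1 \ast_{nu} \mathcal{A}_2 \ast_{nu} \mathcal{A}_3$, and there your argument stops exactly where the proof has to begin. You correctly identify that the two bracketings expand along different splittings (peaks in $\mathcal{A}_3$ versus blocks between consecutive $\mathcal{A}_1$-letters), and you propose to resolve the mismatch by showing both sides satisfy a common ``three-level'' recursion that determines the functional uniquely. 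That recursion, however, is never stated precisely, and more importantly the claim that verifying it for each bracketing is ``a direct application of the binary rules'' is not substantiated --- it is the entire content of the theorem. For instance, to check that $(\varphi_1 \rhd_{\psi_2} \varphi_2) \rhd_{\psi_3} \varphi_3$ satisfies the peak rule for an $\mathcal{A}_2$-letter flanked by $\mathcal{A}_1$-letters, you must first eliminate all $\mathcal{A}_3$-letters via the outer rules (which merges and separates $\mathcal{A}_1 \ast_{nu} \mathcal{A}_2$-blocks in ways that change the local structure around that $\mathcal{A}_2$-letter) and then show the inner rule commutes with this elimination; symmetrically, checking the $\mathcal{A}_3$-peak rule for the right bracketing requires unfolding the compound evaluations $\varphi_2 \rhd_{\psi_3} \varphi_3$ and $\psi_2 \rhd \psi_3$ on whole $\mathcal{A}_2 \ast_{nu} \mathcal{A}_3$-blocks. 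Neither verification is routine, and nothing in your proposal supplies it.

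For comparison, the paper avoids the global matching problem entirely: it inducts on the number of $\mathcal{A}_3$-letters in an elementary word, isolates the last such letter $c$ together with the last preceding $\mathcal{A}_1$-letter $a$, writes the word as $uavcw$ with $v \in \mathcal{A}_2 \ast_{nu} \mathcal{A}_3$ ending in $\mathcal{A}_2$ and $w \in \mathcal{A}_1 \ast_{nu} \mathcal{A}_2$, and then computes both bracketings explicitly in the two cases ($w$ beginning with an $\mathcal{A}_1$- or an $\mathcal{A}_2$-letter) using Theorem \ref{rules1} and the induction hypothesis. Your ``common recursion'' strategy could in principle be made to work (it amounts to proving directly that the $\mathcal{A}_i$ are c-monotone independent in either iterated product, a fact the paper derives only \emph{after} the theorem), but as written the proposal defers the essential computation rather than performing it, so it does not yet constitute a proof.
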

\begin{proof}
Let $(\mathcal{A}_i, \varphi_i, \psi_i)$ $(1 \leq i \leq 3)$ be triples consisting of algebras and two linear functionals. 
We can naturally identify $(\mathcal{A}_1 \ast_{nu} \mathcal{A}_2) \ast_{nu} \mathcal{A}_3$ with $\mathcal{A}_1 \ast_{nu} (\mathcal{A}_2 \ast_{nu} \mathcal{A}_3)$ 
and denote them simply by $\mathcal{A}_1 \ast_{nu} \mathcal{A}_2 \ast_{nu} \mathcal{A}_3$. 
What should be proved is the equality $\varphi_1 \rhd_{\psi_2 \rhd \psi_3} (\varphi_2 \rhd_{\psi_3} \varphi_3) = (\varphi_1 \rhd_{\psi_2} \varphi_2) \rhd_{\psi_3} \varphi_3$ on $\mathcal{A}_1 \ast_{nu} \mathcal{A}_2 \ast_{nu} \mathcal{A}_3$.  
We call $x \in \mathcal{A}_1 \ast_{nu} \mathcal{A}_2 \ast_{nu} \mathcal{A}_3$ an elementary word 
if $x$ is of such a form as $x = x_1 x_2 \cdots x_n$, $x_j \in \mathcal{A}_{i_j}$, $i_1 \neq \cdots \neq i_n$. 
Each $s_i$ is simply called an element of $x$. We give a proof by induction on the number of elements of $\mathcal{A}_3$ contained in an elementary word.  
In this proof we use the notation $a$, $a'$ and $a_j$ for elements of $\mathcal{A}_1$, $b$, $b'$ and $b_j$ for elements of $\mathcal{A}_2$ and $c$ and $c_j$ for elements of $\mathcal{A}_3$. 

For $x \in \mathcal{A}_2 \ast_{nu} \mathcal{A}_3 \subset \mathcal{A}_1 \ast_{nu} \mathcal{A}_2 \ast_{nu} \mathcal{A}_3$, we can easily prove that $\varphi_1 \rhd_{\psi_2 \rhd \psi_3} (\varphi_2 \rhd_{\psi_3} \varphi_3)(x) = (\varphi_1 \rhd_{\psi_2} \varphi_2) \rhd_{\psi_3} \varphi_3 (x)$. We assume that the statement is the case for 
all elementary words containing not more than $n$ elements of $\mathcal{A}_3$. Let $x$ be an elementary word 
containing $n + 1$ elements of $\mathcal{A}_3$. Let $c$ be the ($n+1$)-th element of $\mathcal{A}_3$ contained in $x$ and let $a$, if exists, be the last element of $\mathcal{A}_1$ which appears before $c$. 
Except for some cases, $x$ can be written as $x = uavcw$, where each component satisfies the following property: \\
(i) $u$ is an elementary word in $\mathcal{A}_1 \ast_{nu} \mathcal{A}_2 \ast_{nu} \mathcal{A}_3$ containing not more than $n$ elements of $\mathcal{A}_3$; \\
(i\hspace{-.1em}i) $v$ is an elementary word in $\mathcal{A}_2 \ast_{nu} \mathcal{A}_3$ containing not more than $n$ elements of $\mathcal{A}_3$. The last element of $v$ belongs to $\mathcal{A}_2$, i.e., $v = v' b_1$; \\
(i\hspace{-.1em}i\hspace{-.1em}i) $w$ is an elementary word in $\mathcal{A}_1 \ast_{nu} \mathcal{A}_2$. \\ 
Here we note some remarks on the exceptional cases. 
For $x$ of such a form as $x = uacw$, the following proof is applicable by understanding that $v$ plays the role of a unit. For $x$ of such a form as $x = uavc$, the proof follows easily from the property (1) in Theorem \ref{rules1}. If $a$ does not appear, again from the property (1) in Theorem \ref{rules1} the proof is easy. Therefore, we only prove the claim for $x$ of the form $uavcw$. We consider the following two cases: 
\begin{itemize}
\item[(1)] The first element in $w$ belongs to $\mathcal{A}_1$, i.e., $w = a'w'$;  
\item[(2)] The first element in $w$ belongs to $\mathcal{A}_2$, i.e., $w = b_2w'$. 
\end{itemize}

Case (1): We obtain 
\begin{equation}
\begin{split}
&(\varphi_1 \rhd_{\psi_2} \varphi_2) \rhd_{\psi_3} \varphi_3 (uavcw) \\
&~~~ = (\varphi_3(c) - \psi_3(c))(\varphi_1 \rhd_{\psi_2} \varphi_2) \rhd_{\psi_3} \varphi_3 (uav)(\varphi_1 \rhd_{\psi_2} \varphi_2) \rhd_{\psi_3} \varphi_3 (w) \\ 
&~~~~~~~~~ + \psi_3(c) (\varphi_1 \rhd_{\psi_2} \varphi_2) \rhd_{\psi_3} \varphi_3 (uavw) \\ 
&~~~ = (\varphi_3(c) - \psi_3(c))\varphi_1 \rhd_{\psi_2 \rhd \psi_3} (\varphi_2 \rhd_{\psi_3} \varphi_3) (uav)
\varphi_1 \rhd_{\psi_2} \varphi_2 (w) \\ 
&~~~~~~~~~ + \psi_3(c) \varphi_1 \rhd_{\psi_2 \rhd \psi_3} (\varphi_2 \rhd_{\psi_3} \varphi_3) (uavw) \\ 
&~~~ = (\varphi_3(c) - \psi_3(c))\varphi_1 \rhd_{\psi_2 \rhd \psi_3} (\varphi_2 \rhd_{\psi_3} \varphi_3) (uav)
\varphi_1 \rhd_{\psi_2} \varphi_2 (w) \\ 
&~~~~~~~~~ + \psi_3 (c)(\varphi_2 \rhd_{\psi_3} \varphi_3 (v) - \psi_2 \rhd \psi_3 (v))\varphi_1 \rhd_{\psi_2 \rhd \psi_3} (\varphi_2 \rhd_{\psi_3} \varphi_3) (ua) \varphi_1 \rhd_{\psi_2} \varphi_2(w) \\ 
&~~~~~~~~~ + \psi_3 (c)\psi_2 \rhd \psi_3 (v) \varphi_1 \rhd_{\psi_2 \rhd \psi_3} (\varphi_2 \rhd_{\psi_3} \varphi_3) (uaw), \\ 
\end{split}
\end{equation}
where we have used the assumption of induction and the property (2) in Theorem \ref{rules1}. 
On the other hand, we obtain 
\begin{equation}
\begin{split}
&\varphi_1 \rhd_{\psi_2 \rhd \psi_3} (\varphi_2 \rhd_{\psi_3} \varphi_3) (uavcw) \\
&~~~ = (\varphi_2 \rhd_{\psi_3} \varphi_3(vc) - \psi_2 \rhd \psi_3(vc)) \varphi_1 \rhd_{\psi_2 \rhd \psi_3} (\varphi_2 \rhd_{\psi_3} \varphi_3) (ua)\varphi_1 \rhd_{\psi_2 \rhd \psi_3} (\varphi_2 \rhd_{\psi_3} \varphi_3 )(w)  \\ 
&~~~~~~~~~+ \psi_2 \rhd \psi_3(vc)\varphi_1 \rhd_{\psi_2 \rhd \psi_3} (\varphi_2 \rhd_{\psi_3} \varphi_3) (uaw)  \\ 
&~~~ = \varphi_2 \rhd_{\psi_3} \varphi_3(v) \varphi_3(c)  \varphi_1 \rhd_{\psi_2 \rhd \psi_3} (\varphi_2 \rhd_{\psi_3} \varphi_3) (ua)\varphi_1 \rhd_{\psi_2 \rhd \psi_3} (\varphi_2 \rhd_{\psi_3} \varphi_3 )(w)  \\ 
&~~~~~~~~~ -  \psi_2 \rhd \psi_3(v) \psi_3(c)\varphi_1 \rhd_{\psi_2 \rhd \psi_3} (\varphi_2 \rhd_{\psi_3} \varphi_3) (ua)\varphi_1 \rhd_{\psi_2 \rhd \psi_3} (\varphi_2 \rhd_{\psi_3} \varphi_3 )(w)  \\ 
&~~~~~~~~~+ \psi_3(c) \psi_2 \rhd \psi_3(v) \varphi_1 \rhd_{\psi_2 \rhd \psi_3} (\varphi_2 \rhd_{\psi_3} \varphi_3) (uaw)  \\ 
&~~~ = (\varphi_3(c)-\psi_3(c)+\psi_3(c)) \varphi_1 \rhd_{\psi_2 \rhd \psi_3} (\varphi_2 \rhd_{\psi_3} \varphi_3) (uav)\varphi_1 \rhd_{\psi_2}\varphi_2 (w)  \\ 
&~~~~~~~~~ -  \psi_2 \rhd \psi_3(v) \psi_3(c)\varphi_1 \rhd_{\psi_2 \rhd \psi_3} (\varphi_2 \rhd_{\psi_3} \varphi_3) (ua)\varphi_1 \rhd_{\psi_2}\varphi_2 (w)  \\ 
&~~~~~~~~~+ \psi_3(c) \psi_2 \rhd \psi_3(v) \varphi_1 \rhd_{\psi_2 \rhd \psi_3} (\varphi_2 \rhd_{\psi_3} \varphi_3) (uaw)  \\ 
&~~~ = (\varphi_3(c)-\psi_3(c)) \varphi_1 \rhd_{\psi_2 \rhd \psi_3} (\varphi_2 \rhd_{\psi_3} \varphi_3) (uav)\varphi_1 \rhd_{\psi_2} \varphi_2 (w)  \\ 
&~~~~~~~~~ +  \psi_3(c) \varphi_1 \rhd_{\psi_2 \rhd \psi_3} (\varphi_2 \rhd_{\psi_3} \varphi_3) (ua) \Big{(}\varphi_2 \rhd_{\psi_3} \varphi_3 (v) - \psi_2 \rhd \psi_3(v)\Big{)}\varphi_1 \rhd_{\psi_2}\varphi_2 (w)  \\ 
&~~~~~~~~~+ \psi_3(c) \psi_2 \rhd \psi_3(v) \varphi_1 \rhd_{\psi_2 \rhd \psi_3} (\varphi_2 \rhd_{\psi_3} \varphi_3) (uaw).  \\ 
\end{split}
\end{equation}
Therefore, they coincide with each other. 

Case (2): Carrying out a similar calculation, we have 
\begin{equation}
\begin{split}
&(\varphi_1 \rhd_{\psi_2} \varphi_2) \rhd_{\psi_3} \varphi_3 (uavcw) \\
&~~~ = (\varphi_3(c) - \psi_3(c))\varphi_1 \rhd_{\psi_2 \rhd \psi_3} (\varphi_2 \rhd_{\psi_3} \varphi_3) (uav)
\varphi_1 \rhd_{\psi_2} \varphi_2 (w) \\ 
&~~~~~~~~~ + \psi_3(c) \varphi_1 \rhd_{\psi_2 \rhd \psi_3} (\varphi_2 \rhd_{\psi_3} \varphi_3) (uavw) \\ 
&~~~ = (\varphi_3(c) - \psi_3(c))\varphi_1 \rhd_{\psi_2 \rhd \psi_3} (\varphi_2 \rhd_{\psi_3} \varphi_3) (ua)\varphi_2 \rhd_{\psi_3} \varphi_3 (v)
\varphi_1 \rhd_{\psi_2} \varphi_2 (w) \\ 
&~~~~~~~~~ + \psi_3 (c)(\varphi_2 \rhd_{\psi_3} \varphi_3 (vb_2) - \psi_2 \rhd \psi_3 (vb_2))\varphi_1 \rhd_{\psi_2 \rhd \psi_3} (\varphi_2 \rhd_{\psi_3} \varphi_3) (ua) \varphi_1 \rhd_{\psi_2} \varphi_2 (w') \\ 
&~~~~~~~~~ + \psi_3 (c)\psi_2 \rhd \psi_3 (vb_2) \varphi_1 \rhd_{\psi_2 \rhd \psi_3} (\varphi_2 \rhd_{\psi_3} \varphi_3) (uaw'). \\ 
\end{split}
\end{equation}
On the other hand we have  
\begin{equation}
\begin{split}
&\varphi_1 \rhd_{\psi_2 \rhd \psi_3} (\varphi_2 \rhd_{\psi_3} \varphi_3) (uavcw) \\
&~~~ = (\varphi_2 \rhd_{\psi_3} \varphi_3 (vcb_2) - \psi_2 \rhd \psi_3 (vcb_2)) \varphi_1 \rhd_{\psi_2 \rhd \psi_3} (\varphi_2 \rhd_{\psi_3} \varphi_3) (ua)\varphi_1 \rhd_{\psi_2} \varphi_2 (w')  \\ 
&~~~~~~~~~+ \psi_2 \rhd \psi_3 (vcb_2) \varphi_1 \rhd_{\psi_2 \rhd \psi_3} (\varphi_2 \rhd_{\psi_3} \varphi_3) (uaw')  \\ 
&~~~ = \varphi_2 \rhd_{\psi_3} \varphi_3 (v)(\varphi_3 (c) - \psi_3 (c))\varphi_2 (b_2)\varphi_1 \rhd_{\psi_2 \rhd \psi_3} (\varphi_2 \rhd_{\psi_3} \varphi_3) (ua)\varphi_1 \rhd_{\psi_2} \varphi_2 (w')  \\ 
&~~~~~~~~~ + \psi_3 (c)\varphi_2 \rhd_{\psi_3} \varphi_3 (vb_2) \varphi_1 \rhd_{\psi_2 \rhd \psi_3} (\varphi_2 \rhd_{\psi_3} \varphi_3) (ua)\varphi_1 \rhd_{\psi_2} \varphi_2 (w')  \\
&~~~~~~~~~- \psi_2 \rhd \psi_3 (vcb_2)\varphi_1 \rhd_{\psi_2 \rhd \psi_3} (\varphi_2 \rhd_{\psi_3} \varphi_3) (ua)\varphi_1 \rhd_{\psi_2}\varphi_2 (w')  \\ 
&~~~~~~~~~+ \psi_2 \rhd \psi_3 (vcb_2) \varphi_1 \rhd_{\psi_2 \rhd \psi_3} (\varphi_2 \rhd_{\psi_3} \varphi_3) (uaw')  \\ 
&~~~ = \varphi_2 \rhd_{\psi_3} \varphi_3 (v)(\varphi_3 (c) - \psi_3 (c)) \varphi_1 \rhd_{\psi_2 \rhd \psi_3} (\varphi_2\rhd_{\psi_3} \varphi_3) (ua)\varphi_1 \rhd_{\psi_2}\varphi_2 (w)  \\ 
&~~~~~~~~~ + \psi_3 (c)\varphi_2 \rhd_{\psi_3} \varphi_3 (vb_2)  \varphi_1 \rhd_{\psi_2 \rhd \psi_3} (\varphi_2 \rhd_{\psi_3} \varphi_3) (ua)\varphi_1 \rhd_{\psi_2}\varphi_2 (w')  \\ 
&~~~~~~~~~- \psi_3 (c)\psi_2 \rhd \psi_3 (vb_2)\varphi_1 \rhd_{\psi_2 \rhd \psi_3} (\varphi_2 \rhd_{\psi_3} \varphi_3) (ua)\varphi_1 \rhd_{\psi_2}\varphi_2 (w')  \\ 
&~~~~~~~~~+ \psi_3 (c)\psi_2 \rhd \psi_3 (vb_2) \varphi_1 \rhd_{\psi_2 \rhd \psi_3} (\varphi_2 \rhd_{\psi_3} \varphi_3) (uaw').  \\ 
\end{split}
\end{equation}
Then $\varphi_1 \rhd_{\psi_2 \rhd \psi_3} (\varphi_2 \rhd_{\psi_3} \varphi_3) (x) = (\varphi_1 \rhd_{\psi_2} \varphi_2) \rhd_{\psi_3} \varphi_3 (x)$ for any $x$ containing $n+1$ elements of $\mathcal{A}_3$. By induction, $\varphi_1 \rhd_{\psi_2 \rhd \psi_3} (\varphi_2 \rhd_{\psi_3} \varphi_3) (x) = (\varphi_1 \rhd_{\psi_2} \varphi_2) \rhd_{\psi_3} \varphi_3 (x)$ holds for all elementary words $x$ in $\mathcal{A}_1 \ast_{nu} \mathcal{A}_2 \ast_{nu} \mathcal{A}_3$. 
\end{proof}
Using the associativity of the c-monotone product, we can define the product $\rhd_{i \in I} (\mathcal{A}_i, \varphi_i, \psi_i)$ for a linearly ordered set $I$. 
Now we come to define the concept of c-monotone independence. 
\begin{definition}
Let $(\mathcal{A}, \varphi, \psi)$ be an algebraic probability space equipped with two states. We assume that $\mathcal{A}$ is unital. 
Let $I$ be a linearly ordered set. A family of subalgebras $\{\mathcal{A}_i \}_{i \in I}$ is said to be c-monotone independent if 
the following properties are satisfied: 
\begin{itemize}
\item[$(1)$] $\varphi (a_1 a_2 \cdots a_n ) = \varphi (a_1) \varphi(a_2 \cdots a_n) \text{~for~} a_k \in \mathcal{A}_{i_k}$ with $i_1, i_2, \cdots, i_n \in I$ and $i_1 > i_2$;    
\item[$(2)$] $\varphi (a_1 a_2 \cdots a_n ) = \varphi (a_n) \varphi(a_1\cdots a_{n-1}) \text{~for~} a_k \in \mathcal{A}_{i_k}$ with $i_1, i_2, \cdots, i_n \in I$ and $i_n > i_{n-1}$;   
\item[$(3)$]  $\varphi (a_1 a_2 \cdots a_n)  = (\varphi (a_j) - \psi (a_j)) \varphi (a_1 \cdots a_{j-1})  \varphi (a_{j+1} \cdots a_n) + \psi (a_j) \varphi (a_1a_2 \cdots a_{j-1} a_{j+1} \cdots a_n)$ \\ for $a_k \in \mathcal{A}_{i_k}$ with $i_1, i_2, \cdots, i_n \in I $ and $i_{j-1} < i_j > i_{j+1}$ with $2 \leq j \leq n-1$; 
\item[$(4)$] $\mathcal{A}_i$ are monotone independent with respect to $\psi$.  
\end{itemize} 
\end{definition}
\begin{remark}
(1) This definition includes monotone  (resp. Boolean) independence in the special case $\varphi = \psi$ (resp. $\psi = 0$) on $\mathcal{A}_i$ for all $i \in I$. \\ 
(2) One can define the independence for random variables by considering the subalgebra generated by each random variable without the unit. \\
(3) Let $a_1$ and $a_2$ be c-monotone independent and self-adjoint. 
If $a_i$ has a pair of probability distributions $(\mu_i,\nu_i)$ under a pair of states, then the distribution of $a_1+a_2$ is $(\mu_1,\nu_1)\rhd(\mu_2,\nu_2)$. This fact follows from the definition of the c-monotone convolution. 
\end{remark}
We can prove that $\mathcal{A}_i$ ($i \in I$) are c-monotone independent in $(\mathcal{A}, \varphi, \psi) = \rhd_{i \in I} (\mathcal{A}_i, \varphi_i, \psi_i)$.  

Here we mention a relation between c-monotone/free independence and orthogonal independence. 
The reader is referred to \cite{Len1} for the definition and properties of orthogonal independence and the orthogonal (additive) convolution. 
\begin{proposition}
With the notation in Definition \ref{c-monotoneprod1}, $\mathcal{A}_1$ is orthogonal to $\mathcal{A}_2$ with respect to  the linear functional $(\varphi_1 \rhd_{\psi_2} 0_2, \psi_1 \rhd \psi_2)$ in the algebra $\mathcal{A} := \mathcal{A}_1 \ast_{nu} \mathcal{A}_2$ for arbitrary linear functionals $\varphi_1, \psi_1, \psi_2$. 
$0_2$  denotes the $0$ linear functional on $\mathcal{A}_2$. 
\end{proposition}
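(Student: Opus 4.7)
The plan is to verify the definition of orthogonal independence from \cite{Len1} directly by reading off what the calculation rules of Theorem \ref{rules1} become once $\varphi_2$ is replaced by the zero functional $0_2$. At the $\psi$-level the requirement is monotone independence of $\mathcal{A}_1$ and $\mathcal{A}_2$, which is automatic because $\psi_1 \rhd \psi_2$ is by construction the monotone product of $\psi_1$ and $\psi_2$, so $\mathcal{A}_1$ and $\mathcal{A}_2$ are monotone independent with respect to $\psi_1\rhd\psi_2$.

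For the $\varphi$-level requirements I would proceed in two steps. First, rules (1) and (2) of Theorem \ref{rules1}, with the prefactor $\varphi_2(b)=0$, force $\varphi_1 \rhd_{\psi_2} 0_2(bax)=0$ and $\varphi_1 \rhd_{\psi_2} 0_2(xab)=0$ whenever $a\in\mathcal{A}_1$ and $b\in\mathcal{A}_2$. Iterating, $\varphi_1\rhd_{\psi_2}0_2$ vanishes on every element of $\mathcal{A}_1 \ast_{nu} \mathcal{A}_2$ whose leftmost or rightmost letter lies in $\mathcal{A}_2$; this is the boundary-annihilation part of orthogonality. Second, rule (3) with $\varphi_2(b_j)=0$ collapses to the recursion
\[
\varphi_1\rhd_{\psi_2}0_2(a_1 b_1 \cdots b_{n-1}a_n) = \psi_2(b_j)\bigl[\varphi_1\rhd_{\psi_2}0_2(a_1 b_1 \cdots b_{j-1}a_j a_{j+1} b_{j+1}\cdots a_n) - \varphi_1\rhd_{\psi_2}0_2(a_1 b_1\cdots b_{j-1} a_j)\,\varphi_1\rhd_{\psi_2}0_2(a_{j+1} b_{j+1}\cdots b_{n-1}a_n)\bigr],
\]
which is the interior factorisation characterising orthogonality: a letter of $\mathcal{A}_2$ sandwiched between two letters of $\mathcal{A}_1$ may be extracted at the cost of a $\psi_2$-weight plus a cross-term correction.

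The main obstacle is simply spelling out that these three properties --- monotone independence at the $\psi$-level, boundary vanishing, and the interior recursion --- are together equivalent to the axioms of orthogonal independence in \cite{Len1}. A sanity check at the level of probability measures is already visible: specialising $\mu_2=\delta_0$ in Proposition \ref{asso111} yields $H_{\mu_1\rhd_{\nu_2}\delta_0}=H_{\mu_1}\circ H_{\nu_2}+z-H_{\nu_2}$, which is exactly the reciprocal Cauchy transform of the orthogonal convolution $\mu_1\vdash\nu_2$ of \cite{Len1}. Once the equivalence of axiom-sets is recorded, the proposition follows immediately from Theorem \ref{rules1}.
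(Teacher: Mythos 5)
Your proposal is correct and follows exactly the paper's route: the paper's entire proof is the single sentence that the claim ``follows from Theorem \ref{rules1} immediately,'' and what you have done is supply the substitution $\varphi_2 = 0_2$ into rules (1)--(3) that the paper leaves implicit, obtaining the boundary vanishing and the interior recursion $\psi_2(b)\bigl[\varphi(\cdots a_j a_{j+1}\cdots) - \varphi(\cdots a_j)\varphi(a_{j+1}\cdots)\bigr]$ which, together with monotone independence at the $\psi$-level, are Lenczewski's defining relations for orthogonality.
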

\begin{remark}
Orthogonal independence has been defined in a unital algebra \cite{Len1}. To state this 
proposition strictly along the line of the original definition, we only need to unitize $\mathcal{A}$ and extend the linear functionals to $\widetilde{\mathcal{A}}$.
\end{remark} 
\begin{proof}
This fact follows from Theorem \ref{rules1} immediately. 
\end{proof}
In terms of convolutions of probability measures, we obtain the following relations. 
\begin{gather}
(\mu, \delta_0) \boxplus (\delta_0, \nu) = (\mu \vdash \nu, \nu), \label{ortho1} \\
(\mu, \lambda) \rhd (\delta_0, \nu) = (\mu \vdash \nu, \lambda \rhd \nu). \label{ortho2} 
\end{gather} 
The orthogonal convolution is characterized by $H_{\mu \vdash \nu} (z) = H_\mu(H_\nu(z)) - H_\nu(z) + z$ (see Theorem 6.2 of \cite{Len1}).   
If we use the relation in Proposition \ref{asso111}, this characterization follows immediately.  

We give another application: the equality $\mu \rhd \nu = (\mu \vdash \nu) \uplus \nu$ (see Corollary 6.6 in \cite{Len1}) can be understood in terms of the associativity of the c-monotone convolution. 
We first note that $(\delta_0, \nu) \rhd (\nu, \delta_0) = (\delta_0 {}_{\delta_0}\!\! \boxplus _{\delta_0} \nu, \nu) = (\nu, \nu)$ and $(\mu, \mu) \rhd (\delta_0, \nu) = (\mu \vdash \nu, \mu)$. 
Using the associativity we can calculate $(\mu, \mu) \rhd (\delta_0, \nu) \rhd (\nu, \delta_0)$ in different two ways and we obtain $((\mu \vdash \nu)\uplus \nu, \mu \rhd \nu) = (\mu \rhd \nu, \mu \rhd \nu)$.

\section{Conditionally monotone cumulants}\label{Cumulants112}
\subsection{Power additivity of cumulants}\label{condmonocum1} 
In the paper \cite{H-S}, monotone cumulants were introduced. The crucial concept is the dot operation $N.X$ which is defined as the sum of independent random variables with the same distributions as $X$. If $X$ is a self-adjoint element with the distribution $\mu$, the random variable $N.X$ has the probability 
distribution $\mu^{\star N}$, where $\star$ is the additive convolution associated to a concept of independence. Moreover, the uniqueness of cumulants holds also in the setting of two states (we explain this briefly at the end of this subsection). 

While the dot operation is fundamental for cumulants, we use a method based on the characterization in Proposition \ref{asso111} to define cumulants. 
This method is useful to obtain relations between generating functions. 

The c-monotone convolution consists of two components. 
While the first component comes from a c-free convolution, c-free cumulants are not useful to find c-monotone cumulants. 
For the second component, we use the monotone cumulants. 

We define c-monotone cumulants for the first component. Let $D_\lambda$ be the dilation operator defined so that $\int_{\real}f(x) (D_\lambda \mu) (dx) = \int_{\real}f(\lambda x)\mu(dx)$ for all bounded continuous functions $f$. 
\begin{lemma}\label{cum11}
 Let $\mu \in \pmo$. We define $b_n = b_n(\mu)$ by  
$H_{\mu}(z) = z \Big(1 + \sum_{n = 1}^{\infty} \frac{b_n}{z^{n}} \Big{)}$ in the sense of asymptotic expansion. 
Then $b_n$ is of the form 
\begin{equation}\label{cum14}
b_n = -m_n + W_n(m_{1}, \cdots, m_{n-1}),
\end{equation}
where $W_n$ is a polynomial. Moreover, $b_n(D_\lambda \mu) = \lambda^n b_n(\mu)$ for $\lambda > 0$. 
In particular, $W_n$ does not contain a constant term or linear terms. 
\end{lemma}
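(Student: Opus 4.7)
The plan is to reduce everything to formal inversion of a power series in $1/z$. Since $\mu$ has finite moments of all orders, the Cauchy transform admits the asymptotic expansion
\begin{equation*}
G_\mu(z) \sim \frac{1}{z}\Bigl(1 + \sum_{n\geq 1} \frac{m_n}{z^n}\Bigr) \quad \text{as } z \to \infty \text{ nontangentially},
\end{equation*}
where $m_n = m_n(\mu)$ is the $n$-th moment (this is standard and proved by expanding $1/(z-x)$ in a Neumann series under the integral). Hence $z G_\mu(z) \sim 1 + \sum_{n\geq 1} m_n/z^n$, and the asymptotic expansion of $H_\mu(z)/z = 1/(zG_\mu(z))$ is given by formally inverting this series.

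Writing $H_\mu(z) = z(1 + \sum_{n\geq 1} b_n/z^n)$, the relation $G_\mu(z) H_\mu(z) = 1$ becomes
\begin{equation*}
\Bigl(1 + \sum_{n\geq 1} \frac{m_n}{z^n}\Bigr)\Bigl(1 + \sum_{n\geq 1} \frac{b_n}{z^n}\Bigr) = 1.
\end{equation*}
Comparing coefficients of $z^{-n}$ yields the recurrence
\begin{equation*}
b_n = -m_n - \sum_{k=1}^{n-1} m_k\, b_{n-k}.
\end{equation*}
A straightforward induction on $n$ gives $b_n = -m_n + W_n(m_1, \ldots, m_{n-1})$ with $W_n \in \mathbb{Z}[m_1,\ldots,m_{n-1}]$, establishing the first assertion.

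For homogeneity, I would note that $m_n(D_\lambda \mu) = \lambda^n m_n(\mu)$ for $\lambda > 0$, which follows immediately from the definition of $D_\lambda$. Proceed by induction: assuming $b_k(D_\lambda\mu) = \lambda^k b_k(\mu)$ for $k < n$, the recurrence gives
\begin{equation*}
b_n(D_\lambda \mu) = -\lambda^n m_n(\mu) - \sum_{k=1}^{n-1} \lambda^k m_k(\mu)\,\lambda^{n-k} b_{n-k}(\mu) = \lambda^n b_n(\mu).
\end{equation*}
Finally, the absence of constant and linear terms in $W_n$ follows from this homogeneity. Assigning weight $k$ to the variable $m_k$, every monomial $m_1^{\alpha_1}\cdots m_{n-1}^{\alpha_{n-1}}$ appearing in $W_n$ must have total weight $\sum_k k\alpha_k = n$. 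A constant term would have weight $0 \neq n$, and a linear monomial $c_k m_k$ has weight $k < n$; both are excluded.

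There is no real obstacle here — the entire argument is bookkeeping once the asymptotic expansion of $G_\mu$ is in hand. The only point requiring minor care is ensuring the asymptotic expansion is taken in the correct sense (formal coefficient-by-coefficient equality), which is exactly what the statement refers to by ``in the sense of asymptotic expansion'' and justifies for $\mu \in \pmo$ even without compact support.
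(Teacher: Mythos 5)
Your proof is correct and follows essentially the same route as the paper: both arguments amount to formally inverting the moment expansion of $zG_\mu(z)$ (the paper via the geometric series for $1/(1+\sum_k m_k z^{-k})$, you via the equivalent recurrence from $G_\mu H_\mu = 1$), and both obtain the homogeneity $b_n(D_\lambda\mu)=\lambda^n b_n(\mu)$ by a dilation argument, from which the absence of constant and linear terms in $W_n$ follows by weight counting.
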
 
\begin{proof}
We have 
\begin{equation}
\begin{split}
H_{\mu}(z) &= z\frac{1}{1 + \sum_{k = 1} ^{\infty}\frac{m_k}{z^k}} \\
           &= z\Big{(}1 - \sum_{k = 1} ^{\infty}\frac{m_k}{z^k} + (\sum_{k = 1} ^{\infty}\frac{m_k}{z^k})^2 - \cdots  \Big{)}, \\ 
\end{split}
\end{equation}
so that the polynomials $W_n$ exist. The last statement follows from the relation $H_{D_\lambda \mu}(z) = \lambda H_{\mu}(\lambda^{-1}z)$. 
\end{proof} 

\begin{lemma}
For any $n \geq 3$, there exists a polynomial $Y_n$ of $2n-3$ variables such that $b_n(\mu_1 \rhd_{\nu_2} \mu_2) = b_n(\mu_1) + b_n(\mu_2) + Y_n(b_1(\mu_1), \cdots, b_{n - 1}(\mu_1), b_1(\nu_2), \cdots, b_{n-2}(\nu_2))$. In addition, 
$Y_n$ does not contain a constant term or linear terms. 
For $n = 1$ and $2$, we have 
$b_1(\mu_1 \rhd_{\nu_2} \mu_2) = b_1(\mu_1) + b_1(\mu_2)$ and $b_2(\mu_1 \rhd_{\nu_2} \mu_2) = b_2(\mu_1) + b_2(\mu_2)$. 
\end{lemma}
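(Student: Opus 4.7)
The idea is to read off $b_n(\mu_1 \rhd_{\nu_2} \mu_2)$ directly from the functional equation in Proposition \ref{asso111} by comparing asymptotic expansions at $z = \infty$. By Lemma \ref{cum11}, every $H_\mu$ has the expansion
\begin{equation*}
H_\mu(z) - z = b_1(\mu) + \sum_{n \geq 2} b_n(\mu)\, z^{-(n-1)},
\end{equation*}
so $b_n(\sigma)$ is simply the coefficient of $z^{-(n-1)}$ in $H_\sigma(z) - z$. Starting from
\begin{equation*}
H_{\mu_1 \rhd_{\nu_2} \mu_2}(z) - z = \bigl(H_{\mu_1}(H_{\nu_2}(z)) - H_{\nu_2}(z)\bigr) + \bigl(H_{\mu_2}(z) - z\bigr),
\end{equation*}
I would rewrite the first bracket as $b_1(\mu_1) + \sum_{k \geq 2} b_k(\mu_1)\, H_{\nu_2}(z)^{-(k-1)}$, and the second as $b_1(\mu_2) + \sum_{k \geq 2} b_k(\mu_2)\, z^{-(k-1)}$. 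The cases $n = 1, 2$ follow at once by inspecting the constant term and the coefficient of $z^{-1}$.

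\textbf{Expansion for $n \geq 3$.} To handle general $n$, I would write $H_{\nu_2}(z) = z\bigl(1 + g(z)\bigr)$ with $g(z) = \sum_{m \geq 1} b_m(\nu_2) z^{-m}$ and expand
\begin{equation*}
\frac{1}{H_{\nu_2}(z)^{k-1}} = \frac{1}{z^{k-1}} \sum_{j \geq 0} \binom{-(k-1)}{j} g(z)^j.
\end{equation*}
Since $g(z)^j$ begins at order $z^{-j}$, the coefficient of $z^{-(n-1)}$ in $b_k(\mu_1)/H_{\nu_2}(z)^{k-1}$ is $b_k(\mu_1)$ times a polynomial in $b_1(\nu_2), \ldots, b_{n-k}(\nu_2)$ (vanishing unless $k \leq n$). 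Summing over $k$, the contribution $k = n$ gives exactly $b_n(\mu_1)$ (the $j = 0$ leading term), while contributions from $k = 2, \ldots, n-1$ depend only on $b_k(\mu_1)$ and on $b_1(\nu_2), \ldots, b_{n-k}(\nu_2)$. Together with $b_n(\mu_2)$ from the second bracket, this gives the desired decomposition
\begin{equation*}
b_n(\mu_1 \rhd_{\nu_2} \mu_2) = b_n(\mu_1) + b_n(\mu_2) + Y_n\bigl(b_1(\mu_1), \ldots, b_{n-1}(\mu_1), b_1(\nu_2), \ldots, b_{n-2}(\nu_2)\bigr).
\end{equation*}

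\textbf{No constant or linear terms.} Every monomial in $Y_n$ identified above has the shape $b_k(\mu_1) \cdot M$ with $2 \leq k \leq n-1$ and $M$ a nonempty monomial of positive degree in the $b_i(\nu_2)$'s (this positivity is forced since we need the coefficient of $z^{-(n-k)}$ with $n - k \geq 1$, which can only come from $g(z)^j$ with $j \geq 1$). Hence each monomial in $Y_n$ has total degree at least two, so $Y_n$ contains neither a constant term nor any linear term.

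\textbf{Expected obstacle.} Nothing here is conceptually difficult; the entire proof is an exercise in asymptotic bookkeeping. The only mild subtlety is making precise that the asymptotic expansion can be manipulated term-by-term and that the coefficient of $z^{-(n-1)}$ in $b_k(\mu_1)/H_{\nu_2}(z)^{k-1}$ genuinely depends only on $b_1(\nu_2), \ldots, b_{n-k}(\nu_2)$; this follows from the fact that the expansion of $(1+g)^{-(k-1)}$ at order $z^{-(n-k)}$ only involves the first $n-k$ coefficients of $g$. Establishing the quadratic-or-higher degree property for $Y_n$ is the only point requiring any care.
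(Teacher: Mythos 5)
Your proof is correct and follows essentially the same route as the paper: both read off $b_n(\mu_1 \rhd_{\nu_2}\mu_2)$ from the asymptotic expansion of $H_{\mu_1}\circ H_{\nu_2} + H_{\mu_2} - H_{\nu_2}$. The only cosmetic differences are that the paper expands $G_{\nu_2}(z)^{k-1}$ via the moments $m_j(\nu_2)$ and then converts to $b_j(\nu_2)$ using Lemma \ref{cum11} (and cites Proposition \ref{addi} for $n=1,2$), whereas you expand $H_{\nu_2}^{-(k-1)}$ directly in the $b_j(\nu_2)$ and verify the low-order cases by hand; your explicit degree-counting for the absence of constant and linear terms in $Y_n$ is if anything more detailed than the paper's.
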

\begin{proof}
We have the equality 
\begin{equation}
\begin{split}
H_{\mu_1 \rhd_{\nu_2} \mu_2}(z) &= H_{\mu_1} (H_{\nu_2}(z)) + H_{\mu_2}(z) - H_{\nu_2}(z) \\
                   &= b_1(\mu_1) + \sum_{n = 1} ^{\infty} b_{n+1}(\mu_1) G_{\nu_2}(z)^{n} + z + b_1(\mu_2) + \sum_{n = 1}^{\infty} \frac{b_{n+1}(\mu_2)}{z^n} \\ 
                   &= b_1(\mu_1) + \sum_{n = 1} ^{\infty} \frac{b_{n+1}(\mu_1)}{z^n} \Big{(}1 + \sum_{k=1}^{\infty} \frac{m_k(\nu_2)}{z^k} \Big{)}^{n} + z + b_1(\mu_2) + \sum_{n = 1}^{\infty} \frac{b_{n+1}(\mu_2)}{z^n}. \\  
\end{split}
\end{equation} 
We can express $m_k$ as a polynomial of $r_n$ ($1 \leq p \leq k$) from Lemma \ref{cum11}: $m_k(\nu_2) = -b_k(\nu_2) + X_k(b_1(\nu_2), \cdots, b_{k-1}(\nu_2))$, 
where $X_k$ is a polynomial without a constant term or linear terms. Therefore, we have the conclusion.  
\end{proof}

Let $r_n(\nu)$ be the monotone cumulants of $\mu$ and $A_\nu(z) = -\sum_{n=1}^{\infty}\frac{r_n(\nu)}{z^{n-1}}$ be their generating function.  
Let $\{(\mu_t, \nu_t) \}_{t \geq 0}$ be a c-monotone convolution semigroup with $(\mu_0, \nu_0) = (\delta_0, \delta_0)$. Then $\{\nu_t \}_{t \geq 0}$ is a monotone convolution semigroup with $\nu_0 = \delta_0$. We define $\mu:= \mu_1$ and $\nu = \nu_1$. 
 Proposition \ref{asso111} implies that $H_{\mu_{s+t}} = H_{\mu_s} \circ H_{\nu_t} + H_{\mu_t} - H_{\nu_t}$. 
We assume that $m_n(\mu_t)$ is a differentiable function of $t$. 
Differentiating the equality with $s$ formally and putting $s = 0$, we obtain 
\begin{equation}\label{cum13}
 \frac{d}{dt} H_{\mu_t}(z) = A_{(\mu, \nu)}(H_{\nu_t}(z)), 
\end{equation}  
where $A_{(\mu, \nu)}(z) = \frac{d}{dt}H_{\mu_t}(z)|_{t = 0}$. 
We expand $A_{(\mu, \nu)}(z)$ as a formal power series: 
\begin{equation}\label{cum15}
A_{(\mu, \nu)}(z) = -\sum_{n = 1}^{\infty} \frac{r _n (\mu, \nu)}{z^{n-1}}.
\end{equation} 
(\ref{cum13}) is equivalent to 
\begin{equation}\label{cum16}
\frac{d}{dt}G_{\mu_t}(z) = \sum_{n = 1} ^{\infty} r_n(\mu, \nu) G_{\mu_t}(z)^{2} G_{\nu_t}(z)^{n-1} 
\end{equation}  
as a formal power series. Thus there exists a polynomial $M_n$  
for any $n \geq 3$ such that 
\begin{equation}\label{cum17}
\begin{split}
\frac{d}{dt}m_n(\mu_t) &= r_n(\mu, \nu) + M_n(r_{1}(\mu, \nu), \cdots, r_{n-1}(\mu, \nu), m_1(\mu_t), \cdots, m_{n-1}(\mu_t), \\&~~~~~~~~~~~~~~~~~~~~~~~~~ m_1(\nu_t), \cdots, m_{n-2}(\nu_t)). 
\end{split}
\end{equation} 
For $n=1$ and $2$, we can understand that $M_1 = 0$ and that $M_2$ only depends on $r_1(\mu, \nu)$ and $m_1(\mu_t)$. 
Since $m_k(\nu_t)$ is a polynomial of $r_1(\nu),\cdots, r_k(\nu)$, we can prove inductively that $m_n(\mu_t)$ is of the form 
\begin{equation}
m_n(\mu_t) = r_n(\mu, \nu) t + t^2C_n(t, r_1(\mu, \nu), \cdots, r_{n-1}(\mu, \nu), r_1(\nu), \cdots, r_{n-2}(\nu)), 
\end{equation}
where $C_n$ is a polynomial for any $n \geq 3$ and $r_k(\mu)$ is the $k$-th monotone cumulant of $\mu$. From the construction, $C_n$ is a universal polynomial in the sense that $C_n$ does not depend on $\mu$ or $\nu$; $C_n$ is determined only by the definition of c-monotone independence.  
It is easy to show that $m_1(\mu_t) = r_1(\mu, \nu)t$ and $m_2(\mu_t)= r_2(\mu, \nu)t + r_1(\mu,\nu)^2t^2$. 
If we set $t=1$, $r_n(\mu, \nu)$ turns out to be expressed as a polynomial of $m_k(\mu)$ and $m_k(\nu)$ $(1 \leq k \leq n)$. 
This relation can be generalized to any probability measures $\mu,~\nu \in \pmo$ since $C_n$ is universal. 
\begin{definition}\label{momentcumulant}
For probability measures $\mu,~\nu \in \pmo$, we define the c-monotone cumulants $r_n(\mu, \nu)$ ($n \geq 1$) by the equations 
\[
m_n(\mu) = r_n(\mu,\nu) + C_n(1, r_1(\mu, \nu), \cdots, r_{n-1}(\mu, \nu), r_1(\nu), \cdots, r_{n-2}(\nu))
\] 
for $n \geq 3$. For $n = 1,2$, we define $r_1(\mu, \nu) := m_1(\mu)$ and $r_2(\mu, \nu) := m_2(\mu) - m_1(\mu)^2$.  
\end{definition}

Now we prove the power additivity of cumulants. The proof is based on the relation of generating functions in Proposition \ref{asso111}. We note, however, that we can give a proof without the use of generating functions 
as shown in \cite{H-S}.  
\begin{theorem}
$r_n((\mu, \nu)^{\rhd N}) = N r_n(\mu, \nu)$ holds for $\mu, ~\nu \in \pmo$ and $N \in \nat$. 
\end{theorem}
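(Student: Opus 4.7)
The plan is to prove the identity by induction on $n$, viewing the defining moment-cumulant relation as the $t=1$ snapshot of a formal c-monotone convolution semigroup $\{(\mu_t,\nu_t)\}_{t \geq 0}$ with $(\mu_0,\nu_0) = (\delta_0,\delta_0)$ and $(\mu_1,\nu_1) = (\mu,\nu)$ built out of the very cumulants we want to understand.

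For the base cases $n = 1, 2$, I would use $r_1(\mu,\nu) = m_1(\mu) = -b_1(\mu)$ and $r_2(\mu,\nu) = m_2(\mu) - m_1(\mu)^2 = -b_2(\mu)$ together with the additivity $b_k(\mu_1 \rhd_{\nu_2} \mu_2) = b_k(\mu_1) + b_k(\mu_2)$ for $k=1,2$ stated in the lemma immediately preceding Definition \ref{momentcumulant}; iterating this binary additivity over the $N$ factors of $(\mu,\nu)^{\rhd N}$ gives $r_k((\mu,\nu)^{\rhd N}) = N r_k(\mu,\nu)$ directly.

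For $n \geq 3$, I would construct the semigroup $\{(\mu_t,\nu_t)\}_{t \geq 0}$ at the level of formal moment sequences by integrating the ODE (\ref{cum13}) along with its monotone analogue for $\nu_t$, using $A_{(\mu,\nu)}(z) = -\sum_n r_n(\mu,\nu) z^{1-n}$ and $A_\nu$ as generators. Associativity of the c-monotone convolution (Theorem \ref{assoc71}) together with the semigroup property in $t$ forces $(\mu_N,\nu_N) = (\mu,\nu)^{\rhd N}$ for positive integers $N$, so in particular $m_n(\mu^{(N)}) = m_n(\mu_N)$. The crucial ingredient is a time-rescaling symmetry: replacing the generators $(r_k(\mu,\nu), r_k(\nu))$ by $(\lambda r_k(\mu,\nu), \lambda r_k(\nu))$ in the ODE produces a new solution equal to $\mu_{\lambda t}$, and comparing the polynomial formula for the moments in both parametrisations at $t = 1$ yields the scaling identity
\begin{equation*}
C_n\bigl(1,\, \lambda r_1(\mu,\nu),\ldots,\, \lambda r_{n-2}(\nu)\bigr) = \lambda^2 \, C_n\bigl(\lambda,\, r_1(\mu,\nu),\ldots,\, r_{n-2}(\nu)\bigr).
\end{equation*}

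Applying the polynomial formula at $t = N$ gives $m_n(\mu_N) = N r_n(\mu,\nu) + N^2 C_n(N, r_1(\mu,\nu),\ldots)$, which by the scaling identity equals $N r_n(\mu,\nu) + C_n(1, N r_1(\mu,\nu),\ldots, N r_{n-2}(\nu))$. On the other hand, applying Definition \ref{momentcumulant} to the pair $(\mu^{(N)}, \nu^{\rhd N}) = (\mu,\nu)^{\rhd N}$ and invoking the induction hypothesis $r_k((\mu,\nu)^{\rhd N}) = N r_k(\mu,\nu)$ for $k < n$ together with the known monotone additivity $r_k(\nu^{\rhd N}) = N r_k(\nu)$ from \cite{H-S} expresses $m_n(\mu^{(N)})$ as $r_n((\mu,\nu)^{\rhd N}) + C_n(1, N r_1(\mu,\nu),\ldots, N r_{n-2}(\nu))$. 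Equating the two expressions and cancelling the identical polynomial contributions leaves $r_n((\mu,\nu)^{\rhd N}) = N r_n(\mu,\nu)$. The main obstacle I anticipate is a clean proof of the scaling identity for $C_n$: rather than unwinding its monomial structure by hand, the cleanest route is to observe that $C_n$ is determined uniquely by the formal ODE and then invoke the time-rescaling symmetry directly; a secondary point of care is treating $\{(\mu_t,\nu_t)\}$ as a semigroup of formal moment sequences rather than genuine probability measures, since an arbitrary $(\mu,\nu)$ need not be $\rhd$-infinitely divisible.
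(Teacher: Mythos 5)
Your proposal is correct, and its core coincides with the paper's proof: both arguments rest on building the formal semigroup $m_n(\mu,\nu,t)$ generated by $A_{(\mu,\nu)}$ and $A_\nu$, establishing the functional equation $H_{(\mu,\nu)}(t+s,z)=H_{(\mu,\nu)}(t,H_\nu(s,z))-H_\nu(s,z)+H_{(\mu,\nu)}(s,z)$ by uniqueness of formal power-series solutions of the ODE system, and thereby identifying $m_n(\mu,\nu,N)$ with $m_n(\mu_N)$, where $(\mu_N,\nu^{\rhd N})=(\mu,\nu)^{\rhd N}$. The two proofs diverge only in how additivity is then extracted. You induct on $n$ and invoke the time-rescaling identity $C_n(1,\lambda r_1(\mu,\nu),\dots,\lambda r_{n-2}(\nu))=\lambda^2 C_n(\lambda,r_1(\mu,\nu),\dots,r_{n-2}(\nu))$ to cancel the polynomial tails; the paper avoids both the induction and the scaling lemma by using power associativity to write $m_n(\mu,\nu,MN)=m_n(\mu_N,\nu^{\rhd N},M)$ and comparing the coefficients of $M$ on the two sides, which isolates $r_n(\mu_N,\nu^{\rhd N})=Nr_n(\mu,\nu)$ in one stroke. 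Your scaling identity is genuinely correct (it is exactly the statement that multiplying both generators by $\lambda$ reparametrises time by $\lambda$, and $C_n$ is uniquely determined by the recursion), so both endgames work; the paper's is slightly more economical, while yours makes the homogeneity of $C_n$ explicit. The one phrase to tighten is that ``associativity together with the semigroup property forces $(\mu_N,\nu_N)=(\mu,\nu)^{\rhd N}$'': the semigroup property of the formal family is precisely the displayed functional equation and must itself be proved from the ODE (same vector field, same initial value, uniqueness of formal solutions), as the paper does; it does not come for free from Theorem \ref{assoc71}.
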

\begin{proof}
We define $\mu_N$ by $(\mu_N, \nu^{\rhd N}) = (\mu, \nu)^{\rhd N}$. 
We define 
\begin{equation}\label{cum22}
 m_n(\nu, t) := \sum_{k =1} ^{n} \sum_{1 = i_0 < i_1 < \cdots < i_{k-1} < i_k = n +1} 
\frac{t^k}{k!}\prod_{l = 1} ^{k} i_{l-1} r_{i_l - i_{l-1}}(\nu) 
\end{equation}
and 
\begin{equation}\label{cum18}
m_n(\mu,\nu, t) := r_n(\mu, \nu)t + t^2C_n(t, r_1(\mu, \nu), \cdots, r_{n-1}(\mu, \nu), r_1(\nu), \cdots, r_{n-2}(\nu)), 
\end{equation}
which may not be moments of a probability measure for general $t$. It is noted that $m_n(\mu, \nu, 1) = m_n(\mu)$ and $m_n(\mu, \mu, t) = m_n(\mu, t)$. 
The latter equality comes from the relation $(\mu, \mu) \rhd (\nu, \nu) = (\mu \rhd \nu, \mu \rhd \nu)$. We shall show that $m_n(\mu,\nu, N) = m_n(\mu_N, \nu^{\rhd N}, 1) (= m_n(\mu_N))$ for any $n$, $N \geq 1$. 
We define formal power series $A_{(\mu, \nu)}(z)$ using (\ref{cum15}), $H_{(\mu, \nu)}(t, z):= \Big{(}\sum_{n=0} ^{\infty}\frac{m_n (\mu, \nu, t)}{z^{n+1}} \Big{)}^{-1}$ 
and $H_\nu(t, z) :=  \Big{(}\sum_{n=0} ^{\infty}\frac{m_n (\nu, t)}{z^{n+1}} \Big{)}^{-1}$. 
Now we prove the equalities 
\begin{align}
&H_{(\mu, \nu)}(t+s,z) = H_{(\mu, \nu)}(t, H_\nu(s, z)) - H_\nu(s, z) + H_{(\mu, \nu)}(s, z) \label{eq50},  \\
&H_\nu(t+s,z) = H_\nu(t, H_\nu(s, z)),  
\end{align}
as formal power series. To do so, we define $K_s^{1}(t,z) := H_{(\mu, \nu)}(t, H_\nu(s, z)) - H_\nu(s, z) + H_{(\mu, \nu)}(s, z)$ and $K_s^2(t,z):=H_\nu(t,H_\nu(s,z))$.  
With the arguments just before Definition \ref{momentcumulant}, we have
\begin{equation}\label{cum23}
\begin{split}
&\frac{d}{dt} H_{(\mu, \nu)}(t,z) = A_{(\mu, \nu)}(H_{\nu}(t, z)), \\
&\frac{d}{dt} H_{\nu}(t,z) = A_{\nu}(H_{\nu}(t, z)). 
\end{split}
\end{equation} 
The latter equality follows from the former by setting $\mu = \nu$. Therefore it is clear that $(H_{(\mu, \nu)}(t+s, z), H_\nu(t+s, z))$ satisfies the two-dimensional complex differential equation 
 \begin{equation}
\begin{split}
&\frac{d}{dt} H_{(\mu, \nu)}(t+s,z) = A_{(\mu, \nu)}(H_{\nu}(t+s, z)), \\
&\frac{d}{dt} H_{\nu}(t+s,z) = A_{\nu}(H_{\nu}(t+s, z)) 
\end{split}
\end{equation} 
for a fixed $s$ with the initial value $(H_{(\mu, \nu)}(s, z), H_\nu(s, z))$. 
On the other hand,  (\ref{cum23}) implies that 
 \begin{equation}
\begin{split}
&\frac{d}{dt} K_s^{1}(t, z) = A_{(\mu,\nu)}(K^2_s(t,z)), \\
&\frac{d}{dt} K_{s}^2 (t,z) = A_{\nu}(K_{s}^2 (t, z)) 
\end{split}
\end{equation} 
with the initial value $(K_s^1(0, z), K_s^2(0, z)) = (H_{(\mu, \nu)}(s,z), H_\nu(s,z))$. Thus both $(H_{(\mu, \nu)}(t+s, z), H_\nu(t+s, z))$ and ($K_s^1(t,z), K_s^2(t,z)$) satisfy the same differential equation with the same initial value. 
It is not difficult to prove the uniqueness of solution of an ordinary differential equation as a formal power series; therefore $(H_{(\mu, \nu)}(t+s, z), H_\nu(t+s, z))=(K_s^1(t,z), K_s^2(t,z))$. 
Setting $t = s = 1$, we obtain $m_n(\mu, \nu, 2) = m_n(\mu_2, \nu \rhd \nu, 1) = m_n(\mu_2)$. Inductively, we can show that $m_n(\mu, \nu, N) = m_n(\mu_N)$. By using the (power) associativity of the c-monotone convolution, 
we also obtain $m_n(\mu, \nu, MN) = m_n(\mu_N, \nu^{\rhd N}, M)$ for $M, N \in \nat$. 
Then we have 
\begin{equation}
\begin{split}
&MNr_n(\mu, \nu) + M^2N^2C_n(MN, r_1(\mu, \nu), \cdots, r_{n-1}(\mu, \nu), r_1(\nu), \cdots, r_{n-2}(\nu)) \\ &~= Mr_n(\mu_N, \nu^{\rhd N}) + M^2C_n(M, r_1(\mu_N, \nu^{\rhd N}), \cdots, r_{n-1}(\mu_N, \nu^{\rhd N}), r_1(\nu^{\rhd N}), \cdots, r_{n-2}(\nu^{\rhd N})). 
\end{split}
\end{equation}
Since the coefficients of $M$ coincide, it holds that $r_n(\mu_N,\nu^{\rhd N}) = Nr_n(\mu, \nu)$. 
\end{proof}

Before closing this subsection, we note some remarks. 
First we summarize the basic relations between generating functions:  
\begin{gather}
\frac{d}{dt}H_{(\mu,\nu)}(t,z) = A_{(\mu, \nu)}(H_\nu(t, z)), ~H_{(\mu, \nu)}(1, z) = H_\mu(z), ~H_{(\mu, \nu)}(0, z) = z, \\
\frac{d}{dt}H_\nu(t, z) = A_\nu(H_\nu(t,z)), ~H_\nu(1, z) = H_\nu(z),~ H_\nu(0, z) = z.   
\end{gather}
These relations are analogous to 
\begin{gather}
H_{\mu}(z) = z - \phi_{(\mu, \nu)}(H_{\nu}(z)), \\ 
H_{\nu}(z) = z - \phi_{\nu}(H_{\nu}(z)),   
\end{gather}
which are basic  for c-free convolutions. 
We note that monotone cumulants and Boolean cumulants are special cases of c-monotone cumulants: $A_{(\mu, \mu)}$ is a generating function of monotone cumulants and $A_{(\mu, \delta_0)}$ is a generating function of 
Boolean cumulants. 

Second, we note that c-monotone cumulants $r_n(\mu,\nu)$ satisfy the following conditions. 
\begin{itemize}
\item[(K1')] Power additivity: $r_n((\mu,\nu)^{\rhd N})= Nr_n(\mu,\nu)$.
\item[(K2)] Homogeneity: for any $\lambda > 0$ and any $n$,  
\begin{equation}
r_n(D_{\lambda}\mu, D_\lambda \nu ) = \lambda ^n r_n(\mu, \nu), 
\end{equation}
where $D_\lambda$ is defined by $(D_\lambda \mu)(B) = \mu(\lambda^{-1}B)$ for any Borel set $B$.  
\item[(K3)] For any $n$, there exists a universal polynomial $Q_n$ (in the sense that $Q_n$ does not depend on $\mu$ or $\nu$) of $2n-2$ variables such that 
\begin{equation}
r_n(\mu, \nu) = m_n(\mu) + Q_n(m_p(\mu),m_q(\nu) ~(1 \leq p,q \leq n-1)). 
\end{equation}
\end{itemize}
The usual additivity $r_n((\mu_1,\nu_1)\rhd(\mu_2,\nu_2)) = r_n(\mu_1,\nu_1)+ r_n(\mu_2,\nu_2)$ does not hold due to the non-commutativity of the convolution. 
We can easily prove the uniqueness of cumulants satisfying (K1'), (K2) and (K3) (see \cite{H-S} and also Section \ref{Cum} of the present paper).

\subsection{Moment-cumulant formula and monotone partitions}
Next we show a combinatorial moment-cumulant formula for the c-monotone convolution. 
Let $\lncpn$ be the set of all linearly ordered non-crossing partitions defined by 
\begin{equation}
\lncpn := \{(\pi, \lambda): \pi \in \ncpn, \lambda \text{~is a linear ordering of the blocks of $\pi$} \}.   
\end{equation}

We prove that the $n$-th moment can be described as  
\begin{equation}\label{eq200}
m_n(\mu) = \sum_{(\pi, \lambda) \in \mpn} \frac{1}{|\pi|!}\Big{(} \prod_{\substack{V \in \pi, \\ V: \text{\normalfont ~outer}}}r_{|V|}(\mu, \nu) \Big{)} \Big{(}  \prod_{\substack{V \in \pi, \\ V: \text{\normalfont ~inner}}}r_{|V|}(\nu) \Big{)}.
\end{equation}
This formula is analogous to the c-free formula    
\begin{equation}\label{eq201}
m_n(\mu) = \sum_{(\pi, \lambda) \in \lncpn} \frac{1}{|\pi|!}\Big{(} \prod_{\substack{V \in \pi, \\ V: \text{~outer}}}R_{|V|}(\mu, \nu) \Big{)} \Big{(}  \prod_{\substack{V \in \pi, \\ V: \text{~inner}}}R_{|V|}(\nu) \Big{)}
\end{equation}
if we impose the linear order structure. Clearly the role of the linear order structure is trivial in (\ref{eq201}), but crucial in (\ref{eq200}). 

When we defined c-monotone cumulants, we used the Taylor expansion of the equality $\frac{d}{dt} H_{\mu_t}(z) = A_{(\mu, \nu)}(H_{\nu_t}(z))$  to prove 
the power additivity $r_n((\mu, \nu)^{\rhd N}) = N r_n(\mu, \nu)$. 
By contrast, the following is of use in proving the moment-cumulant formula.  

\begin{proposition}\label{Receq1}
Let $r_n(\mu)$ and $r_n(\mu, \nu)$ be the momotone cumulants and c-monotone cumulants respectively, and let $m_n(\mu,\nu, t)$ be the quantity 
defined in (\ref{cum18}). Then we have the recursive differential equations 
\begin{equation}\label{receq1}
\begin{split}
\frac{d}{dt}m_n(\mu, \nu, t) &= \sum_{k=0}^{n-1}(k+1)r_{n-k}(\nu)m_k(\mu, \nu, t) - \sum_{k = 0}^{n-1}\sum_{l=0}^{k} r_{n-k}(\nu)m_l(\mu, \nu, t)m_{k-l}(\mu, \nu, t) \\ &~~~~~~~~~~~+ \sum_{k = 0}^{n-1}\sum_{l=0}^{k} r_{n-k}(\mu, \nu)m_l(\mu, \nu, t)m_{k-l}(\mu, \nu, t). 
\end{split}
\end{equation}
\end{proposition}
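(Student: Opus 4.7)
The plan is to reformulate the defining ODE for $H_{(\mu,\nu)}(t,z)$ into an \emph{alternative} ODE that explicitly exposes both cumulant generating functions $A_\nu$ and $A_{(\mu,\nu)}$, rather than only $A_{(\mu,\nu)}$ composed with $H_\nu$. The recursion (\ref{receq1}) mixes $r_{n-k}(\nu)$ and $r_{n-k}(\mu,\nu)$, so the straightforward expansion of $\frac{d}{dt}G_{(\mu,\nu)} = -G_{(\mu,\nu)}^2\,A_{(\mu,\nu)}(H_\nu(t,z))$ alone cannot produce it: one needs to decouple $A_{(\mu,\nu)}\circ H_\nu$ using a flow identity. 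The alternative ODE comes from differentiating the semigroup identity (\ref{eq50}) in the ``other'' variable.

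First, I would invoke the formal-power-series semigroup identity $H_{(\mu,\nu)}(t+s,z) = H_{(\mu,\nu)}(t,H_\nu(s,z)) - H_\nu(s,z) + H_{(\mu,\nu)}(s,z)$ established inside the proof of Theorem \ref{assoc71} (the left-hand side is $K_s^1(t,z)$ in the notation there). Since both sides are polynomials in $s$ coefficient-wise in $z^{-1}$, one may differentiate in $s$ at $s=0$, using $H_\nu(0,z)=z$, $H_{(\mu,\nu)}(0,z)=z$, $\partial_s H_\nu(s,z)|_{s=0}=A_\nu(z)$ and $\partial_s H_{(\mu,\nu)}(s,z)|_{s=0}=A_{(\mu,\nu)}(z)$, to obtain the alternative ODE
\begin{equation*}
\frac{d}{dt}H_{(\mu,\nu)}(t,z) \;=\; \partial_z H_{(\mu,\nu)}(t,z)\cdot A_\nu(z) \;-\; A_\nu(z) \;+\; A_{(\mu,\nu)}(z).
\end{equation*}

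Next I would convert this to an ODE for $G_{(\mu,\nu)}(t,z)=1/H_{(\mu,\nu)}(t,z)=\sum_{k\geq 0}m_k(\mu,\nu,t)z^{-(k+1)}$. Applying $\partial_t G = -G^2\,\partial_t H$ and $\partial_z H = -G^{-2}\partial_z G$ on both sides gives
\begin{equation*}
\frac{d}{dt}G_{(\mu,\nu)}(t,z) \;=\; \partial_z G_{(\mu,\nu)}(t,z)\cdot A_\nu(z) \;+\; G_{(\mu,\nu)}(t,z)^2\bigl(A_\nu(z)-A_{(\mu,\nu)}(z)\bigr).
\end{equation*}

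Finally I would expand the right-hand side in formal powers of $z^{-1}$ using $A_\nu(z)=-\sum_{j\geq 1}r_j(\nu)z^{-(j-1)}$, $A_{(\mu,\nu)}(z)=-\sum_{j\geq 1}r_j(\mu,\nu)z^{-(j-1)}$, $\partial_z G_{(\mu,\nu)}=-\sum_{k\geq 0}(k+1)m_k(\mu,\nu,t)z^{-(k+2)}$ and $G_{(\mu,\nu)}^2=\sum_{k\geq 0}\bigl(\sum_{l=0}^k m_l(\mu,\nu,t)m_{k-l}(\mu,\nu,t)\bigr)z^{-(k+2)}$, and read off the coefficient of $z^{-(n+1)}$. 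The three products give precisely the three sums on the right-hand side of (\ref{receq1}); the left-hand side yields $\frac{d}{dt}m_n(\mu,\nu,t)$.

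The only real subtlety is justifying the $s$-differentiation of the semigroup identity at the formal power series level. This is legitimate because, as shown in the proof of Theorem \ref{assoc71}, each coefficient of $z^{-k}$ on both sides is a polynomial in $s$ and $t$, so formal differentiation in $s$ (at $s=0$) is permitted coefficient-wise. Everything else is bookkeeping with generating series, so I do not anticipate a substantive obstacle.
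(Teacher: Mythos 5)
Your proof is correct and follows essentially the same route as the paper: differentiate the semigroup identity (\ref{eq50}) in $s$ at $s=0$ to get the PDE $\partial_t H_{(\mu,\nu)} = A_\nu\,\partial_z H_{(\mu,\nu)} - A_\nu + A_{(\mu,\nu)}$, pass to $G_{(\mu,\nu)}=1/H_{(\mu,\nu)}$, and extract the coefficient of $z^{-(n+1)}$. The only slip is attributional: the identity (\ref{eq50}) and the notation $K_s^1$ come from the proof of the additivity theorem $r_n((\mu,\nu)^{\rhd N})=Nr_n(\mu,\nu)$ in Section \ref{condmonocum1}, not from Theorem \ref{assoc71}.
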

\begin{remark}
In the special case $\mu = \nu$, the above equation becomes $\frac{d}{dt}m_n(\mu, t) = \sum_{k=0}^{n-1}(k+1)r_{n-k}(\mu)m_k(\mu, t)$ which has been obtained in the case of monotone convolutions \cite{H-S}. Moreover, we have 
\begin{equation} 
\frac{d}{dt}m_n(\mu, \delta_0, t) =  \sum_{k = 0}^{n-1}\sum_{l=0}^{k} r_{n-k}(\mu, \delta_0)m_l(\mu, \delta_0, t)m_{k-l}(\mu, \delta_0, t),  
\end{equation}
since $r_n(\delta_0)=0$. In this case $r_n(\mu, \delta_0)$ is an $n$-th Boolean cumulant. It might be interesting that this differential equation describes the structure of interval partitions. 
\end{remark}
\begin{proof}
We use the same notation as used in the subsection \ref{condmonocum1}. 
Differentiating the equality (\ref{eq50}) with $s$ and putting $s = 0$ we obtain 
\begin{equation}
\frac{\partial H_{(\mu, \nu)} }{\partial t} (t, z) = A_\nu(z)\frac{\partial H_{(\mu, \nu)} }{\partial z} (t, z) - A_\nu(z) + A_{(\mu, \nu)}(z). 
\end{equation}
We define $G_{(\mu, \nu)} (t, z) := \frac{1}{H_{(\mu, \nu)}(t, z)}$. 
Then we get the equality 
\begin{equation}\label{eq101}
\frac{\partial G_{(\mu, \nu)} }{\partial t} (t, z) = A_\nu(z)\frac{\partial G_{(\mu, \nu)} }{\partial z} (t, z) + G_{(\mu, \nu)} (t, z)^2 A_\nu(z) - G_{(\mu, \nu)} (t, z)^2 A_{(\mu, \nu)}(z). 
\end{equation}
(\ref{receq1}) follows from the expansion of (\ref{eq101}) in formal power series. 
\end{proof}

We say a block $V$ in a partition $\pi \in \pn$ is of \textit{interval type} if there exist $j$ and $k$ ($1 \leq j \leq n$, $0 \leq k \leq n - j$) such that $V = \{j, j+1, \cdots, j + k\}$.   
\begin{theorem}
The moment-cumulant formula for c-monotone independence is  
\begin{equation}\label{m-c11}
m_n(\mu) = \sum_{(\pi, \lambda) \in \mpn} \frac{1}{|\pi|!}\Big{(} \prod_{\substack{V \in \pi, \\ V: \text{\normalfont ~outer}}}r_{|V|}(\mu, \nu) \Big{)} \Big{(}  \prod_{\substack{V \in \pi, \\ V: \text{\normalfont ~inner}}}r_{|V|}(\nu) \Big{)}.
\end{equation}
\end{theorem}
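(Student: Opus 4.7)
The plan is to show that the right-hand side of (\ref{m-c11}), once twisted by an auxiliary parameter $t$, satisfies the same formal ODE as $m_n(\mu,\nu,t)$ with the same initial condition, and conclude by uniqueness. Concretely, I would introduce
\[
\widetilde m_n(t) := \sum_{(\pi,\lambda)\in \mpn} \frac{t^{|\pi|}}{|\pi|!} \Bigl(\prod_{\substack{V \in \pi \\ V \text{ outer}}} r_{|V|}(\mu,\nu)\Bigr) \Bigl(\prod_{\substack{V \in \pi \\ V \text{ inner}}} r_{|V|}(\nu)\Bigr),
\]
with $\widetilde m_0(t):=1$, noting $\widetilde m_n(0)=0$ for $n\ge 1$, which matches $m_n(\mu,\nu,0)=0$ from (\ref{cum18}). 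Since the right-hand side of (\ref{receq1}) for $\tfrac{d}{dt}m_n$ depends only on $m_0,\ldots,m_{n-1}$, integrating term by term in $t$ determines a unique formal power series solution with the prescribed initial data; it therefore suffices to verify that $\widetilde m_n(t)$ satisfies (\ref{receq1}).

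The core combinatorial step is a peeling bijection. In any $(\pi,\lambda)\in\mpn$, the $\lambda$-maximal block $V^*$ admits no inner block (any inner block would require a strictly larger label), so in a non-crossing partition $V^*$ must be a genuine interval $\{j, j+1, \ldots, j+s-1\}$ in $\{1,\ldots,n\}$. The map $(\pi,\lambda)\mapsto(V^*, (\pi\setminus\{V^*\}, \lambda|_{\pi\setminus V^*}))$ is then a bijection from $\mpn$ onto pairs consisting of an interval $V^*$ and a monotone partition $(\pi_0,\lambda_0)$ on $\{1,\ldots,n\}\setminus V^*$. Because $V^*$ is an interval, its removal preserves the inner/outer classification of every block of $\pi_0$; moreover, $V^*$ is outer in $\pi$ iff no block of $\pi_0$ straddles the gap at position $j$, equivalently iff $\pi_0$ splits as the union of a monotone partition on $\{1,\ldots,j-1\}$ and one on $\{j+s,\ldots,n\}$.

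Differentiating $\widetilde m_n(t)$ produces the factor $|\pi|$ that exactly absorbs the choice of peeled-off block, so after applying the bijection one sums $\tfrac{t^{|\pi_0|}}{|\pi_0|!}$ times the block-product of $\pi_0$ over $(\pi_0,\lambda_0)$, with an additional weight $r_s(\mu,\nu)$ in the split (outer) case and $r_s(\nu)$ in the straddle (inner) case. In the split case the shuffle identity $\binom{|\pi_L|+|\pi_R|}{|\pi_L|}\cdot\tfrac{1}{(|\pi_L|+|\pi_R|)!}=\tfrac{1}{|\pi_L|!\,|\pi_R|!}$ counts the number of linear orderings of $\pi_0$ restricting to given orderings $\lambda_L,\lambda_R$ on the two halves, producing the clean product $\widetilde m_l(t)\,\widetilde m_r(t)$ with $l=j-1$, $r=n-s-j+1$. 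Writing the straddle case as (total over all $n-s+1$ positions $j$) minus (split case), both weighted by $r_s(\nu)$, and re-indexing $k=n-s$, one recovers the three summands on the right-hand side of (\ref{receq1}) verbatim. Evaluating at $t=1$ then gives $m_n(\mu)=\widetilde m_n(1)$, which is (\ref{m-c11}).

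The main obstacle is the bookkeeping in the peeling bijection: one must verify carefully that the $\lambda$-maximum block is always a genuine interval, that peeling it off yields a well-defined bijection whose inverse re-attaches a new maximum label without violating monotonicity, that the inner/outer status of blocks in $\pi_0$ is invariant under insertion of $V^*$ (so that the block weights multiply cleanly), and that the straddle-versus-split dichotomy on gaps matches exactly the three signed summands of (\ref{receq1}).
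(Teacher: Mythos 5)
Your proposal is correct and follows essentially the same route as the paper's proof: both introduce the $t$-weighted sum $\sum_{(\pi,\lambda)}\frac{t^{|\pi|}}{|\pi|!}(\cdots)$, peel off the $\lambda$-maximal block (which is of interval type), split into the outer case (handled via the shuffle coefficient $\frac{(|\sigma|+|\tau|)!}{|\sigma|!\,|\tau|!}$) and the inner case (computed as the unrestricted sum over the $k+1$ positions minus the outer contribution), match the result against the recursion (\ref{receq1}) of Proposition \ref{Receq1}, and set $t=1$. The only cosmetic difference is that you phrase the conclusion as uniqueness of the formal ODE solution while the paper runs an induction on $n$ with term-by-term integration, which amounts to the same argument.
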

\begin{proof} 
This proof gives the combinatorial meaning of the differential equations (\ref{receq1}).  
The claim is proved by induction on $n$: assume that the formula 
\begin{equation}
m_n(\mu,\nu,t) = \sum_{(\pi, \lambda) \in \mpn} \frac{t^{|\pi|}}{|\pi|!}\Big{(} \prod_{\substack{V \in \pi, \\ V: \text{\normalfont ~outer}}}r_{|V|}(\mu, \nu) \Big{)} \Big{(}  \prod_{\substack{V \in \pi, \\ V: \text{\normalfont ~inner}}}r_{|V|}(\nu) \Big{)}
\end{equation}
holds for $1 \leq n \leq N$, where $m_n(\mu, \nu, t)$ is defined in (\ref{cum18}).    
 Let $\pi = \{V_1 < \cdots < V_{|\pi|} \}$ denote a monotone partition $(\pi, \lambda) \in \mathcal{M}(N+1)$; this notation means that a subscript of a block itself 
expresses the linear ordering. Note that $V_{|\pi|}$ is always a block of interval type. Let $k$ be defined as $|V_{|\pi|}| = N+1-k$ ($0 \leq k \leq N$). 
Two cases are possible: (1) $V_{|\pi|}$ is outer; (2) $V_{|\pi|}$ is inner. 

(1) If $V_{|\pi|}$ is an outer block, $\pi$ is of such a form as 
\begin{center}
\includegraphics[width = 5cm]{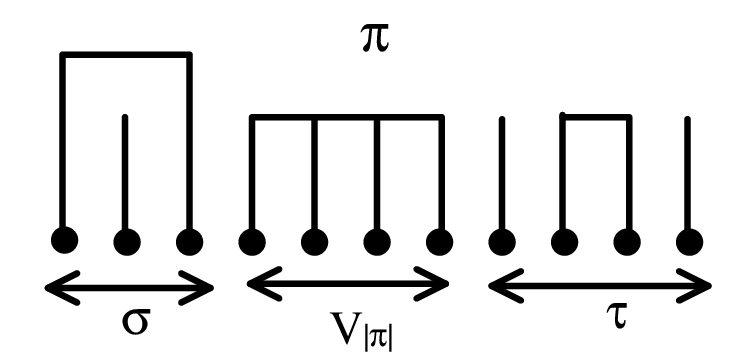} 
\end{center}
where $\sigma $ and $\tau$ are arbitrary non-crossing partitions with $\sigma \in \mathcal{NC}(l)$ and $\tau \in \mathcal{NC}(k-l)$ ($0 \leq l \leq k$). 
We understand that $\sigma = \emptyset$ for $l=0$ and $\tau = \emptyset$ for $l=k$. Next we need to consider the linear orderings of $\sigma$ and $\tau$. A linear ordering of the blocks of $\sigma \cup \tau$ can be described by distributing natural numbers $\{1, \cdots, |\sigma| + |\tau| \}$ to them such that $\sigma$ and $\tau$, equipped with the natural numbers, become monotone partitions. Once $\sigma$ and $\tau$ are fixed, there exist $\frac{(|\sigma| + |\tau|)!}{|\sigma|! |\tau|!}$ ways to divide the set 
$\{1, \cdots, |\sigma| + |\tau| \}$ into two subsets $C$ and $D$ with $|C| = |\sigma|$ and $|D| = |\tau|$. After such a division, we can choose linear orderings of $\sigma$ from $C$ and $\tau$ from $D$ independently with each other, and hence, we can take arbitrary 
$(\sigma,\rho) \in \mathcal{M}(l)$ and $(\tau,\mu) \in \mathcal{M}(k-l)$. The above arguments imply that 
\begin{equation}
\begin{split}
&\sum_{\substack{(\pi, \lambda) \in \mathcal{M}(N+1); \\ V_{|\pi|}: \text{~outer}}} \frac{t^{|\pi|}}{|\pi|!}\Big{(} \prod_{\substack{V \in \pi, \\ V:  \text{~outer}}}r_{|V|}(\mu, \nu) \Big{)} \Big{(}  \prod_{\substack{V \in \pi, \\ V: \text{~inner}}}r_{|V|}(\nu) \Big{)} \\
& = \sum_{k = 0} ^N \sum_{l = 0}^k \sum_{\substack{(\sigma, \rho) \in \mathcal{M}(l) \\ (\tau, \mu) \in \mathcal{M}(k-l)}} 
\frac{(|\sigma| + |\tau|)!}{|\sigma|! |\tau|!} \frac{t^{|\sigma| + |\tau| + 1} r_{N+1-k}(\mu, \nu)}{(|\sigma| + |\tau| + 1)!}  \Big{(} \prod_{\substack{V \in \sigma \cup \tau, \\ V: \text{~outer}}}r_{|V|}(\mu, \nu) \Big{)} \Big{(}  \prod_{ \substack{ V \in \sigma \cup \tau, \\ V: \text{~inner}}}r_{|V|}(\nu) \Big{)} \\
&=  \sum_{k = 0} ^N \sum_{l = 0}^k r_{N+1-k}(\mu, \nu) \int_0 ^t m_l(\mu, \nu, s)m_{k-l}(\mu, \nu, s) ds, 
\end{split}
\end{equation} 
where we have used the assumption of induction in the last line. 

(2) Next we consider the case where $V_{|\pi|}$ is inner. Since sums with $V_{|\pi|}$ inner blocks are difficult to compute directly, first we sum over all monotone partitions and later we remove the redundant sums, i.e., the sums over monotone partitions such that $V_{|\pi|}$ are outer. When $|V_{|\pi|}| = N+1-k$, there are $k + 1$ ways to choose a position of $V_{|\pi|}$ as a subset of $\{1,\cdots, N\}$. After the choice of $V_{|\pi|}$, we can take arbitrary monotone partition of $\mathcal{M}(k)$ as if the highest block $V_{|\pi|}$ were absent. The prototype of this idea appeared in \cite{Saigo} and was used in \cite{H-S}. The above arguments then amount to 
\begin{equation}
\begin{split}
&\sum_{\substack{(\pi, \lambda) \in \mathcal{M}(N+1); \\ V_{|\pi|}: \text{~inner}}} \frac{t^{|\pi|}}{|\pi|!}\Big{(} \prod_{\substack{V \in \pi, \\ V:  \text{~outer}}}r_{|V|}(\mu, \nu) \Big{)} \Big{(}  \prod_{\substack{V \in \pi, \\ V: \text{~inner}}}r_{|V|}(\nu) \Big{)} \\
& = \sum_{k = 0} ^N (k+1)r_{N+1-k}(\nu) \sum_{(\sigma, \rho) \in \mathcal{M}(k)}\frac{t^{|\sigma| + 1}}{(|\sigma| + 1)!} 
\Big{(} \prod_{\substack{V \in \sigma, \\ V: \text{~outer}}}r_{|V|}(\mu, \nu) \Big{)} \Big{(}  \prod_{\substack{V \in \sigma, \\ V:  \text{~inner}}}r_{|V|}(\nu) \Big{)} \\ 
&~~~~~- \sum_{k = 0} ^N \sum_{l = 0}^k \sum_{\substack{(\sigma, \rho) \in \mathcal{M}(l) \\ (\tau, \lambda) \in \mathcal{M}(k-l)}} 
\frac{(|\sigma| + |\tau|)!}{|\sigma|! |\tau|!} \frac{t^{|\sigma| + |\tau| + 1} r_{N+1-k}(\nu)}{(|\sigma| + |\tau| + 1)!}  \Big{(} \prod_{\substack{V \in \sigma \cup \tau, \\ V: \text{~outer}}}r_{|V|}(\mu, \nu) \Big{)} \Big{(}  \prod_{\substack{V \in \sigma \cup \tau, \\ V: \text{~inner}}}r_{|V|}(\nu) \Big{)} \\
&= \sum_{k = 0} ^N (k+1)r_{N+1-k}(\nu)\int_0 ^t m_k(\mu, \nu, s)ds - \sum_{k = 0} ^N \sum_{l = 0}^k r_{N+1-k}(\nu) \int_0 ^t m_l(\mu, \nu, s)m_{k-l}(\mu, \nu, s) ds.  
\end{split}
\end{equation}

Combining the results from (1), (2) and Proposition \ref{Receq1}, we have 
\begin{equation}
m_{N + 1}(\mu, \nu, t) = \sum_{(\pi, \lambda) \in \mathcal{M}(N+1)} \frac{t^{|\pi|}}{|\pi|!}\Big{(} \prod_{\substack{V \in \pi, \\ V: \text{~outer}}}r_{|V|}(\mu, \nu) \Big{)} \Big{(}  \prod_{\substack{V \in \pi, \\ V: \text{~inner}}}r_{|V|}(\nu) \Big{)}.        
\end{equation}
\end{proof}
\begin{example}
The moment-cumulant formula for $n = 1, 2, 3, 4$ is calculated as 
\begin{equation*}
\begin{split}
&m_1(\mu) = r_1(\mu, \nu), \\ 
&m_2(\mu) = r_2(\mu, \nu) + r_1(\mu, \nu)^2, \\
&m_3(\mu) = r_3(\mu, \nu) + 2 r_2(\mu, \nu)r_1(\mu, \nu) + \frac{1}{2}r_2(\mu, \nu)r_1(\nu) + r_1(\mu, \nu)^3, \\
&m_4(\mu) = r_4(\mu, \nu) + 2r_3(\mu, \nu) r_1(\mu, \nu) + r_3(\mu, \nu) r_1(\nu) + r_2 (\mu, \nu)^2 + \frac{1}{2} r_2(\mu, \nu) r_2(\nu) \\ &~~~~~~~~~~~+ 3 r_2(\mu, \nu) r_1(\mu, \nu)^2  + r_2(\mu, \nu) r_1(\mu, \nu) r_1(\nu) +  \frac{1}{3} r_2(\mu, \nu) r_1(\nu) ^2 + r_1(\mu, \nu) ^4. 
\end{split}
\end{equation*} 
\end{example}

\section{Limit theorems}\label{Lim}
We show the central limit theorem and Poisson's law of small numbers 
for c-monotone independence. The $t$-transformation $\mathcal{U}_t$ \cite{BW1}, defined by 
\[
H_{\mathcal{U}_t(\mu)}(z) = (1-t)z +t H_\mu(z), 
\] 
appears in the limit distributions. 
\begin{theorem}
Let $(\mathcal{A}, \varphi, \psi)$ be a $C^\ast$-algebraic probability space with two states. \\
(1) Let $\{X_i \}_{i = 1} ^{\infty}$ be identically distributed (with respect to each state), c-monotone independent self-adjoint random variables in $\mathcal{A}$. If $\varphi(X_i) = \psi(X_i)  = 0$, $\varphi(X_i) = \alpha^2$ and $\psi(X_i^2) = \beta^2$, then 
the distribution of $\frac{X_1 + \cdots + X_n}{\sqrt{n}}$ with respect to $(\varphi, \psi)$ converges to $(\mathcal{U}_{\alpha^2 / \beta ^2}(\nu_{\beta^2}), \nu_{\beta ^2})$ weakly, where $\nu_{\beta^2}$ is the arcsine law with variance $\beta^2$. $\mathcal{U}_{\alpha^2 / \beta ^2}(\nu_{\beta ^2})$ is a Kesten distribution. \\
(2)  Let $X_N ^{(n)}$ $(1 \leq n \leq N,~1 \leq N < \infty)$ be self-adjoint random variables such that 
\begin{itemize} 
\item[(a)] for each $N$, $X_N ^{(n)} (1 \leq n \leq N)$ are identically distributed with respect to each state, c-monotone independent self-adjoint random variables;  
\item[(b)] $N \varphi((X_N^{(1)})^k) \to \lambda > 0$ and $N \psi((X_N^{(1)})^k) \to \rho > 0$ as $N \to \infty$ for all $k \geq 1$.
\end{itemize} 
Then the distribution of $X_N := X_N^{(1)} + \cdots + X_N^{(N)}$ with respect to $(\varphi, \psi)$  converges weakly to $(\mathcal{U}_{\lambda / \rho}(p_{\rho}), p_{\rho})$, where $p_\rho$ is the monotone Poisson distribution with parameter $\rho$. 
\end{theorem}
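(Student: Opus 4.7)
The strategy is to compute the limits of the c-monotone cumulants $r_n(\cdot,\cdot)$ and of the monotone cumulants $r_n(\cdot)$ of the normalized sums, and then to identify the limiting distributions through a cumulant characterization of the $t$-transform $\mathcal U_t$.

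For (1), write $(\mu_n,\nu_n)$ for the joint distribution of $S_n=(X_1+\cdots+X_n)/\sqrt n$ under $(\varphi,\psi)$. Since $\{X_i\}$ are c-monotone independent and identically distributed, $(\mu_n,\nu_n)=D_{1/\sqrt n}(\mu,\nu)^{\rhd n}$, so by quasi-additivity (K1$'$) and homogeneity (K2),
\begin{equation*}
r_k(\mu_n,\nu_n)=n^{1-k/2}r_k(\mu,\nu),\qquad r_k(\nu_n)=n^{1-k/2}r_k(\nu).
\end{equation*}
The hypotheses give $r_1(\mu,\nu)=m_1(\mu)=0$, $r_1(\nu)=0$, $r_2(\mu,\nu)=\alpha^2$ and $r_2(\nu)=\beta^2$, so only the second cumulants survive: in the limit, the $\nu$-component has all monotone cumulants zero except $r_2=\beta^2$, which is the arcsine law $\nu_{\beta^2}$ by Muraki's monotone CLT, while the $\mu$-component has all c-monotone cumulants zero except $r_2(\cdot,\nu_{\beta^2})=\alpha^2$. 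The combinatorial moment--cumulant formula (\ref{m-c11}) expresses every $m_k(\mu_n)$ as a polynomial in the cumulants, so all moments converge to those of the candidate limit; since the limits have compact support, convergence of moments upgrades to weak convergence.

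For (2), denote by $(\mu_N,\nu_N)$ the joint distribution of $X_N^{(1)}$ and apply (K1$'$) to get $r_k(\mu_N^{\rhd N},\nu_N^{\rhd N})=Nr_k(\mu_N,\nu_N)$ and $r_k(\nu_N^{\rhd N})=Nr_k(\nu_N)$. By property (K3), $r_k(\mu_N,\nu_N)=m_k(\mu_N)+Q_k(\text{lower moments of }\mu_N,\nu_N)$ with $Q_k$ containing no constant or linear term. Every moment of $\mu_N$ and of $\nu_N$ is of order $O(1/N)$ by assumption (b), so each monomial in $Q_k$ carries at least two such factors and $NQ_k=O(1/N)\to 0$. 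Hence $r_k(\mu_N^{\rhd N},\nu_N^{\rhd N})\to\lambda$ and $r_k(\nu_N^{\rhd N})\to\rho$ for all $k\ge 1$. The $\nu$-part then converges weakly to the monotone Poisson $p_\rho$ (all monotone cumulants equal to $\rho$), while the $\mu$-part converges to the unique compactly supported distribution whose c-monotone cumulants against $p_\rho$ are all equal to $\lambda$, again by (\ref{m-c11}) together with standard moment-uniqueness arguments.

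The main obstacle is the final identification of the $\mu$-limits with $\mathcal U_{\alpha^2/\beta^2}(\nu_{\beta^2})$ and $\mathcal U_{\lambda/\rho}(p_\rho)$. The natural route is a cumulant characterization: one argues that for every $\nu$ and every $t>0$, the measure $\mathcal U_t(\nu)$ of Bo\.{z}ejko--Wysoczański satisfies $r_k(\mathcal U_t(\nu),\nu)=t\,r_k(\nu)$ for all $k\ge 1$. Once this is established, the limits computed above have exactly the c-monotone cumulants of $\mathcal U_t(\nu_{\beta^2})$ with $t=\alpha^2/\beta^2$, respectively of $\mathcal U_t(p_\rho)$ with $t=\lambda/\rho$, and uniqueness of the distribution with prescribed c-monotone cumulants against a fixed $\nu$ finishes the proof. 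The verification of the cumulant characterization itself is carried out at the level of generating functions: combining Proposition \ref{asso111} with the monotone flow $\frac{d}{dt}H_\nu(t,z)=A_\nu(H_\nu(t,z))$ forces $A_{(\mathcal U_t(\nu),\nu)}=tA_\nu$, which in turn recovers the explicit reciprocal Cauchy transform of $\mathcal U_t(\nu)$ as a Kesten-type (respectively deformed monotone Poisson) distribution. This last link between the cumulant scaling and the geometric $t$-transformation is where the technical content of the argument concentrates; the rest is bookkeeping on cumulants.
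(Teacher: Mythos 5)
Your proposal is correct, and its first half --- computing $r_k(\mu_n,\nu_n)=n^{1-k/2}r_k(\mu,\nu)$ and $r_k(\nu_n)=n^{1-k/2}r_k(\nu)$ from quasi-additivity and homogeneity, observing that only the second (resp.\ first) cumulants survive, passing to moments via the moment--cumulant formula (\ref{m-c11}), and upgrading to weak convergence by compact support --- is exactly what the paper does. Where you diverge is the identification of the limit of the first component. The paper writes down the limiting flow equations
\begin{equation*}
\frac{d}{dt}H_{(\nu_{\alpha^2,\beta^2},\nu_{\beta^2})}(t,z)=-\frac{\alpha^2}{H_{\nu_{\beta^2}}(t,z)},\qquad
\frac{d}{dt}H_{(p_{\lambda,\rho},p_\rho)}(t,z)=\frac{\lambda H_{p_\rho}(t,z)}{1-H_{p_\rho}(t,z)},
\end{equation*}
integrates them explicitly (obtaining $H=(1-\tfrac{\alpha^2}{\beta^2})z+\tfrac{\alpha^2}{\beta^2}\sqrt{z^2-2\beta^2}$ and the Cauchy transform (\ref{Kesten1}) of the Kesten law), and only then observes that the answer is $\mathcal{U}_{\alpha^2/\beta^2}(\nu_{\beta^2})$, resp.\ $\mathcal{U}_{\lambda/\rho}(p_\rho)$. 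You instead match cumulants directly against the identity $r_k(\mathcal{U}_t(\nu),\nu)=t\,r_k(\nu)$, i.e.\ $r_k(\nu^{\uplus t},\nu)=t\,r_k(\nu)$, which the paper proves separately (as a proposition in the section on constructions of c-monotone convolution semigroups) by the same observation $A_{(\nu^{\uplus t},\nu)}=tA_\nu$ that you sketch. The two routes rest on the same flow equations, so neither is more general; yours is slightly more economical in that it never needs the explicit Kesten density or the closed form of $H_{p_\rho}$, while the paper's explicit integration has the advantage of exhibiting the limit laws concretely and of making the compact support of the limit (needed for the weak-convergence upgrade) immediate rather than inferred from the $\mathcal{U}_t$ representation. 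One small point to keep in view: the limiting cumulants you compute are limits of $r_k(\mu_n,\nu_n)$ with a \emph{moving} second argument, so the identification of them as $r_k(\mu_\infty,\nu_{\beta^2})$ (resp.\ $r_k(\mu_\infty,p_\rho)$) should be justified by the polynomial dependence of cumulants on moments in axiom (K3); this is routine but worth stating.
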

\begin{proof}
We discuss the problems in terms of probability measures. 
Let $\mu, \nu \in \pc$ such that $m_1(\mu) = m_1(\nu) = 0$, $m_2(\mu) = \alpha ^2$ and $m_2(\nu) = \beta^2$ with 
$\alpha^2, \beta^2 > 0$. We define $(\mu_N, \nu_N):= (D_{\frac{1}{\sqrt{N}}}\mu, D_{\frac{1}{\sqrt{N}}}\nu)^{\rhd N}$. The convolution for the second component is the usual monotone convolution, so that the limit distribution $\nu_{\beta^2}$ exists and it is the centered arcsine law with variance $\beta^2$. A simple calculation shows that $r_1(\mu_N, \nu_N) = 0,~ r_2(\mu_N, \nu_N) = \alpha^2$ and $r_n(\mu_N, \nu_N) \to 0$ as $N \to \infty$. Therefore, there exists a pair of limit distributions $(\nu_{\alpha^2, \beta^2}, \nu_{\beta^2})$ at least in the sense of moments. The limit distributions are characterized by 
\begin{gather}
\frac{d}{dt}H_{(\nu_{\alpha^2, \beta^2},\nu_{\beta^2})}(t, z) = -\frac{\alpha^2}{H_{\nu_{\beta^2}}(t, z)},~ H_{(\nu_{\alpha^2, \beta^2},\nu_{\beta^2})}(1, z) = H_{\nu_{\alpha^2, \beta^2}}(z), \\ \frac{d}{dt}H_{\nu_{\beta^2}}(t, z) = -\frac{\beta^2}{H_{\nu_{\beta^2}}(t, z)},~ H_{\nu_{\beta^2}}(1 ,z) = H_{\nu_{\beta^2}}(z). \label{limit111}
\end{gather}
We obtain $H_{\nu_{\beta^2}}(t, z) = \sqrt{z^2 - 2\beta^2 t}$ from (\ref{limit111}). Setting $t = 1$, we obtain the limit measure $\nu_{\beta^2}(dx) = \frac{1}{\pi \sqrt{2\beta^2 - x^2}}dx$, $x \in (-\sqrt{2}\beta, \sqrt{2}\beta)$. 
For the first component, we can easily prove that $H_{(\nu_{\alpha^2, \beta^2}, \nu_{\beta^2})}(1, z) = (1 -\frac{\alpha^2}{\beta^2})z + \frac{\alpha^2}{\beta^2}\sqrt{z^2 - 2\beta^2}$, and hence 
\begin{equation}\label{Kesten1}
G_{\nu_{\alpha^2, \beta^2}}(z)= \frac{\alpha^2\sqrt{z^2 - 2\beta^2} + (\alpha^2 - \beta^2)z}{(2 \alpha^2 - \beta^2)z^2 - 2\alpha^4}.  
\end{equation}
This is the Stieltjes transform of a Kesten distribution. 
In terms of the $t$-transformation $\mathcal{U}_t$, the limit distribution can be written as $\mathcal{U}_{\alpha^2 / \beta^2}(\nu_{\beta^2})$. 
It is compactly supported, so that the convergence is in the sense of weak convergence (see Theorem 4.5.5 of \cite{KLC}). 
We note that $\nu_{\beta^2, \beta^2} = \nu_{\beta^2}$. 

Next we consider Poisson's law of small numbers.  
Let $\mu^{(N)}, \nu^{(N)} \in \pc$ such that $Nm_n(\mu^{(N)}) \to \lambda > 0$ and 
$Nm_n(\nu^{(N)}) \to \rho > 0$ as $N \to \infty$ for any $n \geq 1$. We define $(\mu_N, \nu_N):= (\mu^{(N)}, \nu^{(N)})^{\rhd N}$. 
It is not difficult to prove that $r_n(\mu_N, \nu_N) \to \lambda$ as $N \to \infty$ for any $n \geq 1$. 
It is known that $\nu_N$ converges weakly to the monotone Poisson distribution $p_\rho$ with parameter $\rho$, which is characterized by the differential equation 
\begin{equation}\label{diff11}
\frac{d}{dt}H_{p_\rho}(t, z) = \frac{\rho H_{p_\rho}(t, z)}{1-H_{p_\rho}(t, z)},~  H_{p_\rho}(1, z) = H_{p_\rho}(z), 
\end{equation} 
where $H_{p_\rho}(1, z) = H_{p_\rho}(z)$. 
We let $p_{\lambda, \rho}$ denote the limit distribution $\lim_{N \to \infty} \mu_{N}$ in the sense of moments. 
Then $p_{\lambda, \rho}$ is characterized by the differential equation 
\begin{equation}\label{diff12}
\frac{d}{dt}H_{(p_{\lambda, \rho}, p_\rho)}(t, z) = \frac{\lambda H_{p_\rho}(t, z)}{1-H_{p_\rho}(t, z)},~  H_{(p_{\lambda, \rho}, p_\rho)}(1, z)=H_{p_{\lambda, \rho}}(z). 
\end{equation} 
From  (\ref{diff11}) and (\ref{diff12}) we have $\frac{d}{dt}H_{(p_{\lambda, \rho}, p_\rho)}(t, z) = \frac{\lambda}{\rho}\frac{d}{dt}H_{p_\rho}(t, z)$, which implies 
\begin{equation}
H_{p_{\lambda, \rho}}(z) = \Big{(}1 - \frac{\lambda}{\rho} \Big{)}z + \frac{\lambda}{\rho}H_{p_\rho}(z). 
\end{equation}
It is not difficult to see that $p_{\lambda, \rho}$ has a compact support. Therefore, $\mu_N$ converges weakly to $p_{\lambda, \rho}$. Also in this case, $p_{\lambda, \rho}$ can be written as $p_{\lambda, \rho} = \mathcal{U}_{\lambda / \rho}(p_{\rho})$. 
\end{proof}

\section{Convolution semigroups} \label{Inf1}
We investigate convolution semigroups and infinite divisibility from this section. First we establish the equivalence between a pair of vector fields and a weakly continuous c-monotone convolution semigroup. We follow the method used by Muraki. We use the notation $H_t(z)$ and $F_t(z)$ for the reciprocal Cauchy transforms of the left component of a semigroup and of the right one, respectively. 
\begin{theorem}\label{thm1}
Let $\{F_t(z) \}_{t \geq 0}$ be a composition semigroup of reciprocal Cauchy transforms with $F_0(z) = z$. 
We assume that $F_t(z)$ is continuous with respect to $t \in [0, \infty)$ for each fixed $z \in \comp \backslash \real$. 
Let $\{H_t(z) \}_{t \geq 0}$ be a family of reciprocal Cauchy transforms with $H_0(z) = z$, satisfying 
the same continuity condition as $F_t(z)$ and satisfying the equality $H_{t+s}(z) = H_t(F_s(z)) - F_s(z) + H_s(z)$. Then 
there exist analytic vector fields $A_1$ and $A_2$ such that 
$H_t$ and $F_t$ satisfy the differential equations 
\begin{align}
\frac{d}{dt} H_t(z) = A_1(F_t(z)), \label{diff1} \\
\frac{d}{dt} F_t(z) = A_2(F_t(z)).  \label{diff2}
\end{align}
Moreover, the vector fields $A_1(z) = \frac{d}{dt}|_0 H_t(z)$ and $A_2(z) = \frac{d}{dt}|_0 F_t(z)$ have the representations 
\begin{align}\label{eq31}
& A_j(z) = -\gamma_j + \int_{\real} \frac{1 + xz}{x - z} d\tau_j(x) 
\end{align}
for $j = 1, 2$, where $\tau_j$ is a positive finite measure and $\gamma_j \in \real$ 
(This is the L\'{e}vy-Khintchine formula for the c-monotone convolution). 

Conversely, given analytic vector fields $A_1$ and $A_2$ on the upper halfplane of the forms (\ref{eq31}), by solving the equations (\ref{diff1}) and (\ref{diff2}) 
we obtain  $\{F_t(z) \}_{t \geq 0}$ and $\{H_t(z) \}_{t \geq 0}$ which are $C^\omega$ functions with respect to $(t, z)$ and satisfy $F_0(z) = z$,  $H_0(z) = z$, $F_{t + s} = F_t \circ F_s$ and $H_{t+s}(z) = H_t(F_s(z)) - F_s(z) + H_s(z)$.  
\end{theorem}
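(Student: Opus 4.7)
The proof splits into a forward direction (semigroup $\Rightarrow$ vector fields with L\'evy-Khintchine form) and a converse. The forward part for $F_t$ alone is the classical monotone case: the pointwise continuous composition semigroup $\{F_t\}$ of self-maps of $\com+$ with $F_t(iy)/iy\to 1$ is automatically $C^1$ in $t$ locally uniformly in $z$ (Loewner-type regularity, as used by Muraki), and differentiating $F_{t+s}=F_t\circ F_s$ in $s$ at $s=0$ gives $\frac{d}{dt}F_t(z)=A_2(F_t(z))$ with $A_2(z):=\frac{d}{dt}|_0 F_t(z)$. Since each $F_t$ admits the Nevanlinna representation (\ref{nev}), the difference quotient $(F_t(z)-z)/t$ is a Herglotz function on $\com+$, so the limit $A_2$ satisfies $\im A_2\geq 0$; the normalization $F_t(iy)/iy\to 1$ forbids a linear $z$-term in $A_2$, which gives precisely the form (\ref{eq31}).

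For $H_t$ the decisive step is to differentiate the cocycle
\begin{equation}\label{cocplan}
H_{t+s}(z) = H_t(F_s(z)) - F_s(z) + H_s(z)
\end{equation}
in the variable $t$ at $t=0$: only the first summand on the right depends on $t$, so one immediately obtains
\[
\frac{d}{ds}H_s(z) = A_1(F_s(z)), \qquad A_1(z) := \frac{d}{dt}\Big|_0 H_t(z).
\]
The $t$-differentiability of $H_t$ itself is extracted from (\ref{cocplan}) combined with the regularity already established for $F_s$. The Herglotz form of $A_1$ is then inherited by the same reasoning used for $A_2$, applied to the representation (\ref{nev}) of each $H_t$.

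For the converse, given $A_1, A_2$ of the form (\ref{eq31}), I would first solve the Loewner equation $\frac{d}{dt}F_t(z)=A_2(F_t(z))$ with $F_0(z)=z$. Because $\im A_2 \geq 0$, the orbits of this analytic ODE stay in $\com+$, and dominated convergence against the integral representation of $A_2$ yields $F_t(iy)/iy\to 1$, so $F_t$ is a reciprocal Cauchy transform; by ODE uniqueness $F_{t+s}=F_t\circ F_s$ is automatic. Then define $H_t(z):=z+\int_0^t A_1(F_s(z))\,ds$; closure of the Herglotz class under integration together with dominated convergence for the $iy$-asymptotic shows $H_t$ is a reciprocal Cauchy transform. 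To verify (\ref{cocplan}), differentiate both sides in $t$ at fixed $s$: the left gives $A_1(F_{t+s}(z))=A_1(F_t(F_s(z)))$ via $F_{t+s}=F_t\circ F_s$, and the right gives $\frac{d}{dt}H_t(F_s(z)) = A_1(F_t(F_s(z)))$; both sides agree at $t=0$ (namely $H_s(z)$), so uniqueness of solutions in $t$ closes the argument, while real-analyticity in $(t,z)$ follows from the analytic dependence of ODE solutions on parameters.

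The main technical obstacle is the bare existence of the derivatives $A_1, A_2$ from pointwise continuity in $t$ alone: for the composition semigroup $\{F_t\}$ this is the classical Loewner regularity, and the only new point here is to transfer it to $\{H_t\}$ via (\ref{cocplan}). A second subtle point appears in the converse, where one must verify the precise asymptotic $H_t(iy)/iy\to 1$ to ensure $H_t$ is a reciprocal Cauchy transform rather than a general self-map of $\com+$ with a residual linear part; this is a dominated convergence argument resting on the finiteness of $\tau_1$ and on $A_1(iy)/iy\to 0$.
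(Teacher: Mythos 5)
Your route is essentially the paper's: the claims for $F_t$ are delegated to Muraki's monotone result, the equation $\frac{d}{dt}H_t(z)=A_1(F_t(z))$ is read off from the cocycle $H_{t+s}(z)=H_t(F_s(z))-F_s(z)+H_s(z)$, and the converse is handled by solving the Loewner equation for $F_t$, setting $H_t(z)=z+\int_0^t A_1(F_s(z))\,ds$, and appealing to uniqueness of the ODE in $t$. The converse half of your argument is essentially complete and matches the paper.

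The gap is in the forward direction, at precisely the point you name as the main technical obstacle and then do not resolve: the \emph{existence} of $A_1(z):=\lim_{\delta\searrow 0}(H_\delta(z)-z)/\delta$. Differentiating the cocycle in $t$ at $t=0$ presupposes this limit (the cocycle at $t=0$ reduces to the tautology $H_s=H_s$), and Loewner regularity applies to the composition semigroup $\{F_t\}$ only; it does not transfer to $\{H_t\}$ by any formal manipulation of the cocycle, so "extracted from the cocycle combined with the regularity of $F_s$" is an assertion, not an argument. The paper closes this with a quantitative telescoping argument: writing $H_t(z)=a_t+z+\int_\real\frac{1+xz}{x-z}\,d\eta_t(x)$ as in (\ref{nev}), the cocycle gives
\[
H_{t+s}(z)-H_t(z)=(H_s(z)-z)+(F_s(z)-z)\int_{\real}\frac{1+x^2}{(x-F_s(z))(x-z)}\,\eta_t(dx),
\]
and summing over $t=k\delta/n$, $s=\delta/n$, $0\le k\le n-1$ yields
\[
\frac{H_{\delta}(z)-z}{\delta}=\frac{H_{\delta/n}(z)-z}{\delta/n}+\frac{1}{\delta}\cdot\frac{F_{\delta/n}(z)-z}{\delta/n}\cdot\frac{\delta}{n}\sum_{k=0}^{n-1}\int_{\real}\frac{1+x^2}{(x-F_{\delta/n}(z))(x-z)}\,\eta_{\frac{k}{n}\delta}(dx).
\]
As $n\to\infty$ the left side is fixed, the Riemann sum converges by continuity of $t\mapsto\eta_t$ (extracted from the cocycle itself), and $(F_{\delta/n}(z)-z)/(\delta/n)\to A_2(z)$ by the already-settled $F$-case; hence $(H_{\delta/n}(z)-z)/(\delta/n)$ is \emph{forced} to converge, after which one checks the limit is independent of $\delta$ and upgrades the sequential limit to $\lim_{\delta\searrow 0}$. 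Without this (or an equivalent) step the forward direction is unproved. A secondary gloss: to obtain (\ref{eq31}) you must verify $\lim_{y\to\infty}\im A_1(iy)/y=0$, and this does not follow merely from $A_1$ being a pointwise limit of Pick functions with vanishing linear coefficient (the linear coefficient can jump up in such limits, e.g.\ $\frac{1+nz}{n-z}\to z$); the paper instead reads it off the exact identity $A_1(z)=\frac{H_\delta(z)-z}{\delta}-A_2(z)\,\delta^{-1}\int_0^{\delta}dt\int_{\real}\frac{1+x^2}{(x-z)^2}\,\eta_t(dx)$ for a fixed $\delta$.
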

\begin{proof}
We use the representations  
\begin{align}\label{eq1}
&H_t (z) = a_t + z + \int_{\real} \frac{1 + xz}{x-z} d\eta_t(x), \\
&F_t (z) = b_t + z + \int_{\real} \frac{1 + xz}{x-z} d\xi_t(x). 
\end{align}
We follow the method of \cite{Mur3}. 
The claims for $F_t$ were proved in \cite{Mur3}; we only have to prove the claims for $H_t$. 
However, the proof below actually includes that for $F_t$ when we put $H_t = F_t$. 
A direct calculation leads to the following equality: 
\begin{equation}\label{eq1.1}
\begin{split}
H_{t + s}(z) - H_t(z) &= a_s +  \int_{\real}\frac{1 + xz}{x-z}\eta_s(dx) + (F_s(z) - z)\int_{\real} \frac{1 + x^2}{(x - F_s(z))(x - z)} \eta_t(dx) \\
                      &= (H_s (z) - z) +  (F_s(z) - z)\int_{\real} \frac{1 + x^2}{(x - F_s(z))(x - z)} \eta_t(dx). 
\end{split}
\end{equation}

\textit{Step (1): the right differentiability of $H_t$ and $F_t$ at $0$.}
Take $\delta > 0$, $n \in \nat$ and $k \in \nat$ ($0 \leq k \leq n$). 
We set $s = \frac{\delta}{n}$ and $t = \frac{k}{n}\delta$. Summing the equality (\ref{eq1.1}) over $k$, we obtain 
\begin{equation}\label{eq3}
H_{\delta}(z) - z = n(H_{\frac{\delta}{n}}(z) - z) + (F_{\frac{\delta}{n}}(z) - z)\sum_{k = 0} ^{n-1}\int_{\real} \frac{1 + x^2}{(x - F_{\frac{\delta}{n}}(z))(x - z)} \eta_{\frac{k}{n}\delta}(dx). \\ 
\end{equation}
Then 
 \begin{equation}\label{eq4}
\begin{split}
\frac{H_{\delta}(z) - z}{\delta} = \frac{H_{\frac{\delta}{n}}(z) - z}{\delta / n} + \delta^{-1}\cdot \frac{F_{\frac{\delta}{n}}(z) - z}{\delta / n} \cdot \frac{\delta}{n}\sum_{k = 0} ^{n-1}\int_{\real} \frac{1 + x^2}{(x - F_{\frac{\delta}{n}}(z))(x - z)} \eta_{\frac{k}{n}\delta}(dx). \\ 
\end{split}
\end{equation}
Since $H_t(i) = a_t + (1 + \eta_t(\real))i$, $a_t$ is a continuous function. If $F_s(z) = z$ for some $s > 0$, then $F_s(z) = z$ for all $s > 0$, so that the parameter $t$ of $H_t$ expresses the Boolean time evolution. In this case, the claims are trivial. Therefore, we assume that $F_s(z) \neq z$ for all $s > 0$. 
Then $\int_{\real} \frac{1 + x^2}{(x - F_s(z))(x - z)} \eta_t(dx)$ is a continuous function of $t$ since it can be expressed as  
$(H_t(F_s(z)) - H_t(z) - F_s(z)+z)/(F_s(z)-z)$. Therefore, 
 \begin{equation}\label{eq5}
\lim_{n \to \infty}\frac{\delta}{n}\sum_{k = 0} ^{n-1}\int_{\real} \frac{1 + x^2}{(x - F_{\frac{\delta}{n}}(z))(x - z)} \eta_{\frac{k}{n}\delta}(dx) =  \int_{0}^{\delta} dt \int_{\real} \frac{1 + x^2}{(x - z)^2} \eta_{t}(dx). 
\end{equation}
We note that $\lim_{t \searrow 0} \eta_t(\real) = 0$, and hence, for small enough $\delta > 0$ the integral 
 $\frac{1}{\delta}\int_{0}^{\delta} dt \int_{\real} \frac{1 + x^2}{(x - z)^2} \eta_{t}(dx)$ is small. 
If we put $H_t$ equal to $F_t$, (\ref{eq5}) implies the existence of 
\begin{equation}\label{eq5.1}
A_{\delta, 2}(z) := \lim_{n \to \infty} \frac{F_{\frac{\delta}{n}}(z) - z}{\delta / n} = \frac{(H_{\delta}(z) - z)/ \delta}{1 +  \frac{1}{\delta}\int_{0}^{\delta} dt \int_{\real} \frac{1 + x^2}{(x - z)^2} \eta_{t}(dx)}
\end{equation}
for small $\delta$, which has been obtained in \cite{Mur3}. 
The limit does not depend on $\delta > 0$ as shown in \cite{Mur3}; we summarize the proof here. We can prove easily that $A_{r\delta, 2} = A_{\delta, 2}$ for any positive rational number $r$. The equality (\ref{eq5.1}) implies that $A_{\delta, 2}$ depends on $\delta$ continuously. Therefore, $A_{r\delta,2} = A_{\delta, 2}$ for every positive real number $r > 0$. Then it may be simply denoted by $A_2(z)$. 

We note again that the equality 
\begin{equation}\label{eq6.0}
A_2(z) = \frac{(F_{\delta}(z) - z)/ \delta}{1 +  \frac{1}{\delta}\int_{0}^{\delta} dt \int_{\real} \frac{1 + x^2}{(x - z)^2} \eta_{t}(dx)}
\end{equation}
holds.  
Taking $\delta \to 0$ in the above equality we obtain $A_2(z) = \lim_{\delta \searrow 0} \frac{F_{\delta}(z) - z}{\delta}$.  
Again for general $(H_t, F_t)$, by taking the limit $n \to \infty$ in (\ref{eq4}) we have the existence of the limit 
$A_{1, \delta}(z) = \lim_{n \to \infty} \frac{H_{\frac{\delta}{n}}(z) - z}{\delta / n}$ which satisfies 
\begin{equation}\label{eq6}
\frac{H_{\delta}(z) - z}{\delta} = A_{1, \delta}(z) + A_2(z) \delta^{-1}\int_{0}^{\delta} dt \int_{\real} \frac{1 + x^2}{(x - z)^2} \eta_{t}(dx). \\ 
\end{equation} 
With the same argument as for $A_{2, \delta}$, $A_{1, \delta}$ does not depend on $\delta$, so that it may be denoted by $A_1$. Taking the limit $\delta \searrow 0$ in (\ref{eq6}), we have the right differentiability $\lim_{\delta \searrow 0} \frac{H_{\delta}(z) - z}{\delta} = A_1(z)$. 

\textit{Step (2): the differentiability of $H_t$ and $F_t$.} We do not refer to the claims for $F_t$, since it is known in \cite{Mur3}, or since the proof below is true for $F_t$ if we put $H_t = F_t$. 

We define the right and left derivatives by setting $D_t ^+ H_t(z) = \lim_{\delta \searrow 0}\frac{H_{t + \delta} (z) - H_t(z)}{\delta}$ 
and $D_t ^- H_t(z) = \lim_{\delta \searrow 0}\frac{H_{t - \delta} (z) - H_t(z)}{\delta}$ respectively. 
Then  
\begin{equation}\label{eq7}
\begin{split}
D_t ^+ H_t(z) &= \lim_{\delta \searrow 0}\frac{H_{\delta} (F_t(z)) - F_t(z)}{\delta} \\
              &=  A_1(F_t(z)). 
\end{split}
\end{equation}
Take $T > 0$, $\delta > 0$, $n \in \nat$ and $k \in \nat$ ($0 \leq k \leq n$). 
From an argument similar to the derivation of (\ref{eq4}), we have 
 \begin{equation}\label{eq8}
\begin{split}
\frac{H_{T}(z) - H_{T- \delta}(z)}{\delta} = \frac{H_{\frac{\delta}{n}}(z) - z}{\delta / n} + \delta^{-1}\cdot \frac{F_{\frac{\delta}{n}}(z) - z}{\delta / n} \cdot \frac{\delta}{n}\sum_{k = 1} ^{n}\int_{\real} \frac{1 + x^2}{(x - F_{\frac{\delta}{n}}(z))(x - z)} \eta_{T - \frac{k}{n}\delta}(dx). \\ 
\end{split}
\end{equation}
Taking the limit $n \to \infty$, we obtain 
\[
\frac{H_{T}(z) - H_{T- \delta}(z)}{\delta} = A_1(z) + \delta^{-1}A_2(z) \int_{0}^{\delta} dt \int_{\real} \frac{1 + x^2}{(x - z)^2} \eta_{T-t}(dx).
\]
Moreover, let $\delta \searrow 0$, to know that the limit 
\[
D_t ^- H_t(z) = A_1(z) + A_2(z)\Big{(}\frac{\partial H_t}{\partial z}(z) -1 \Big{)} ~~(t > 0) 
\]
exists. 
Finally, the differentiability of $H_t$ at $t > 0$ follows from the calculations 
 \begin{equation}\label{eq8.1}
\begin{split}
D_t ^+ H_t(z) &= \lim_{\delta \searrow 0}\frac{H_{t} (F_{\delta}(z)) - H_t(z) - F_\delta (z) + H_\delta (z)}{\delta} \\
              &= \lim_{\delta \searrow 0}\frac{H_{t} (F_{\delta}(z)) - H_t(z)}{F_\delta(z) - z} \cdot \frac{F_\delta(z) - z}{\delta} + \lim_{\delta \searrow 0}\frac{H_\delta(z) - z}{\delta} - \lim_{\delta \searrow 0}\frac{F_\delta(z) - z}{\delta} \\
              &= A_2(z)\frac{\partial H_t}{\partial z}(z) + A_1(z) - A_2(z) \\
              &= D_t ^- H_t(z).  
\end{split}
\end{equation}
(\ref{diff1}) follows from (\ref{eq7}). Then $H_t(z) = z + \int_0 ^t A_1(F_s(z))ds$, which implies 
that $H_t(z)$ is in $C^\omega ([0, \infty))$. 

\textit{Step (3): the representation of $A_1(z)$.} It is sufficient to prove that 
\begin{itemize}
\item[(i)] $A_1(z)$ is a Pick function, i.e., 
$A_1(z)$ is analytic in $\com+$ and maps $\com+$ into $\com+ \cup \real$; 
\item[(ii)] $\lim_{y \to \infty} \frac{\im A_1(iy)}{y} = 0$.  
\end{itemize}
Now we know the relation  
\begin{equation}\label{eq9}
A_{1}(z) = \frac{H_{\delta}(z) - z}{\delta} - A_2(z) \delta^{-1}\int_{0}^{\delta} dt \int_{\real} \frac{1 + x^2}{(x - z)^2} \eta_{t}(dx),  
\end{equation}  
from which $A_1$ is analytic. Since $\frac{H_\delta (z) - z}{\delta}$ is a Pick function, its limit $A_1$ is also. 
It is easy to prove the property (ii) by using (\ref{eq9}). 

\textit{Step (4): the converse statement.} We note that the solution of the differential equation 
\begin{equation}
\begin{split}
&\frac{d}{dt} F_t(z) = A_2(F_t(z)), \\
&F_0(z) = z,  
\end{split}
\end{equation}
does not explode in a finite time \cite{Be-Po}. Then the equation defines a flow of reciprocal Cauchy transforms indexed by the non-negative real numbers. We can define $H_t$ by setting
\begin{align}
H_t(z) = z + \int_0 ^t A_1(F_s(z))ds.   
\end{align}
We need to prove the equality $H_{t+s}(z) = H_t(F_s(z)) - F_s(z) + H_s(z)$. 
We fix $s \geq 0, z \in \com+$ and put $J_t(z):= H_{t+s}(z)$, $K_t(z):= H_t(F_s(z)) - F_s(z) + H_s(z)$. 
Then $\frac{d}{dt} J_t(z) = A_1(F_{t+s}(z))$ and $\frac{d}{dt}K_t(z) = A_1(F_t \circ F_s(z))$ with $J_0 = K_0$ by definition, so that  $\frac{d}{dt} J_t = \frac{d}{dt}K_t$. Therefore, $J_t = K_t$ for all $t \geq 0$. 
\end{proof}

\section{Constructions of convolution semigroups} 
Several examples of monotone convolution semigroups are known in \cite{Has3,Mur3}. In this section, we show several ways to construct c-monotone convolution semigroups from monotone and Boolean convolution semigroups. 

(1) Let $\{\mu_t \}_{ t \geq 0}$ be a weakly continuous Boolean convolution semigroup with $\mu_0 = \delta_0$. Then 
$\{(\mu_t, \delta_0) \}_{t \geq 0}$ is a c-monotone convolution semigroup. The vector fields $A_1$ and $A_2$ are given by 
\[
A_1(z) = -z + H_{\mu_1}(z), ~~A_2(z) = 0. 
\]

(2) Let $\{\mu_t \}_{ t \geq 0}$ be a weakly continuous monotone convolution semigroup with $\mu_0 = \delta_0$. 
Then $\{(\mu_t, \mu_t) \}_{t \geq 0}$ is a c-monotone convolution semigroup. The vector fields $A_1$ and $A_2$ coincide. 

(3) Let $\mathcal{U}_t$ be the $t$-transformation \cite{BW1}. We recall that $\mathcal{U}_t(\mu)$ is characterized by 
\[
H_{\mathcal{U}_t(\mu)}(z) = (1- t)z + tH_{\mu}(z). 
\]
Let $\{\mu_t \}_{ t \geq 0}$ be a weakly continuous monotone convolution semigroup with $\mu_0 = \delta_0$. 
Then $\{(\mathcal{U}_r(\mu_t), \mu_t) \}_{t \geq 0}$ is a c-monotone convolution semigroup for any $r \geq 0$ since, under the notation $H_t(z):= H_{\mathcal{U}_r (\mu_t)}$, we have
\begin{equation*}
\begin{split}
H_{t}(F_s(z)) - F_s(z) + H_s(z) &= (1 - r)F_s(z) + r F_t(F_s(z)) - F_s(z) + (1 - r)z + rF_s(z) \\
                                &= (1 - r)z + rF_{t + s}(z) \\ 
                                   &= H_{t+s}(z).     
\end{split}
\end{equation*}
In this case, $A_1(z) = \frac{d}{dt}|_{t = 0} H_t(z) = r A_2(z)$. We note that this construction includes the 
example (2) when $r = 1$. 
This property can be understood in terms of cumulants as follows. 
\begin{proposition} The relation
\begin{equation}
r_n(\mu^{\uplus t}, \mu) = t r_n  (\mu)  
\end{equation}
holds for all $\mu$ with the finite moments of all orders. 
\end{proposition}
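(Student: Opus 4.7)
First I would observe that $\mu^{\uplus t}$ coincides with the $t$-transform $\mathcal{U}_t(\mu)$ of example $(3)$: since the Boolean convolution linearizes in the variable $K_\mu(z) := z - H_\mu(z)$, one has $K_{\mu^{\uplus t}} = t K_\mu$, which rearranges to $H_{\mu^{\uplus t}}(z) = (1-t)z + t H_\mu(z)$. Thus the pair $(\mu^{\uplus t}, \mu)$ is of the form $(\mathcal{U}_t(\nu), \nu)$ with $\nu = \mu$, and example $(3)$ already suggests the c-monotone semigroup that should interpolate between $(\delta_0, \delta_0)$ and this pair at $s = 1$.

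The plan is to let $\{\nu_s\}_{s \geq 0}$ be the formal monotone convolution semigroup with $\nu_0 = \delta_0$ generated by the vector field $A_\mu$ — that is, the semigroup with moments $m_n(\nu_s) = m_n(\mu, s)$ in the notation of (\ref{cum22}), so that $\nu_1 = \mu$ — and to put $\mu_s := \mathcal{U}_t(\nu_s)$. Example $(3)$ then applies to $\{(\mu_s, \nu_s)\}_{s \geq 0}$ verbatim; the c-monotone semigroup identity of Theorem \ref{thm1},
\[
H_{\mu_{s_1+s_2}}(z) = H_{\mu_{s_1}}(H_{\nu_{s_2}}(z)) + H_{\mu_{s_2}}(z) - H_{\nu_{s_2}}(z),
\]
follows from a direct algebraic substitution using $H_{\mu_s} = (1-t)z + t H_{\nu_s}$ and $H_{\nu_{s_1+s_2}} = H_{\nu_{s_1}} \circ H_{\nu_{s_2}}$, all interpreted as formal Laurent series in $1/z$ with polynomial coefficients in $s$.

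The cumulants are then read off from the vector fields of this semigroup. By construction $A_2 = A_\mu$, and differentiating the identity $H_{\mu_s} = (1-t)z + t H_{\nu_s}$ at $s = 0$ gives $A_1 = t A_\mu$. Since $(\mu_1, \nu_1) = (\mu^{\uplus t}, \mu)$, Definition \ref{momentcumulant} identifies the coefficients of $A_1$ with the c-monotone cumulants of the pair, yielding
\[
-\sum_{n \geq 1} \frac{r_n(\mu^{\uplus t}, \mu)}{z^{n-1}} \;=\; A_1(z) \;=\; t A_\mu(z) \;=\; -t \sum_{n \geq 1} \frac{r_n(\mu)}{z^{n-1}},
\]
so that $r_n(\mu^{\uplus t}, \mu) = t r_n(\mu)$ for every $n \geq 1$.

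The only point requiring care, and arguably the main (minor) obstacle, is that for a general $\mu$ the formal semigroup $\{\nu_s\}$ need not consist of honest probability measures, so one cannot directly invoke example $(3)$ as a statement about weakly continuous convolution semigroups. I would handle this exactly as in Section \ref{Cumulants112}: work throughout at the level of the formal power series and polynomial moments introduced around (\ref{cum17})–(\ref{cum18}), where the semigroup identities are purely algebraic. Since Definition \ref{momentcumulant} makes $r_n(\mu^{\uplus t}, \mu)$ a polynomial function of the moments of the pair, independent of any particular extension to a semigroup, the formal computation above legitimately determines the cumulants.
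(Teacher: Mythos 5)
Your proof is correct and follows essentially the same route as the paper: the paper derives this proposition directly from its example $(3)$, where the computation showing that $\{(\mathcal{U}_r(\mu_s),\mu_s)\}_{s\ge 0}$ is a c-monotone convolution semigroup with $A_1 = rA_2$ is exactly your algebraic verification, and the cumulants are then read off from the generators just as you do. Your final paragraph, passing to formal power series so that the argument applies to arbitrary $\mu\in\pmo$ rather than only to measures embeddable in a weakly continuous semigroup, is a point the paper leaves implicit but which is consistent with how Section \ref{Cumulants112} defines the cumulants.
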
 
This means formally ``$r_n(\mu ^{\uplus t}, \mu) = r_n (\mu ^{\rhd t})$'', which connects the Boolean convolution with the monotone convolution. We will generalize this relation 
in Corollary \ref{rel1}.

(4) Let $\{\mu_t \}_{ t \geq 0}$ be a weakly continuous monotone convolution semigroup with $\mu_0 = \delta_0$ and $a \in \real$. 
Then $\{(\delta_{at}, \mu_t) \}_{t \geq 0}$ is a c-monotone convolution semigroup. The vector field $A_1$ is a constant $a$. 
  
(5) The Cauchy distribution $\lambda_{tb}(dx):= \frac{tb}{\pi(x^2 + (tb)^2)}dx$ is infinitely divisible in the tensor, free, Boolean and monotone cases and becomes a convolution semigroup with the time parameter $t \geq 0$. Moreover, $\lambda_{tb}$ is a strictly 1-stable distribution (see \cite{Be-Vo,Has3,Mur3,Sat,S-W}) for any one of the four kinds of convolutions. 
$\lambda_{tb}$ plays a special role also in the case of c-monotone independence; for instance, $\{(\lambda_{bt}, \mu_t) \}_{t \geq 0}$ is a c-monotone convolution semigroup for any monotone convolution semigroup $\{\mu_t \}_{ t \geq 0}$. 
 $A_1(z)$ is a constant $ib$. 

(6) We introduce an operation to construct a new c-monotone convolution semigroup for given c-monotone convolution semigroups. 
\begin{definition} A convolution $\kappa^{u, v}$: $\pa \times \pa \to \pa$ is defined by 
\begin{equation}
\kappa^{u, v} (\mu, \nu):= \mu^{\uplus u} \uplus \nu^{\uplus v} \text{~for~} u,v \geq 0.    
\end{equation}
In terms of reciprocal Cauchy transforms, we have
\begin{equation}
H_{\kappa^{u, v} (\mu, \nu)}(z) := u H_\mu (z) + v H_\nu (z) + (1 - u - v)z ~~ \text{for~~} u, v \geq 0.  
\end{equation}
\end{definition}
\begin{proposition} (1) Let $\{(\mu_t, \lambda_t) \}_{t \geq 0}$ and $\{(\nu_t, \lambda_t) \}_{t \geq 0}$ be c-monotone convolution semigroups with the same right component. We let $A_1 ^{\mu, \lambda}$ and $A_1 ^{\nu, \lambda}$ denote the vector fields for the left component of $\{(\mu_t, \nu_t) \}_{t \geq 0}$ and $\{(\nu_t, \lambda_t) \}_{t \geq 0}$, respectively, and also $A_2 ^\lambda$ denote the vector field for the common right component. 
Then $\{(\kappa ^{u, v}(\mu_t, \nu_t), \lambda_t) \}_{t \geq 0}$ is a c-monotone convolution semigroup.  
A pair of the vector fields is given by $(uA_1 ^{\mu, \lambda} + vA_1 ^{\nu, \lambda}, A_2 ^\lambda)$. \\ 
(2) In terms of cumulants, we can understand the convolution $\kappa^{u,v}$ in the form
\[
r_n(\kappa^{u, v}(\mu, \nu), \lambda) = ur_n(\mu, \lambda) + vr_n(\nu, \lambda) 
\]
for all probability measures $\mu, \nu, \lambda$ with finite moments of all orders. 
\end{proposition}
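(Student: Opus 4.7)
My plan is to handle (1) by direct verification against Theorem \ref{thm1}, and then derive (2) by lifting the same computation to the formal-power-series setting used to define c-monotone cumulants in Section \ref{Cumulants112}.

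For (1), set $H_t^\kappa(z) := u H_{\mu_t}(z) + v H_{\nu_t}(z) + (1-u-v)z$ and $F_t(z) := H_{\lambda_t}(z)$; by hypothesis $\{F_t\}$ is already a composition semigroup. Applying the c-monotone semigroup identity $H_{t+s} = H_t \circ F_s - F_s + H_s$ from Proposition \ref{asso111} to each given pair, I compute
\begin{align*}
H_t^\kappa(F_s(z)) - F_s(z) + H_s^\kappa(z) &= u\bigl[H_{\mu_t}(F_s(z)) - F_s(z) + H_{\mu_s}(z)\bigr] \\
&\quad + v\bigl[H_{\nu_t}(F_s(z)) - F_s(z) + H_{\nu_s}(z)\bigr] + (1-u-v)z \\
&= u H_{\mu_{t+s}}(z) + v H_{\nu_{t+s}}(z) + (1-u-v)z = H_{t+s}^\kappa(z),
\end{align*}
so the hypotheses of Theorem \ref{thm1} are met (weak continuity in $t$ is inherited from the two input semigroups). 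Differentiating $H_t^\kappa$ at $t=0$ is immediate by linearity: $A_1^\kappa(z) = u A_1^{\mu,\lambda}(z) + v A_1^{\nu,\lambda}(z)$, while the right component is $\lambda_t$ and so contributes $A_2^\lambda$ unchanged.

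For (2), I would repeat the same computation in the ring of formal power series in $1/z$ used in Section \ref{Cumulants112}: given $\mu,\nu,\lambda \in \pmo$, the formal moment series $H_{(\mu,\lambda)}(t,z)$ and $H_{(\nu,\lambda)}(t,z)$ built from (\ref{cum18})--(\ref{cum22}) form formal c-monotone semigroups with common right component $H_\lambda(t,z)$ and satisfying the semigroup identity (\ref{eq50}), and their vector fields are the cumulant generating functions $A_{(\mu,\lambda)}$, $A_{(\nu,\lambda)}$, $A_\lambda$. The algebraic computation of part (1) is purely formal and carries over verbatim, so
\[
H^\kappa_{(\mu,\nu,\lambda)}(t,z) := u H_{(\mu,\lambda)}(t,z) + v H_{(\nu,\lambda)}(t,z) + (1-u-v)z
\]
is a formal c-monotone semigroup with right component $H_\lambda(t,z)$ and left-component vector field $u A_{(\mu,\lambda)} + v A_{(\nu,\lambda)}$. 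At $t=1$ this left component coincides with $H_{\kappa^{u,v}(\mu,\nu)}$ by the definition of $\kappa^{u,v}$, so from the Laurent expansion (\ref{cum15}) I read off $r_n(\kappa^{u,v}(\mu,\nu),\lambda) = u r_n(\mu,\lambda) + v r_n(\nu,\lambda)$ by matching coefficients of $1/z^{n-1}$.

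The only mild obstacle is a bookkeeping one: I must know that the vector-field$\leftrightarrow$semigroup correspondence given analytically by Theorem \ref{thm1} has a formal-power-series analog, i.e., that a pair of formal series of the form $-\sum_n r_n/z^{n-1}$ determines a unique formal c-monotone semigroup through the differential equations (\ref{diff1})--(\ref{diff2}). This is a coefficient-by-coefficient Picard recursion with no analytic subtleties, and it is precisely the same uniqueness already invoked in the additivity proof following Definition \ref{momentcumulant}, so no new machinery is needed.
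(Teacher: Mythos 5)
Your proposal is correct and takes essentially the same route as the paper: part (1) is established by exactly the same linear-combination computation verifying $H^\kappa_{t+s} = H^\kappa_t\circ F_s - F_s + H^\kappa_s$ (using that the coefficients $u$, $v$, $1-u-v$ sum to $1$), with the vector-field identity read off by differentiating at $t=0$. Your formal-power-series argument for part (2) is just a careful unpacking of the paper's one-line remark that it ``follows by the definition of c-monotone cumulants,'' and the uniqueness of the formal semigroup you invoke is indeed the same one used in the paper's additivity proof.
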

\begin{proof}
Denote by $H_t ^{\mu}$ and $H_t ^{\nu}$ the reciprocal Cauchy transforms of $\mu_t$ and $\nu_t$, respectively.
\begin{equation*}
\begin{split}
H_{\kappa ^{u, v}(\mu_t, \nu_t)}(F_s) - F_s + H_{\kappa ^{u, v}(\mu_s, \nu_s)} 
       &= u H_{t} ^{\mu}(F_s) + v H_{t} ^{\nu}(F_s) + (1 - u - v)F_s - F_s + u H_{s} ^{\mu} + v H_{t} ^{\nu} \\              &~~~+ (1 - u - v)z \\ 
       &= u(H_{t} ^{\mu}(F_s) - F_s + H_{s} ^{\mu}) + v(H_{t} ^{\nu}(F_s) - F_s + H_{s} ^{\nu}) +(1 - u - v)z  \\
       &= u H_{t+s} ^{\mu} + v H_{t+s} ^{\nu} +(1 - u - v)z \\    
       &= H_{\kappa^{u, v} (\mu_{t + s}, \nu_{t+s})}.  
\end{split}
\end{equation*}
The relation for the vector fields follows immediately. The second statement for cumulants follows from the definition of c-monotone cumulants.   
\end{proof}
We obtain a nontrivial property of the c-monotone cumulants. 
\begin{corollary}\label{rel1}
We have the following relation between the additive Boolean convolution and the c-monotone cumulants: 
\begin{equation}
r_n(\mu \uplus \nu, \lambda) = r_n(\mu, \lambda) + r_n(\nu, \lambda)  
\end{equation}
for all probability measures $\mu, \nu, \lambda$ with finite moments of all orders.
\end{corollary}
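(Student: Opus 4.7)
The plan is to obtain this corollary as an immediate specialization of statement (2) of the preceding proposition, setting $u = v = 1$. The key observation is that $\kappa^{1,1}(\mu, \nu) = \mu \uplus \nu$. This can be read off directly from the defining relation
\begin{equation*}
H_{\kappa^{u,v}(\mu,\nu)}(z) = uH_\mu(z) + vH_\nu(z) + (1-u-v)z,
\end{equation*}
since substituting $u = v = 1$ gives $H_\mu(z) + H_\nu(z) - z$, which is the reciprocal Cauchy transform of $\mu \uplus \nu$ by the standard characterization of Boolean convolution. Equivalently, in the notation $\mu^{\uplus t}$, we have $\mu^{\uplus 1} = \mu$ and $\nu^{\uplus 1} = \nu$, so $\kappa^{1,1}(\mu, \nu) = \mu^{\uplus 1} \uplus \nu^{\uplus 1} = \mu \uplus \nu$.

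Once this identification is in hand, the corollary follows by a single substitution: apply the cumulant formula from statement (2) of the proposition, valid for all probability measures with finite moments of all orders, and read
\begin{equation*}
r_n(\mu \uplus \nu, \lambda) = r_n(\kappa^{1,1}(\mu, \nu), \lambda) = r_n(\mu, \lambda) + r_n(\nu, \lambda).
\end{equation*}

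There is no genuine obstacle to carry through: the entire content of the corollary is already packaged into statement (2) of the preceding proposition, and we are simply choosing a canonical pair of parameters. The mild subtlety worth flagging, if desired, is that the cumulant formula is proved through the semigroup construction in statement (1), but since each $r_n(\mu, \lambda)$ is a polynomial in the moments $m_k(\mu)$ and $m_k(\lambda)$ for $k \leq n$ (by Definition \ref{momentcumulant}), the identity extends from the semigroup setting to arbitrary probability measures with finite moments of all orders, which is exactly the generality asserted in statement (2).
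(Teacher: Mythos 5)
Your proposal is correct and is exactly how the paper intends the corollary to be read: it is the specialization $u=v=1$ of statement (2) of the preceding proposition, using $\kappa^{1,1}(\mu,\nu)=\mu\uplus\nu$ (equivalently $H_\mu+H_\nu-z=H_{\mu\uplus\nu}$), which is why the paper states it without a separate proof. Your closing remark about the identity being polynomial in moments and hence extending to all of $\pmo$ is a reasonable reading of the generality claimed in the proposition.
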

\begin{remark}
(1) Since $(\mu, \delta_0) \rhd (\nu, \delta_0) = (\mu \uplus \nu, \delta_0)$ and $r_n(\mu, \delta_0)$ is the $n$-th Boolean cumulant, this corollary is trivial for $\lambda = \delta_0$. \\
(2) 
The uniqueness of Boolean cumulants follows from axioms (C1) and (C2') (see Section \ref{Cum}). In the present case, $\{r_n(\cdot, \lambda) \}_{n \geq 1}$ satisfies (C1) and (C2), but do not satisfy (C2') for any $\lambda \neq \delta_0$. 
\end{remark}

\begin{example} With the above methods (1)-(6) and with examples of Boolean or monotone convolution semigroups in the literature, we can construct many examples of c-monotone convolution semigroups. We give an important example among them. 
Consider the Kesten distribution $\mu_{\sigma^2, r}$ characterized by
\begin{equation}\label{Kesten1}
G_{\mu_{\sigma ^2, r}}(z) := \frac{\sqrt{z^2 - 2\sigma^2} + (1 - r)z}{(2  - r)z^2 - 2\sigma^2 r}.  
\end{equation}
$\mu_{\sigma^2, 1}(dx) = \frac{1}{\pi\sqrt{2\sigma^2 - x^2}}dx$ is the centered arcsine law with variance $\sigma^2$. 
We note $\mathcal{U}_{r}(\mu_{\sigma^2, 1}) = \mu_{\sigma^2, r}$. Then $\{(\mu_{t, r}, \mu_{t, 1}) \}_{t \geq 0}$ is a c-monotone convolution semigroup 
since $\{ \mu_{t, 1} \}_{t \geq 0}$ is a monotone convolution semigroup. This example is connected with the central limit measure for the c-monotone convolution (see Section \ref{Lim}). 
\end{example}
In this section, we have shown constructions of c-monotone convolution semigroups and properties of c-monotone cumulants.  
By the way, we can prove similar results in the conditionally free case. Among them, the following is interesting. 
We omit the proof.  
\begin{proposition}\label{rel2}
Let $R_n(\mu, \nu)$ be the c-free cumulants of $(\mu, \nu)$. Then the identity 
\begin{equation}
R_n(\mu \uplus \nu, \lambda) = R_n(\mu, \lambda) + R_n(\nu, \lambda)
\end{equation}
holds. 
\end{proposition}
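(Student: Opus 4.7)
The plan is a direct computation using the defining equation (\ref{re14}) together with the well-known linearization of the Boolean convolution. First I would recall the identity
$$H_{\mu \uplus \nu}(z) = H_\mu(z) + H_\nu(z) - z,$$
which is just the characterizing property of the Boolean convolution (the so-called $K$-transform $z - H_\mu(z)$ is additive). Equivalently, it follows by combining (\ref{boolean1}) with the additivity (\ref{cum45}) of the c-free $\phi$-transform in the special case $\nu_1 = \nu_2 = \delta_0$.

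Next I would apply (\ref{re14}) to the three pairs $(\mu,\lambda)$, $(\nu,\lambda)$ and $(\mu \uplus \nu,\lambda)$:
\begin{align*}
H_\mu(z) &= z - \phi_{(\mu,\lambda)}(H_\lambda(z)), \\
H_\nu(z) &= z - \phi_{(\nu,\lambda)}(H_\lambda(z)), \\
H_{\mu \uplus \nu}(z) &= z - \phi_{(\mu \uplus \nu,\lambda)}(H_\lambda(z)).
\end{align*}
Plugging these into the Boolean identity and cancelling a factor of $z$ on both sides yields
$$\phi_{(\mu \uplus \nu,\lambda)}(H_\lambda(z)) \;=\; \phi_{(\mu,\lambda)}(H_\lambda(z)) + \phi_{(\nu,\lambda)}(H_\lambda(z)).$$

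To conclude, I would transfer this identity from the composition $H_\lambda(z)$ to an arbitrary argument. Working at the level of formal Laurent series at infinity, $H_\lambda(z) = z + O(1)$ is invertible with respect to composition, so the above identity is equivalent to the formal power series identity $\phi_{(\mu \uplus \nu,\lambda)} = \phi_{(\mu,\lambda)} + \phi_{(\nu,\lambda)}$, and extracting the coefficient of $z^{n-1}$ in $R_{(\cdot,\lambda)}(z) = \phi_{(\cdot,\lambda)}(1/z)$ gives the claim. I do not anticipate a substantive obstacle: the only subtlety worth a line is that since $\mu, \nu, \lambda$ have finite moments of all orders the manipulation is legitimate either as an identity of formal power series or (when supports are compact) as an identity of analytic functions on a sufficiently small neighborhood of infinity where $H_\lambda$ is biholomorphic onto its image.
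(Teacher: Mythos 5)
Your argument is correct, and in fact the paper gives no proof of this proposition at all — it only remarks that ``the proof is obtained by computation and we omit it,'' so your computation supplies exactly the omitted argument in the natural way: combine the Boolean identity $H_{\mu\uplus\nu}=H_\mu+H_\nu-z$ (which, as you note, follows from (\ref{boolean1}) and (\ref{cum45}) with second components $\delta_0$) with the defining relation (\ref{re14}) for the three pairs, and then invert the composition with $H_\lambda$. The only cosmetic slip is the phrase ``cancelling a factor of $z$'' — you are subtracting the summand $z$, not dividing by it — and your closing remark correctly handles the one genuine subtlety, namely that $H_\lambda(z)=z+O(1)$ is compositionally invertible as a formal Laurent series at infinity (or biholomorphic near infinity in the compactly supported case), so that the identity of the composites $\phi_{(\cdot,\lambda)}\circ H_\lambda$ forces the identity of the $\phi$'s themselves and hence of all coefficients $R_n(\cdot,\lambda)$.
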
 
Finally we obtain the following relation between $r_n(\mu, \nu)$ and $R_n(\mu, \nu)$.  We note that we need a result from Section \ref{Cum}. 
\begin{theorem}
There exist polynomials $P_{n,k}$ of $n- k$ variables for $2 \leq k \leq n-1$, $n \geq 3$ such that 
\begin{equation}
r_n (\mu, \nu) = R_n(\mu, \nu) + \sum_{k=2}^{n-1} P_{n,k}(m_1(\nu), \cdots, m_{n-k}(\nu)) R_k(\mu, \nu). 
\end{equation} 
We note that $r_1(\mu, \nu) = R_1(\mu, \nu) = m_1(\mu)$ and $r_2(\mu, \nu) = R_2(\mu, \nu) = m_2(\mu) - m_1 (\mu)^2$ for $n = 1,2$. 
Roughly, $r_n(\mu, \nu)$ is expressed by a linear combination of $R_n(\mu, \nu)$ with  polynomial coefficients $m_k(\nu)$. 
\end{theorem}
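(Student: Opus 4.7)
The plan is to derive the claim by writing the formal power series $z - H_\mu(z)$ in two different ways and matching coefficients. On the c-free side, (\ref{re14}) gives
\begin{equation*}
z - H_\mu(z) = \phi_{(\mu,\nu)}(H_\nu(z)) = \sum_{n \geq 1} R_n(\mu,\nu)\, G_\nu(z)^{n-1}.
\end{equation*}
On the c-monotone side, consider the formal c-monotone convolution semigroup $\{(\mu_t,\nu_t)\}_{t \geq 0}$ with $(\mu_1,\nu_1)=(\mu,\nu)$, constructed through its moment generating functions as in Section \ref{condmonocum1}. Integrating $\frac{d}{dt}H_{\mu_t}(z) = A_{(\mu,\nu)}(H_{\nu_t}(z))$ from $t=0$ to $t=1$, together with $A_{(\mu,\nu)}(w) = -\sum_{n \geq 1} r_n(\mu,\nu)\, w^{1-n}$, yields
\begin{equation*}
z - H_\mu(z) = \sum_{n \geq 1} r_n(\mu,\nu) \int_0^1 G_{\nu_s}(z)^{n-1}\,ds.
\end{equation*}
Equating the two expressions gives the key formal identity
\begin{equation*}
\sum_{n \geq 1} R_n(\mu,\nu)\, G_\nu(z)^{n-1} = \sum_{n \geq 1} r_n(\mu,\nu) \int_0^1 G_{\nu_s}(z)^{n-1}\,ds,
\end{equation*}
from which the desired relation will be extracted.

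Next I extract the coefficient of $z^{-(n-1)}$ from both sides. Since $G_{\nu_s}(z)^{k-1} = z^{-(k-1)}(1 + O(1/z))$, only indices $k \leq n$ contribute, and the coefficient of $z^{-(n-1)}$ in $G_{\nu_s}(z)^{k-1}$ is a fixed polynomial in $m_1(\nu_s),\ldots, m_{n-k}(\nu_s)$, equal to $1$ when $k=n$. Formula (\ref{cum22}) expresses each $m_j(\nu_s)$ as a polynomial in $s$ whose coefficients are polynomials in the monotone cumulants $r_1(\nu),\ldots,r_j(\nu)$, which in turn are polynomials in $m_1(\nu),\ldots,m_j(\nu)$ by axiom (K3). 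Integrating over $s \in [0,1]$ therefore produces polynomial expressions $\tilde{P}_{n,k}(m_1(\nu),\ldots, m_{n-k}(\nu))$ and $\tilde{Q}_{n,k}(m_1(\nu),\ldots, m_{n-k}(\nu))$ and the identity
\begin{equation*}
R_n(\mu,\nu) + \sum_{k=2}^{n-1} \tilde{P}_{n,k}\,R_k(\mu,\nu) = r_n(\mu,\nu) + \sum_{k=2}^{n-1} \tilde{Q}_{n,k}\,r_k(\mu,\nu).
\end{equation*}
The $R_1$ and $r_1$ terms drop out for $n \geq 2$ because $G_\nu(z)^0 = 1$ contributes nothing at order $z^{-(n-1)}$; this is why no $R_1$ enters the final formula.

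I then solve for $r_n$ and proceed by induction on $n$, the base cases $n=1,2$ being the immediate equalities $r_n = R_n$. Substituting the hypothesis $r_k = R_k + \sum_{j=2}^{k-1} P_{k,j}(m_1(\nu),\ldots,m_{k-j}(\nu))\, R_j$ for each $k < n$ into the right-hand side and collecting terms yields
\begin{equation*}
r_n = R_n + \sum_{k=2}^{n-1}(\tilde P_{n,k} - \tilde Q_{n,k})\, R_k - \sum_{j=2}^{n-2}\left(\sum_{k=j+1}^{n-1} \tilde Q_{n,k}\, P_{k,j}\right)R_j;
\end{equation*}
the coefficient of $R_n$ equals $1$, and every $m_i(\nu)$ appearing in the coefficient of $R_j$ satisfies $i \leq \max(n-k,\, k-j) \leq n-j$ (using $j < k \leq n-1$), which is exactly the bound required for $P_{n,j}$. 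The main obstacle is the combinatorial bookkeeping in this induction step, certifying that no coefficient of $R_j$ picks up a moment of $\nu$ of index greater than $n-j$; a secondary subtlety is that the argument must be read entirely at the level of formal power series in $1/z$ with polynomial coefficients in the moments, so that positivity or analytic considerations for $\{(\mu_t,\nu_t)\}$ are irrelevant, while the uniqueness of c-monotone cumulants from Section \ref{Cum} ensures that the $r_n$ produced by this formal-semigroup procedure agree with those of Definition \ref{momentcumulant}.
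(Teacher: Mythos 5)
Your proposal is correct, but it takes a genuinely different route from the paper. The paper's proof is an abstract one: it observes that for fixed $\nu$ both families $r_n(\cdot,\nu)$ and $R_n(\cdot,\nu)$ are additive with respect to the Boolean convolution (Corollary \ref{rel1} and Proposition \ref{rel2}), so that they are both ``cumulant sequences'' for $\uplus$ in the sense of Section \ref{Cum}; the argument of Proposition \ref{trans2} (1) then forces them to differ by a unipotent lower-triangular linear transformation with coefficients that are polynomials in the moments of $\nu$, the dilation substitution $(\mu,\nu)\mapsto(D_\lambda\mu,D_\lambda\nu)$ pins each coefficient down to a weighted-homogeneous polynomial of degree $n-k$, hence one in $m_1(\nu),\dots,m_{n-k}(\nu)$, and a separate remark is needed to rule out an $R_1$-term. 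You instead equate the two generating-function identities $z-H_\mu=\sum_n R_n(\mu,\nu)G_\nu^{\,n-1}$ and $z-H_\mu=\sum_n r_n(\mu,\nu)\int_0^1 G_{\nu_s}^{\,n-1}\,ds$ coefficient by coefficient, obtaining an explicit triangular system which you invert by induction while tracking which moments of $\nu$ can occur. Your computation is sound: the flow equation (\ref{cum23}) integrates term-by-term since every coefficient of the formal series is a polynomial in $t$ via (\ref{cum22}), the exclusion of $k=1$ at order $z^{-(n-1)}$ for $n\ge 2$ is exactly right and disposes of the $R_1$ issue automatically, and the degree estimate $\max(n-k,\,k-j)\le n-j-1<n-j$ for $j<k\le n-1$ closes the induction. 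What the paper's route buys is brevity and conceptual clarity given the machinery already in place (it reuses the Boolean additivity results, which your argument does not need at all); what your route buys is a self-contained, constructive proof that in principle yields a recursion for the $P_{n,k}$ and needs no extra argument for the absence of $R_1$.
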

\begin{proof}
The existence of $P_{n,k}$ follows from Corollary \ref{rel1}, Proposition \ref{rel2} and the argument used in Proposition \ref{trans2} (1). 
We replace $(\mu, \nu)$ with $(D_\lambda \mu, D_\lambda \nu)$ and compare the powers of $\lambda$, to conclude that $P_{n, k}$ only depends on $m_1(\nu), \cdots, m_{n-k}(\nu)$.  The reason why $R_1(\mu, \nu)$ does not appear is that a term of the form $Q_n(m_1(\nu), \cdots, m_p(\nu))m_1(\mu)$ never appears in the moment-cumulant 
formulae in both c-monotone and c-free cases, except for the first order $r_1(\mu, \nu) = R_1(\mu, \nu) = m_1(\mu)$.  
\end{proof}
\begin{example}
The third cumulants for c-monotone and c-free cases are given by
\begin{equation*}
\begin{split}
&r_3(\mu, \nu) = m_3(\mu) - 2m_2(\mu)m_1(\mu) -\frac{1}{2}m_1(\nu)(m_2(\mu) - m_1(\mu)^2) +m_1(\mu)^3, \\
&R_3(\mu, \nu) = m_3(\mu) - 2m_2(\mu)m_1(\mu) -m_1(\nu)(m_2(\mu) - m_1(\mu)^2) +m_1(\mu)^3. 
\end{split}
\end{equation*} 
Therefore,  
\begin{equation}
r_3(\mu, \nu) = R_3(\mu, \nu) + \frac{1}{2}m_1(\nu)R_2(\mu, \nu). 
\end{equation}
\end{example}


\section{Infinite divisibility}\label{Inf4}
In this section, we define infinite divisibility for the c-monotone convolution and 
we characterize the infinite divisible distributions with compact supports. 
\begin{definition}
A pair of probability measures $(\mu, \nu)$ on $\real$ is said to be (additively) c-monotone infinitely divisible if for any $n \geq 1$ there exists a pair of probability measures $(\mu_n, \nu_n)$ such that $(\mu, \nu) = (\mu_n, \nu_n)^{\rhd n}$.  
\end{definition}

The above definition might become easy to understand in terms of random variables in a $C^*$-algebra: 
\begin{definition}
In a $C^\ast$-algebraic probability space $(\mathcal{A}, \varphi, \psi)$ equipped with two states, we say that a self-adjoint operator 
$X$ has a c-monotone infinitely divisible distribution if for any $n \geq 1$ there exist a $C^\ast$-algebraic probability space $(\mathcal{A}_n, \varphi_n, \psi_n)$ and identically distributed, c-monotone independent random variables $X_1, \cdots, X_n \in \mathcal{A}_n$ such that $X$ has the same distribution as $X_1 + \cdots + X_n$ with respect to the two states. 
\end{definition}
 
These definitions are the same for compactly supported probability measures, since we know a canonical realization of 
c-monotone independence in Section \ref{Condi} and since $\mu_n, \nu_n$ are compactly supported whenever $\mu, \nu$ are (see Lemma \ref{compact1}).   
In this paper, we focus on the convolution of probability measures, and hence, use the former definition. 

\begin{lemma}\label{compact1} 
Let $(\mu_3, \nu_3)$ be the c-monotone convolution of $(\mu_1, \nu_1)$ and $(\mu_2, \nu_2)$. \\
(1) Let $(a_i, \eta_i)$ and $(b_i, \xi_i)$ respectively denote the pairs of real numbers and finite measures appearing in the representations in (\ref{nev}) for $\mu_i$ and $\nu_i$. 
Then $\supp \eta_2 \subset \supp \eta_3$ and  $\supp \xi_2 \subset \supp \xi_3$. \\
(2) If $\mu_i$ and $\nu_i$ $(i = 1, 2)$ are compactly supported, also $\mu_3$ and $\nu_3$ are.  
\end{lemma}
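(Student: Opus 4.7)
Both parts rest on the two identities
\[
H_{\mu_3} = H_{\mu_1}\circ H_{\nu_2} + H_{\mu_2} - H_{\nu_2}, \qquad H_{\nu_3} = H_{\nu_1}\circ H_{\nu_2},
\]
coming from Proposition \ref{asso111} and (\ref{mur}). For part~(1) my plan is to exploit the elementary bound $\im H(z)\geq\im z$ valid for every reciprocal Cauchy transform $H$, which one reads off directly from (\ref{nev}). Applied to $H_{\mu_1}$ at the point $H_{\nu_2}(z)\in\com+$ it gives
\[
\im H_{\mu_3}(z)-\im H_{\mu_2}(z) \;=\; \im H_{\mu_1}(H_{\nu_2}(z)) - \im H_{\nu_2}(z) \;\geq\; 0 \qquad (z\in\com+),
\]
and the same remark applied to $H_{\nu_3}=H_{\nu_1}\circ H_{\nu_2}$ yields $\im H_{\nu_3}\geq\im H_{\nu_2}$. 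In other words, the c-monotone correction $H_{\mu_2}-H_{\nu_2}$ keeps the imaginary part monotone, which is the key structural input.

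To turn these pointwise inequalities on $\com+$ into support inclusions, I would read the imaginary part of (\ref{nev}) as a Poisson integral. Writing $\tilde\eta_i(dt):=(1+t^{2})\,d\eta_i(t)$ (a positive finite measure), a direct computation from (\ref{nev}) gives
\[
\im H_{\mu_i}(z)-\im z \;=\; \pi\,(P_{\im z}\ast\tilde\eta_i)(\re z),
\]
where $P_y(s)=\tfrac{1}{\pi}\cdot\tfrac{y}{s^{2}+y^{2}}$ is the Poisson kernel on $\com+$. The inequality above then reads $P_y\ast\tilde\eta_3\geq P_y\ast\tilde\eta_2$ on $\real$ for every $y>0$, and since $P_y\ast\tilde\eta\to\tilde\eta$ weakly as $y\searrow 0$ and weak limits preserve nonnegativity, I deduce $\tilde\eta_3\geq\tilde\eta_2$ as positive measures, whence $\eta_3\geq\eta_2$ and a fortiori $\supp\eta_2\subset\supp\eta_3$. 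The identical argument applied to the $\xi_i$ (via $H_{\nu_3}=H_{\nu_1}\circ H_{\nu_2}$) yields $\xi_3\geq\xi_2$ and therefore $\supp\xi_2\subset\supp\xi_3$.

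For part~(2) the plan is to use the characterization that $\rho\in\pc$ precisely when $G_\rho$ is analytic in a neighborhood of $\infty$ on the Riemann sphere, equivalently when $H_\rho$ is meromorphic at $\infty$ with leading term $z$ (this is immediate from the Laurent expansion $G_\rho(z)=\sum_{n\geq 0}m_n(\rho)\,z^{-n-1}$). Under the compact-support hypothesis each of $H_{\mu_1},H_{\mu_2},H_{\nu_2}$ has this property, and since $H_{\nu_2}(z)/z\to 1$ at infinity the composition $H_{\mu_1}\circ H_{\nu_2}$ inherits meromorphicity at $\infty$ with leading term $z$. Adding $H_{\mu_2}$ and subtracting $H_{\nu_2}$ preserves the structure, so $H_{\mu_3}$ is meromorphic at $\infty$ with $H_{\mu_3}(z)\sim z$; consequently $G_{\mu_3}$ is analytic at $\infty$ and $\mu_3\in\pc$. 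The same reasoning (or Lemma \ref{estimate}) settles $\nu_3\in\pc$.

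The one substantive step is the passage in part~(1) from the Poisson-integral comparison $P_y\ast\tilde\eta_3\geq P_y\ast\tilde\eta_2$ for all $y>0$ to the measure comparison $\tilde\eta_3\geq\tilde\eta_2$; this is standard via weak boundary convergence but is the only place where one leaves purely formal manipulation of the identity from Proposition \ref{asso111}. Everything else is bookkeeping.
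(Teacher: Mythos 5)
Your proposal is correct and follows essentially the same route as the paper: part (1) in both cases comes down to the fact that $H_{\mu_1}\circ H_{\nu_2}-H_{\nu_2}$ contributes a nonnegative imaginary part on $\com+$ (the paper exhibits this via the explicit Nevanlinna decomposition of $H_{\mu_3}$ and invokes the Stieltjes inversion formula, which is the same boundary-limit mechanism as your Poisson-kernel argument), and part (2) is the identical observation that analyticity of the Cauchy transforms outside a ball is preserved. Your version of (1) incidentally yields the slightly stronger conclusion $\eta_3\geq\eta_2$ as measures, but the engine is the same.
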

\begin{proof}
(1) We only prove the claim for the first component since the fact for the second component is know in \cite{Mur3}. 
From a simple calculation, we obtain 
\begin{equation}
H_{\mu_3}(z) = a_{\mu_1} + a_{\mu_2} + z +  \int_{\real} \frac{1 + xH_{\nu_2}(z)}{x-H_{\nu_2}(z)} d\eta_1(x) + 
\int_{\real} \frac{1 + xz}{x-z} d\eta_2(x).  
\end{equation}
Applying the Stieltjes inversion formula, we have $\lim_{v \searrow 0} \int_a ^b \im H_{\mu_3}(u + iv)du = 0$ whenever 
$[a, b] \cap \supp \eta_3 = \emptyset$, which implies that $\supp \eta_2 \subset \supp \eta_3$. 

(2) If $G_{\mu_i}$ and $G_{\nu_i}$ ($i = 1, 2$) are analytic outside a ball, then $G_{\mu_1 \rhd_{\nu_2} \mu_2}$ and $G_{\nu_1 \rhd \nu_2}$ are also analytic outside a ball. 
\end{proof}

\begin{corollary} 
Let $\mu, \nu$ be probability measures with compact supports. An $n$-th root of $(\mu, \nu)$ for the c-monotone convolution is unique for any $n \geq 1$. 
\end{corollary}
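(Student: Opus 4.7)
The plan is to reduce the uniqueness statement to the additivity of c-monotone cumulants (and of monotone cumulants for the second component), together with the fact that a compactly supported probability measure is uniquely determined by its moments. Assume $(\mu_n, \nu_n)^{\rhd n} = (\mu, \nu)$. Applying the additivity theorem $r_k((\mu, \nu)^{\rhd n}) = n\, r_k(\mu, \nu)$ to the first component, and its monotone analogue $r_k(\nu^{\rhd n}) = n\, r_k(\nu)$ to the second, one immediately obtains
\begin{equation*}
r_k(\mu_n, \nu_n) = \frac{r_k(\mu, \nu)}{n}, \qquad r_k(\nu_n) = \frac{r_k(\nu)}{n}, \qquad k \geq 1.
\end{equation*}
All cumulants of the root are therefore prescribed by $(\mu, \nu)$.

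Next I would invert the moment-cumulant correspondences. The monotone moment-cumulant formula expresses each $m_k(\nu_n)$ as a universal polynomial in $r_1(\nu_n), \ldots, r_k(\nu_n)$; the c-monotone moment-cumulant formula expresses $m_k(\mu_n)$ as a universal polynomial in $r_1(\mu_n, \nu_n), \ldots, r_k(\mu_n, \nu_n)$ together with $r_1(\nu_n), \ldots, r_{k-1}(\nu_n)$. Hence every moment of $\mu_n$ and of $\nu_n$ is uniquely determined by the moments of $\mu$ and $\nu$.

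To conclude, I need compactly supported measures to be determined by their moments, which reduces the task to showing that the roots $\mu_n, \nu_n$ themselves have compact support; this is the main obstacle of the argument. Writing $\mu = \mu_1 \rhd_{\nu_n} \mu_n$ and $\nu = \nu_1 \rhd \nu_n$ with $(\mu_1, \nu_1) := (\mu_n, \nu_n)^{\rhd(n-1)}$, Lemma \ref{compact1}(1) gives the containment $\supp \eta(\mu_n) \subset \supp \eta(\mu)$ and $\supp \xi(\nu_n) \subset \supp \xi(\nu)$, where $\eta, \xi$ denote the Nevanlinna measures from \eqref{nev}. Since $\mu, \nu$ are compactly supported, their Nevanlinna measures are as well, and hence so are those of $\mu_n, \nu_n$. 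The representation \eqref{nev} then shows that $H_{\mu_n}, H_{\nu_n}$ are meromorphic outside a bounded set, and the Stieltjes inversion formula yields compact support for $\mu_n$ and $\nu_n$. This closes the moment problem and yields uniqueness of $(\mu_n, \nu_n)$.
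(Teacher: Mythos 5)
Your proof is correct and follows essentially the same route as the paper: compact support of the roots via Lemma \ref{compact1}, determination of all cumulants of the root by the (quasi-)additivity $r_k((\mu,\nu)^{\rhd n}) = n\,r_k(\mu,\nu)$, and then uniqueness from the determinacy of the moment problem for compactly supported measures. You in fact spell out the compact-support step more carefully than the paper does, correctly identifying that it is part (1) of Lemma \ref{compact1} (the containment of the Nevanlinna measures) that yields compactness of the root, rather than part (2).
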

\begin{proof}
Let $(\mu_n, \nu_n)$ be an $n$-th roof of $(\mu, \nu)$, i.e., $(\mu, \nu) = (\mu_n, \nu_n) ^{\rhd n}$. 
$\mu_n$ and $\nu_n$ are compactly supported from Lemma \ref{compact1}. 
With the power additivity of the monotone cumulants and the c-monotone cumulants, we have 
$r_k(\mu_n, \nu_n) = \frac{1}{n}r_k(\mu, \nu)$ and $r_k^M(\mu_n) = \frac{1}{n}r_k^M(\nu)$ for all $k \geq 1$. 
This implies the uniqueness. 
\end{proof}

We prove the c-monotone analogue for Theorem 13.6 of \cite{N-S1}. 
\begin{theorem}\label{correspondence}
Let $\mu, \nu$ be probability measures with compact support. 
The following statements are equivalent. 
\begin{itemize}
\item[(1)] $(\mu, \nu)$ is c-monotone infinitely divisible. 
\item[(2)] There exists a compactly supported, weakly continuous c-monotone convolution semigroup $\{(\mu_t, \nu_t) \}_{t \geq 0}$ with $(\mu_0, \nu_0) = (\delta_0, \delta_0)$ such that $(\mu_1, \nu_1) = (\mu, \nu)$. 
\item[(3)] Both $\{r_n(\mu, \nu) \}_{n \geq 2}$ and $\{r_n(\nu) \}_{n \geq 2}$ are positive definite sequences.  
\item[(4)] There exist compactly supported probability measures $\mu_N$, $\nu_N$ for each $N$ such that $(\mu_N, \nu_N)^{\rhd N}$ converges to $(\mu, \nu)$ weakly. 
\end{itemize} 
\end{theorem}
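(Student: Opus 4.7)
The plan is to establish the cyclic chain $(2) \Rightarrow (1) \Rightarrow (4) \Rightarrow (3) \Rightarrow (2)$. The first two implications are essentially tautological: from a semigroup one reads off $(\mu_{1/n}, \nu_{1/n})$ as an $n$-th root; and given (1), taking $(\mu_N, \nu_N)$ to be the unique compactly supported $N$-th root of $(\mu, \nu)$ (compact support being inherited by iterating Lemma \ref{compact1}(1) on the Nevanlinna measures) makes $(\mu_N, \nu_N)^{\rhd N} = (\mu, \nu)$ exactly, so (4) holds with trivial convergence.

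For $(3) \Rightarrow (2)$, I would invoke the Hamburger moment problem to realize the positive definite sequences $\{r_n(\mu, \nu)\}_{n \geq 2}$ and $\{r_n(\nu)\}_{n \geq 2}$ as moment sequences of positive finite measures $\sigma_1, \sigma_2$ on $\real$, which inherit compact support from that of $\mu, \nu$ (via the moment-cumulant formula (\ref{m-c11}), whose combinatorial structure forces exponential bounds $|r_n| \leq C^n$). Writing $d\tau_j := (1+x^2)^{-1} d\sigma_j$ and choosing $\gamma_j$ to match the first cumulants, I form the vector fields
\begin{equation*}
A_j(z) = -\gamma_j + \int_{\real} \frac{1+xz}{x-z} \, d\tau_j(x).
\end{equation*}
A Laurent expansion at infinity, using $\frac{1+xz}{x-z} = -x - \sum_{k \geq 0} (x^k + x^{k+2})/z^{k+1}$, verifies $A_1(z) = -\sum_{n \geq 1} r_n(\mu, \nu)/z^{n-1}$ and similarly for $A_2$. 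By the converse direction of Theorem \ref{thm1}, these integrate to a weakly continuous c-monotone convolution semigroup $\{(\mu_t, \nu_t)\}_{t \geq 0}$ with $(\mu_0, \nu_0) = (\delta_0, \delta_0)$. A semigroup differentiation argument in the style of Section \ref{condmonocum1} gives $r_n(\mu_t, \nu_t) = t\, r_n(\mu, \nu)$ and $r_n(\nu_t) = t\, r_n(\nu)$; evaluating at $t=1$ and using uniqueness of compactly supported measures from their moments (equivalently cumulants) yields $(\mu_1, \nu_1) = (\mu, \nu)$. Compact support of $(\mu_t, \nu_t)$ for all $t$ follows from compactness of $\supp \tau_j$ via \cite{Mur3} for the second component and an analogous Nevanlinna argument for the first.

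For $(4) \Rightarrow (3)$, the substantive step, I consider the rescaled generators $A_1^{(N)}(z) := N(H_{\mu_N}(z) - z)$ and $A_2^{(N)}(z) := N(H_{\nu_N}(z) - z)$, each a Pick function admitting a Nevanlinna representation
\begin{equation*}
A_j^{(N)}(z) = -\gamma_j^{(N)} + \int_{\real} \frac{1+xz}{x-z} \, d\tau_j^{(N)}(x)
\end{equation*}
for some real $\gamma_j^{(N)}$ and positive finite $\tau_j^{(N)}$. Using the recursion $H_{\mu_N^{(k)}} = H_{\mu_N^{(k-1)}} \circ H_{\nu_N} + H_{\mu_N} - H_{\nu_N}$ (Proposition \ref{asso111} plus Theorem \ref{assoc71}) together with the monotone recursion $H_{\nu_N^{(k)}} = H_{\nu_N^{(k-1)}} \circ H_{\nu_N}$, a discrete Trotter-type estimate turns the hypothesized weak convergence $(\mu_N, \nu_N)^{\rhd N} \to (\mu, \nu)$ into uniform convergence $A_j^{(N)} \to A_j$ on compact subsets of $\com^+$, where $A_j$ are vector fields of the form (\ref{eq31}). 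The Nevanlinna representation passes to the limit with $\tau_j^{(N)} \to \tau_j$ weakly to a positive finite measure $\tau_j$, and computing the Laurent expansion of $A_j$ reads off $r_n(\mu, \nu) = \int x^{n-2}(1+x^2)\, d\tau_1(x)$ for $n \geq 2$, and analogously for $\nu$. Hence $\{r_n(\mu, \nu)\}_{n \geq 2}$ and $\{r_n(\nu)\}_{n \geq 2}$ are Hamburger moment sequences of positive finite measures, i.e., positive definite as claimed.

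The main obstacle lies in the Trotter-type step of $(4) \Rightarrow (3)$: converting weak convergence of $(\mu_N, \nu_N)^{\rhd N}$ into convergence of the rescaled Nevanlinna data $\tau_j^{(N)}$. Weak convergence by itself does not control tightness of these generating measures, so one must exploit the compact support of the limit $(\mu,\nu)$ together with Lemma \ref{compact1}(1), which confines the Nevanlinna measures to a fixed compact set uniformly in $N$. Coordinating the two coupled recursions — one monotone in $\nu_N$, one c-monotone in $\mu_N$ — through a joint discrete-to-continuous limit is what distinguishes this argument from its purely monotone analogue in \cite{Mur3}, and is the delicate technical point of the proof.
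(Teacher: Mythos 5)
Your cyclic chain and your handling of $(2)\Rightarrow(1)\Rightarrow(4)$ and of $(3)\Rightarrow(2)$ essentially follow the paper's route, but your $(4)\Rightarrow(3)$ contains a genuine gap. You route it through a ``discrete Trotter-type estimate'' that is supposed to convert weak convergence of $(\mu_N,\nu_N)^{\rhd N}$ into locally uniform convergence of the rescaled generators $N(H_{\mu_N}(z)-z)$ and $N(H_{\nu_N}(z)-z)$, followed by a weak limit of their Nevanlinna data. You yourself flag this as the delicate point and do not carry it out; as written, the key step is asserted rather than proved. It is also unnecessary. The paper's argument stays entirely at the level of moments: from the polynomial structure of $m_n\big((\mu_N,\nu_N)^{\rhd N}\big)$ in $N$ (Section \ref{condmonocum1}, as in the Poisson limit theorem) one obtains $r_n(\mu,\nu)=\lim_{N}N\,m_n(\mu_N)$ and $r_n(\nu)=\lim_{N}N\,m_n(\nu_N)$, whence for $a_1,\dots,a_n\in\comp$
\[
\sum_{j,k=1}^{n}a_j\overline{a_k}\,r_{j+k}(\mu,\nu)=\lim_{N\to\infty}N\int_{\real}\Big|\sum_{j=1}^{n}a_jx^j\Big|^{2}\,d\mu_N(x)\ \geq\ 0,
\]
and similarly for $\nu$. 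This two-line computation bypasses all of the complex analysis, the tightness issues for the measures $\tau_j^{(N)}$, and the coupling of the two recursions that you identify as the main obstacle.

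A secondary problem sits in your $(3)\Rightarrow(2)$: you claim the moment--cumulant formula (\ref{m-c11}) ``forces exponential bounds $|r_n|\leq C^n$'' and hence compact support of the representing measures $\sigma_j$. The paper explicitly disclaims knowing such a bound a priori, and inverting (\ref{m-c11}) does not obviously produce one (the inversion has alternating signs and rapidly growing coefficient counts). The correct order of operations, which is the paper's: run the construction with $\tau_j$ merely positive and finite (which is all the converse direction of Theorem \ref{thm1} requires), identify $(\mu_1,\nu_1)=(\mu,\nu)$ by matching moments of compactly supported measures, and only then deduce compactness of $\supp\tau_j$ from (\ref{eq6.0}), (\ref{eq9}) and Lemma \ref{compact1}(1); compact support of the whole semigroup follows. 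The remainder of your $(3)\Rightarrow(2)$ (vector fields, Laurent expansion, $r_n(\mu_t,\nu_t)=t\,r_n(\mu,\nu)$, moment determinacy) matches the paper.
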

\begin{proof}
The implications $(2) \Rightarrow (1) \Rightarrow (4)$ follow from Lemma \ref{compact1}. Now we prove $(4) \Rightarrow (3)$. 
We note that 
\begin{align}
&r_n(\nu) = \lim_{N \to \infty} N \int_{\real} x^n \nu_N(dx), \\ 
&r_n(\mu, \nu) = \lim_{N \to \infty} N \int_{\real} x^n \mu_N(dx). 
\end{align}
For $a_1, \cdots, a_n \in \comp$, we have 
\begin{equation}\label{eq10}
\begin{split}
\sum_{j,k = 1}^{n}a_j\bar{a_k}r_{k+j}(\mu, \nu) &= \lim_{N \to \infty} N \sum_{j,k = 1}^{n}a_j\bar{a_k} \int_{\real} x^{k + j} \mu_N(dx) \\
               &= \lim_{N \to \infty} N \Big{|}\sum_{j = 1}^{n}a_j\int_{\real} x^{j} \mu_N(dx)\Big{|}^2 \\ 
               &\geq 0. 
\end{split}
\end{equation}
Next we prove the implication $(3) \Rightarrow (2)$. 
We do not know a priori the existence of $R > 0$ such that 
$|r_n(\mu, \nu)| \leq R^n$ and $|r_n(\nu)| \le R^n$. 
At least, however, there exist positive finite measures $\tau_1, \tau_2$ such that 
\begin{align}
&A_1 (z) := - r_1(\mu, \nu) + \int_{\real}\frac{1 + xz}{x - z} \tau_1(dx) = -\sum_{n = 1} ^{\infty}\frac{r_n(\mu, \nu)}{z^{n-1}}, \\
&A_2 (z) := - r_1(\nu) + \int_{\real}\frac{1 + xz}{x - z} \tau_2(dx) = -\sum_{n = 1} ^{\infty}\frac{r_n(\nu)}{z^{n-1}}
\end{align}
in the sense of asymptotic expansion. 
We define two functions $H_t$, $F_t$ and a weakly continuous c-monotone convolution semigroup $\{ (\mu_t, \nu_t) \}_{t \geq 0}$ using Theorem \ref{thm1}. We obtain $r_n(\mu_t, \nu_t) = tr_n(\mu, \nu)$ and $r_n(\nu_t) = tr_n(\nu)$ from the power additivity of the monotone cumulants and the c-monotone cumulants. 
Therefore, $\mu_1$ and $\nu_1$ have the same moments as $\mu$ and $\nu$, respectively. Since $\mu$ and $\nu$ are compactly supported, $(\mu_1, \nu_1) = (\mu, \nu)$; moreover, $\eta_1$ and $\xi_1$ are compactly supported. From Lemma \ref{compact1} (1), $\eta_t$ and $\xi_t$ are compactly supported for $0 \leq t \leq 1$. Recalling the equalities (\ref{eq6.0}) and (\ref{eq9}), we conclude that 
$\supp \tau_1 \subset \supp \eta_{\delta}$ and $\supp \tau_2 \subset \supp \xi_{\delta}$ for sufficiently small 
$\delta > 0$. Therefore, $\supp \tau_j$ is compact.  
It is immediate that $\mu_t$ and $\nu_t$ are compactly supported for all $0 \leq t < \infty$ from Lemma \ref{compact1} (2). 
\end{proof}

\section{Remarks and discussions on infinite divisibility}\label{discuss} 
Let $\disc$ be the unit disc in the complex plane. 
Theorem 1.1 of \cite{Be-Po} says that if a semigroup of analytic maps $\phi_t(z)$ ($t \geq 0$) defined on $\disc$ with $\phi_0(z) = z$ is continuous in $[0, \infty) \times \disc$, there exists an analytic vector field as a generator of the semigroup. As a result, the semigroup also belongs to $ C^\omega([0, \infty) \times \disc)$.  
There is a different proof of Theorem \ref{thm1} based on the above (and its generalization). 
This approach was used by Bercovici in \cite{Ber1} for multiplicative monotone convolutions (see also Franz's argument in \cite{Fra}). 

Weak convergence of probability measures on the real line is equivalent to pointwise convergence of the reciprocal Cauchy transforms as shown in \cite{Maa}. On the unit circle, the weak convergence is equivalent to pointwise convergence of $\eta_{\mu}(z):= 1 - \frac{z}{G_\mu (\frac{1}{z})}$, $z \in \disc$. This fact can be proved in the same idea as \cite{Maa}. 
Therefore, a given weakly continuous convolution semigroup $\mu_t$ on $\real$ (resp. on $\tor$) with $\mu_0 = \delta_0$ (resp.  $\mu_0 = \delta_1$) has a continuous $H_{\mu_t}(z)$ (resp. $\eta_{\mu_t}(z)$) for each $z$. The following fact is needed to apply Theorem 1.1 of \cite{Be-Po}.  
\begin{proposition}\label{prop10}
Let $\{\phi_t \}_{t \in I}$ be a family of analytic maps on $\disc$ parametrized by $t \in I$, where $I$ is an interval. We assume that the map $t \mapsto \phi_t(z)$ 
is continuous for each $z \in \disc$. Then the map $\phi: I \times \disc \to \disc$ defined by $\phi(t,z) = \phi_t(z)$ 
is continuous.  
\end{proposition}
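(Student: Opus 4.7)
The plan is to reduce joint continuity to pointwise continuity in $t$ together with equicontinuity in $z$ that is uniform in $t$. Since every $\phi_t$ maps $\disc$ into $\disc$, we have the uniform bound $|\phi_t(z)| \leq 1$ for all $t \in I$ and $z \in \disc$, so $\{\phi_t\}_{t \in I}$ is a uniformly bounded family of analytic functions on $\disc$.

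From this uniform bound I would derive a uniform Lipschitz estimate on compact subsets via Cauchy's integral formula. Concretely, fix a compact set $K \subset \disc$ and choose $r \in (0,1)$ with $K \subset \{|z| \leq r\}$, and pick $\rho$ with $r < \rho < 1$. For $z \in K$, the circle $\{|w - z| = \rho - r\}$ lies inside $\disc$, so
\begin{equation*}
|\phi_t'(z)| \leq \frac{1}{2\pi} \int_{|w-z| = \rho - r} \frac{|\phi_t(w)|}{|w-z|^2}\,|dw| \leq \frac{1}{\rho - r} =: M_K,
\end{equation*}
uniformly in $t \in I$. Consequently $|\phi_t(z) - \phi_t(z')| \leq M_K |z - z'|$ for all $z, z' \in K$ and all $t \in I$.

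Now given $(t_0, z_0) \in I \times \disc$ and $\varepsilon > 0$, pick a compact neighbourhood $K$ of $z_0$ in $\disc$ and split
\begin{equation*}
|\phi_t(z) - \phi_{t_0}(z_0)| \leq |\phi_t(z) - \phi_t(z_0)| + |\phi_t(z_0) - \phi_{t_0}(z_0)|.
\end{equation*}
The first term is at most $M_K |z - z_0| < \varepsilon/2$ whenever $z \in K$ is sufficiently close to $z_0$; the second is less than $\varepsilon/2$ when $t$ is sufficiently close to $t_0$, by the hypothesized continuity of $t \mapsto \phi_t(z_0)$. This gives the desired joint continuity at $(t_0, z_0)$.

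The main (minor) obstacle is really only the step producing uniform equicontinuity in $z$, and as indicated above this is handled directly by Cauchy's estimate applied to the uniformly bounded family $\{\phi_t\}$; no appeal to Montel or normal families is actually required. The same argument works verbatim if $I$ is replaced by any topological space, since the only property of $I$ used is that continuity of $t \mapsto \phi_t(z_0)$ makes sense.
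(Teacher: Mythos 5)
Your proof is correct. Note that the paper itself does not prove this proposition: the author states only that ``the proof is not difficult'' and defers it to the forthcoming paper \cite{Has4}, so there is no argument in the text to compare against. Your route --- uniform boundedness $|\phi_t|\leq 1$, a Cauchy-integral estimate giving $|\phi_t'(z)|\leq 1/(\rho-r)$ uniformly in $t$ on $\{|z|\leq r\}$, hence a $t$-uniform Lipschitz bound on a (convex) compact neighbourhood of $z_0$, and then the standard two-term splitting of $|\phi_t(z)-\phi_{t_0}(z_0)|$ --- is exactly the elementary argument one would expect here, and every step checks out; just make sure the compact set $K$ on which you invoke the Lipschitz estimate is convex (a closed disc $\{|z|\leq r\}$ with $|z_0|<r<1$ suffices), since integrating the derivative bound along the segment from $z_0$ to $z$ requires the segment to stay in $K$. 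Your closing observation that the interval structure of $I$ is never used, so the statement holds for an arbitrary topological parameter space, is also accurate.
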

The proof is not difficult; we will give a proof in \cite{Has4}.  
Since $\disc$ is analytically isomorphic to $\com+$, we can apply this to both additive and multiplicative convolutions.  

In the case of the c-monotone convolution, the functional relation for reciprocal Cauchy transforms is not only a composition semigroup: $F_{s + t} = F_s \circ F_t$ and $H_{t+s} = H_t \circ F_s - F_s + H_s$. We used Muraki's method to prove Theorem \ref{thm1}, but it is also possible to use a  method similar to Theorem 1.1 in \cite{Be-Po} for  
$(H_t,F_t)$. This method is useful especially for the multiplicative convolution and we will show it in \cite{Has4}.  

We proved in Theorem \ref{correspondence} the equivalence between infinite divisibility and the embedding of a measure into a convolution semigroup for compactly supported probability measures.  We also proved the positive definiteness of cumulants. This result 
is new even in the monotone case.


\section{Convolutions arising from the conditionally monotone convolution}\label{deform11}
The c-monotone convolution unifies the monotone and Boolean convolutions: 
\begin{gather}
(\mu, \mu) \rhd (\nu, \nu) = (\mu \rhd \nu, \mu \rhd \nu), \\
(\mu, \delta_0) \rhd (\nu, \delta_0) = (\mu \uplus \nu, \delta_0). 
\end{gather}
These are analogous to (\ref{free1}) and (\ref{boolean1}). We generalize these relations in this section 
in analogy with the c-free case. Therefore, we first explain the c-free case.

For a map $T$: $\pa \to \pa$, a new convolution $\boxplus_T$ can be defined by 
\begin{equation}\label{Bok}
(\mu \boxplus _T \nu, T\mu \boxplus T\nu) = (\mu, T\mu) \boxplus (\nu, T\nu). 
\end{equation}
A problem of Bo\.{z}ejko is to find all maps $T$: $\pa \to \pa$ such that  
\begin{equation}\label{Bo}
T(\mu \boxplus _T \nu) = T\mu \boxplus T\nu. 
\end{equation}
This relation exactly says that the graph $\{(\mu, T\mu); \mu \in \pa \}$ is closed under the c-free convolution. 
Once such $T$ is found, the new convolution is associative and commutative.
Moreover, this generalizes the free and Boolean convolutions.
Many maps satisfying (\ref{Bo}) were found in \cite{BW1,BW2,KW,KY,O1,O2}. 
 
Motivated by these works, we consider the following type of convolution: 
\begin{equation}\label{Ha}
(\mu \rt \nu, T\nu) = (\mu, \delta_0) \boxplus (\nu, T\nu). 
\end{equation} 
This relation is parallel to (\ref{Bok}) in terms of the c-monotone convolution: 
\begin{equation}\label{Ha}
(\mu \rt \nu, T\mu \rhd T\nu) = (\mu, T\mu) \rhd (\nu, T\nu). 
\end{equation} 
Clearly, this convolution includes Boolean and monotone convolutions 
if we take $T$ as $T\mu = \delta_0$ for all $\mu$ and $T = \text{Id}$, respectively. 
Now we characterize the associativity of the convolution $\rt$. 
\begin{proposition}\label{associativity}
$(1)$ $\rt$ is characterized by the equality 
\begin{equation}\label{aa}
H_{\mu \rt \nu} = H_\mu \circ H_{T\nu} + H_{\nu} - H_{T\nu}.
\end{equation}
$(2)$ The convolution $\rt$ becomes associative if and only if 
\begin{equation}\label{asso}
T(\mu \rt \nu) = T\mu \rhd  T\nu 
\end{equation}
for all $\mu$ and $\nu$. 
\end{proposition}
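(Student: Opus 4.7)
The plan is as follows. For part (1), I would simply unfold the definition (\ref{Ha}). Using the definition of the c-free convolution on the pair $(\mu, \delta_0) \boxplus (\nu, T\nu)$, the first component is $\mu \, {}_{\delta_0}\!\!\boxplus_{T\nu} \nu$, which by Definition \ref{c-monotoneprod1}(2) equals $\mu \rhd_{T\nu} \nu$. Thus $\mu \rt \nu = \mu \rhd_{T\nu} \nu$, and applying Proposition \ref{asso111} with first factor $\mu$ and second argument $T\nu$ immediately yields
\begin{equation*}
H_{\mu \rt \nu}(z) = H_\mu(H_{T\nu}(z)) + H_\nu(z) - H_{T\nu}(z).
\end{equation*}

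For part (2), I would apply (1) twice to rewrite both iterated convolutions. Expanding $H_{(\mu \rt \nu) \rt \lambda}$ by (1), substituting the formula for $H_{\mu \rt \nu}$, and using the characterization $H_{T\nu} \circ H_{T\lambda} = H_{T\nu \rhd T\lambda}$ of the monotone convolution from (\ref{mur}), one finds
\begin{equation*}
H_{(\mu \rt \nu) \rt \lambda}(z) = H_\mu(H_{T\nu \rhd T\lambda}(z)) - H_{T\nu \rhd T\lambda}(z) + H_{\nu \rt \lambda}(z),
\end{equation*}
while (1) applied directly gives
\begin{equation*}
H_{\mu \rt (\nu \rt \lambda)}(z) = H_\mu(H_{T(\nu \rt \lambda)}(z)) - H_{T(\nu \rt \lambda)}(z) + H_{\nu \rt \lambda}(z).
\end{equation*}
Canceling the common term $H_{\nu \rt \lambda}(z)$, associativity is equivalent to the functional identity
\begin{equation*}
H_\mu(H_{T\nu \rhd T\lambda}(z)) - H_{T\nu \rhd T\lambda}(z) = H_\mu(H_{T(\nu \rt \lambda)}(z)) - H_{T(\nu \rt \lambda)}(z)
\end{equation*}
holding for all $\mu \in \pa$ and all $z \in \com+$.

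The implication (\ref{asso}) $\Rightarrow$ associativity is then immediate since both sides of the displayed identity coincide. For the converse, I plan to exhibit a single $\mu_0$ for which $w \mapsto H_{\mu_0}(w) - w$ is injective on $\com+$; the natural candidate is the symmetric Bernoulli $\mu_0 = \tfrac{1}{2}(\delta_1 + \delta_{-1})$, which gives $G_{\mu_0}(z) = z/(z^2 - 1)$, hence $H_{\mu_0}(w) - w = -1/w$. Since reciprocal Cauchy transforms take values in $\com+ \subset \comp \setminus \{0\}$, the map $-1/w$ is injective on the relevant range. Substituting $\mu = \mu_0$ into the displayed identity therefore forces $H_{T\nu \rhd T\lambda}(z) = H_{T(\nu \rt \lambda)}(z)$ for all $z \in \com+$, i.e.\ $T(\nu \rt \lambda) = T\nu \rhd T\lambda$.

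The only step requiring care is this last one: the necessity of (\ref{asso}) relies on the freedom to vary $\mu$ over all probability measures to separate the two composite $H$-transforms. The symmetric Bernoulli trick handles this cleanly without any need for abstract rigidity arguments, so the argument should be short and essentially computational throughout.
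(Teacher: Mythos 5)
Your proof is correct and follows essentially the same route as the paper's: both parts expand the two iterated convolutions via the formula in (1) and reduce associativity to the identity $H_{T\nu}\circ H_{T\lambda} = H_{T(\nu\rt\lambda)}$, i.e.\ to (\ref{asso}). The one place you differ is the separation step in the ``only if'' direction: where the paper asserts that $H_\mu \circ H_{T\nu}\circ H_{T\lambda}(w) - H_\mu \circ H_{T(\nu\rt\lambda)}(w)$ ``clearly depends on $\mu$'' whenever the two arguments differ, you supply the explicit witness $\mu_0 = \tfrac12(\delta_{-1}+\delta_1)$, for which $H_{\mu_0}(w)-w=-1/w$ is injective on $\com+$; this makes that step fully rigorous and is a small improvement over the paper's wording.
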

\begin{remark}
(1) In many cases $T$ is only defined in a subset of $\pa$ such as $\pmo$. In such a case, the above Proposition holds if the subset is closed under the convolution $\rt$. Henceforth, we often state results about the convolution $\rt$ only for $T$: $\pa \to \pa$ if such a generalization of the domain of $T$ is trivial.  \\
(2) It seems to be not known whether the condition (\ref{Bo}) is a necessary condition for the associativity of $\boxplus_T$. 
\end{remark}
\begin{proof}
(1) The proof is easy. \\
(2) First we calculate $H_{(\mu \rt \nu) \rt \lambda}$ as  
\begin{equation}\label{c}
\begin{split}
H_{(\mu \rt \nu) \rt \lambda} &= H_{\mu \rt \nu} \circ H_{T\lambda} + H_\lambda - H_{T\lambda} \\
         &= H_\mu \circ H_{T\nu} \circ H_{T\lambda} + H_\nu \circ H_{T\lambda} -H_{T\nu}\circ H_{T\lambda} + H_\lambda - H_{T\lambda}. 
\end{split}
\end{equation}
$ H_{\mu \rt ( \nu \rt \lambda)}$ is calculated as follows. 
\begin{equation}\label{d}
\begin{split}
 H_{\mu \rt ( \nu \rt \lambda)} &= H_\mu \circ H_{T(\nu \rt \lambda)} + H_{\nu \rt \lambda} - H_{T(\nu \rt \lambda)} \\
      &= H_\mu \circ H_{T(\nu \rt \lambda)} +  H_\nu \circ H_{T\lambda} + H_{\lambda} - H_{T\lambda} - H_{T(\nu \rt \lambda)}. 
\end{split}
\end{equation}
Then the associativity of the convolution implies that $ H_\mu \circ H_{T\nu} \circ H_{T\lambda} - H_\mu \circ H_{T(\nu \rt \lambda)}$ only depends on $\nu $ and $\lambda$. If $H_{T\nu} \circ H_{T\lambda}$ were not equal to $H_{T(\nu \rt \lambda)}$
for some $\nu$ and $\lambda$, then there would exist $w \in \comp \backslash \real$ 
such that $ H_{T\nu} \circ H_{T\lambda}(w) \neq H_{T(\nu \rt \lambda)} (w)$. 
In this case, $H_\mu \circ H_{T\nu} \circ H_{T\lambda}(w) - H_\mu \circ H_{T(\nu \rt \lambda)}(w)$ clearly depends on $\mu$, which is a contradiction. Therefore, we conclude that
\begin{equation}\label{e}
H_{T\nu} \circ H_{T\lambda} = H_{T(\nu \rt \lambda)}
\end{equation}
for all $\nu, \lambda \in \pa$.  
Conversely, if (\ref{e}) holds, it is not difficult to see that the convolution is associative. 
\end{proof}

We show the additivity of mean and variance; this will be used in the proof of Theorem \ref{gene}. 
\begin{proposition}\label{addi}
Let $T$: $\pa \to \pa$ be an arbitrary map. Then we have the following properties. 
\begin{itemize}
\item[$(1)$] $\pb$ is closed under the convolution $\rt$.  
\item[$(2)$] $m(\mu)$ and $\sigma^2(\mu)$ are additive with respect to the convolution $\rt$ considered in $\pb$: 
\begin{gather}
m(\mu \rt \nu) = m(\mu) + m(\nu), \\ 
\sigma^2 (\mu \rt \nu) = \sigma^2(\mu) + \sigma^2(\nu). 
\end{gather}
\end{itemize}
 \end{proposition}
\begin{proof}
We use the notation 
\begin{gather}
H_\mu (z) = -m(\mu) + z + \int \frac{1}{x-z}d\rho_\mu (x).
\end{gather}
Then we have
\begin{equation}\label{eq1}
\begin{split}
H_{\mu \rt \nu} (z) &= H_{\mu} \circ H_{T\nu} (z) + H_\nu (z) - H_{T\nu}(z) \\
                             &= -m(\mu) + H_{T\nu} (z) + \int_{\real} \frac{1}{x-H_{T\nu} (z)} d\rho_\mu(x) 
                                    +H_\nu (z) - H_{T \nu}(z) \\
                             &= -m(\mu) + H_\nu(z) + \int_{\real} \frac{1}{x-z} d\Big{(}\int_{\real}(T\nu)_y(x)d\rho_\mu(y) \Big{)} \\       
                                &= -m(\mu) - m(\nu) + z + \int_{\real} \frac{1}{x-z} d\Big{(}\int_{\real} (T\nu)_y(x)d\rho_\mu(y) + \rho_\nu(x) \Big{)}, \\ 
\end{split}
\end{equation}
where $\lambda_y \in \pa$ is defined by $H_{\lambda_y} = H_\lambda - y$ for $\lambda \in \pa$. 
With Lemma \ref{lem2211}, $\pb$ is closed under the convolution; moreover,   
mean and variance are additive.  
\end{proof}

Thus, $\pb$ is closed under the convolution $\rt$ for arbitrary map $T$. 
On the contrary, $\p+$ and $\ps$ are not closed in general under the convolution 
$\rt$. We show the necessary and sufficient conditions for $\p+$ and $\ps$.
\begin{proposition}\label{positive2} 
$(1)$ The following two conditions are equivalent. 
\begin{itemize}
\item[$(1a)$] $T(\p+) \subset \p+$,  
\item[$(1b)$] $\mu \rt \nu \in \p+$ for all $\mu$, $\nu \in \p+$. 
\end{itemize}
$(2)$ The following two conditions are equivalent. 
\begin{itemize}
\item[$(2a)$] $T(\ps) \subset \ps$, 
\item[$(2b)$] $\mu \rt \nu \in \ps$ for all $\mu$, $\nu \in \ps$.
\end{itemize}
\end{proposition}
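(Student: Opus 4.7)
The plan is to combine the formula $H_{\mu \rt \nu} = H_\mu \circ H_{T\nu} + H_\nu - H_{T\nu}$ from Proposition \ref{asso111} with the characterizations on the reciprocal Cauchy transform side: $\mu \in \p+$ iff $H_\mu$ extends analytically to $\comp \setminus [0,\infty)$ and maps this domain into itself (quantitatively, $H_\mu(x) \leq x < 0$ for every $x < 0$, which follows from $0 > \tfrac{1}{x-y} \geq \tfrac{1}{x}$ for all $y \geq 0 > x$ and hence $G_\mu(x) \geq \tfrac{1}{x}$); and $\nu \in \ps$ iff $H_\nu$ is odd.

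For the direction $(1a) \Rightarrow (1b)$: assuming $\mu, \nu \in \p+$ and $T\nu \in \p+$, all three of $H_\mu, H_\nu, H_{T\nu}$ extend to $\comp \setminus [0,\infty)$ and map it into itself, so Schwarz reflection gives the analytic extension of $H_{\mu \rt \nu}$ across $(-\infty, 0)$; for $x < 0$,
\begin{equation*}
H_{\mu \rt \nu}(x) - x = \bigl( H_\mu(H_{T\nu}(x)) - H_{T\nu}(x) \bigr) + \bigl( H_\nu(x) - x \bigr) \leq 0,
\end{equation*}
since $H_{T\nu}(x) < 0$ and each parenthesis is nonpositive by the quantitative bound. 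Hence $\mu \rt \nu \in \p+$. The converse $(1b) \Rightarrow (1a)$ I prove by contrapositive: if $T\nu_0 \notin \p+$ for some $\nu_0 \in \p+$, pick $x_0 \in \supp T\nu_0 \cap (-\infty, 0)$. If $T\nu_0$ has absolutely continuous mass on a neighborhood of $x_0$, then $\im H_{T\nu_0}(x + i0) > 0$ on a set of positive Lebesgue measure there, and for any non-point-mass $\mu \in \p+$ the Nevanlinna form gives $\im H_\mu(w) > \im w$ on $\com+$, so $\im H_{\mu \rt \nu_0}(x+i0) > 0$ on the same set, producing absolutely continuous mass of $\mu \rt \nu_0$ in $(-\infty, 0)$. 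If instead $T\nu_0$ has a pure-point part with atom at $x_0$, then $H_{T\nu_0}$ has a simple zero at $x_0$ and attains all sufficiently small positive real values on the interval $(x_0, x_0 + \varepsilon) \subset (-\infty,0)$ for small $\varepsilon > 0$; choosing $\mu = \tfrac{1}{2}(\delta_0 + \delta_a) \in \p+$ with $a > 0$ small, the pole $w = a/2$ of $H_\mu(w) = \tfrac{2w(w-a)}{2w-a}$ is attained by $H_{T\nu_0}$ at some $z_1 \in (x_0, x_0 + \varepsilon)$, forcing a pole of $H_{\mu \rt \nu_0}$ at $z_1 \in (-\infty, 0)$ and hence $\mu \rt \nu_0 \notin \p+$.

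For part $(2)$, $(2a) \Rightarrow (2b)$ is immediate by substituting $-z$ into the formula and using the oddness of each transform. For $(2b) \Rightarrow (2a)$, I specialize to the specific symmetric measure $\mu = \tfrac{1}{2}(\delta_{-1} + \delta_1) \in \ps$, for which $H_\mu(w) = w - 1/w$. Given $\nu \in \ps$, the imposed oddness of $H_{\mu \rt \nu}$ combined with the oddness of $H_\nu$ reduces the formula, after cancellation of the $H_\nu$ terms, to
\begin{equation*}
\bigl(H_\mu(w_+) - w_+\bigr) + \bigl(H_\mu(w_-) - w_-\bigr) = 0, \qquad w_\pm := H_{T\nu}(\pm z),
\end{equation*}
and, since $H_\mu(w) - w = -1/w$ for our chosen $\mu$, this becomes $1/w_+ + 1/w_- = 0$, i.e.\ $H_{T\nu}(-z) = -H_{T\nu}(z)$, so $T\nu \in \ps$. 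The principal obstacle is the converse of part $(1)$: the construction of a single witness $\mu$ must detect every possible type of mass of $T\nu_0$ on $(-\infty, 0)$; the pure-point and absolutely continuous cases are handled by the two arguments above, and a Lebesgue-decomposition argument reduces the singular continuous case to them, but getting a uniform statement requires some care with the boundary values of $H_{T\nu_0}$.
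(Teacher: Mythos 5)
Your forward direction of part (1) and both directions of part (2) are sound. In particular, your choice $\mu=\tfrac12(\delta_{-1}+\delta_1)$, for which $H_\mu(w)-w=-1/w$, makes the converse of (2) cleaner than the paper's computation, which uses the arcsine law and has to untangle an identity between square roots; and your quantitative bound $H_\mu(x)\le x<0$ for $x<0$, $\mu\in\p+$, is a valid substitute for the paper's monotonicity argument for $(1a)\Rightarrow(1b)$.

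The genuine gap is in $(1b)\Rightarrow(1a)$. Your witness construction is a case analysis on the nature of the mass of $T\nu_0$ near a point $x_0\in\supp T\nu_0\cap(-\infty,0)$, and it only covers (i) an absolutely continuous part with positive density and (ii) an atom that is \emph{isolated} in $\supp T\nu_0$ (the interval $(x_0,x_0+\varepsilon)$ free of support is essential for $H_{T\nu_0}$ to be real, increasing and to attain the value $a/2$ there). A purely singular continuous mass on $(-\infty,0)$, or a non-isolated atom, is detected by neither mechanism: for a singular continuous part one has $\im G_{T\nu_0}(x+i0)=0$ for Lebesgue-a.e.\ $x$ and no poles of $G_{T\nu_0}$, so the claim that a Lebesgue-decomposition argument "reduces the singular continuous case" to the other two is not substantiated. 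The paper avoids the case analysis entirely: writing $H_\mu(z)=-m(\mu)+z+\int\frac{1}{x-z}d\rho_\mu(x)$ for $\mu\in\pb$, formula (\ref{aa}) becomes $H_{\mu\rt\nu_0}=-m(\mu)+H_{\nu_0}-G_{\rho_\mu\rhd T\nu_0}$, where $\rho_\mu\rhd T\nu_0$ denotes the monotone convolution extended by affinity to finite measures. Choosing $\mu\in\p+\cap\pb$ with $a(\rho_\mu)=0$ and invoking the support estimate of Lemma \ref{estimate}(3) gives $a(\rho_\mu\rhd T\nu_0)\le a(T\nu_0)<0$ \emph{whatever} the type of the negative mass of $T\nu_0$, so $G_{\rho_\mu\rhd T\nu_0}$ cannot be analytic on $\comp\setminus[0,\infty)$, while $H_{\mu\rt\nu_0}$ and $H_{\nu_0}$ are; contradiction. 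To close your argument you need either this support estimate or some other uniform way of detecting mass of $T\nu_0$ on the negative half-line.
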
  
\begin{proof}
(1) We assume $(1a)$. From Lemma \ref{positive1}, $H_{\lambda}$ is analytic in $\comp \backslash [0, \infty)$ 
and $H_\lambda < 0$ in $(-\infty, 0)$ for $\lambda = \mu, \nu, T\nu$. 
Then the composition $H_\mu \circ H_{T\nu}$ is analytic in $\comp \backslash [0, \infty)$, and hence, 
$H_{\mu \rt \nu}$ is also analytic in the same region. This implies the condition $(1)$ in Lemma \ref{positive1}. 
We use the notation 
\begin{equation}
H_\mu (z) = b_\mu + z + \int_{\real} \frac{1 + xz}{x-z} d\eta_\mu(x) 
\end{equation}
and similarly for $H_\nu$. 
We put $g_\mu (z) := \int_{\real} \frac{1 + xz}{x-z} d\eta_\mu(x)$.  Proposition \ref{associativity} implies that 
\begin{equation}
\begin{split}
H_{\mu \rt \nu}(z) &= H_\mu \circ H_{T\nu}(z) + H_{\nu}(z) - H_{T\nu}(z) \\
                &= b_\mu + g_\mu (H_{T\nu}(z)) + H_\nu (z).  
\end{split}
\end{equation}
Since $g_\mu$ and $H_{T\nu}$ are non-decreasing, $g_\mu \circ H_{T\nu}$ is also non-decreasing. 
Then $b_\mu + g_\mu \circ H_{T\nu} (-0) \leq b_\mu + g_\mu (-0) = H_{\mu}(-0) \leq 0$. 
Therefore, we obtain $H_{\mu \rt \nu}(-0) \leq 0$.   

Next we assume $(1b)$. We shall prove the fact by \textit{reductio ad absurdum}; we assume that there exists $\nu \in \p+$ such that $T\nu \notin \p+$. 
In the notation (\ref{eq1}), we have 
\begin{equation}\label{eq2}
\begin{split}
H_{\mu \rt \nu}(z) &= H_\mu \circ H_{T\nu}(z) + H_{\nu}(z) - H_{T\nu}(z) \\
                   &=  -m(\mu) + H_\nu(z) + \int_{\real} \frac{1}{x-z} d\Big{(}\int_{\real}(T\nu)_y(x)d\rho_\mu(y) \Big{)} \\ 
                   &= -m(\mu) + H_\nu(z) - G_{\rho_\mu \rhd T\nu}(z) \\
\end{split}
\end{equation}
for all $\mu \in \pb$, where we defined $\rho_\mu \rhd T\nu$ using
the affinity of the left component of the monotone convolution.    
We can construct $\mu \in \p+$ such that  
$a(\rho_\mu) = 0$. Therefore,  we have 
$a(\rho_\mu \rt T\nu) \leq a(T\nu) < 0$ from Lemma \ref{estimate} (3); this inequality means that $G_{\rho_\mu \rhd T\nu}$ is not analytic in $\comp \backslash [0, \infty)$. 
On the contrary, both $H_{\mu \rt \nu}(z)$ and  $H_\nu(z)$ are 
analytic in $\comp \backslash [0, \infty)$ by assumption; this is a contradiction (we note that Lemma \ref{estimate} is applicable to all positive finite measures).  

(2) We assume $(2a)$. Since a probability measure $\mu$ is symmetric if and only if $H_\mu (-z) = - H_\mu(z)$ for $z \in \comp \backslash \real$, the proof is not difficult. 

Conversely, we assume $(2b)$. We take $\mu$ to be the arcsine law with mean 0 and variance $1$. Clearly, $\mu \in \ps$. (2b) implies that 
$H_{\mu \rt \nu}(-z)= -H_{\mu \rt \nu}(z)$ for all $\nu \in \ps$. Using (\ref{aa}) we have  
\begin{equation}
\sqrt{H_{T\nu}(z)^2 - 2} - \sqrt{H_{T\nu}(-z)^2 -2} = H_{T\nu}(z) + H_{T\nu}(-z) 
\end{equation}
for $z \in \com+$. 
After some calculations we obtain $H_{T\nu}(-z) = - H_{T\nu}(z)$, which means $T\nu \in \ps$.   
\end{proof}
\begin{remark} 
The above property unifies the properties of Boolean and monotone convolutions \cite{Has2}. 
In the cases of Boolean and monotone convolutions, we can moreover prove that $\nu ^{\Box n} \in \p+$ implies $\nu \in \p+$, where $\Box$ is the Boolean 
or monotone convolution. This property was used to characterize the subordinators in terms of the L\'{e}vy-Khintchine representations \cite{Has2}. 
\end{remark}

Sometimes the limit distribution of Poisson's law of small numbers concerning 
a deformed convolution does not belong to $\p+$ \cite{KW}. We can prove a sufficient condition for this problem.  
We also show a condition for the central limit measure  to be contained in $\ps$.  

\begin{corollary}
$(1)$ We assume that $T(\p+) \subset \p+$. If Poisson's law of small numbers holds,  then
the limit distribution belongs to $\p+$. \\
$(2)$ We assume that $T(\ps) \subset \ps$. If the central limit theorem holds, then 
the limit distribution belongs to $\ps$. 
\end{corollary}
\begin{remark}
Poisson's law of small numbers and the central limit theorem mean the statements 
as in Theorem \ref{limit12}. 
\end{remark}
\begin{proof}
If we take $\mu^{(N)}:= (1 - \frac{\lambda}{N})\delta_0 + \frac{\lambda}{N}\delta_1 \in \p+$, then the limit distribution $\lim_{N \to \infty} (\mu^{(N)})^{\rt N} \in \p+$ by using Lemma \ref{closed1}. For the central limit theorem, we take 
$\mu:= \frac{1}{2}(\delta_{-1} + \delta_{1})$. Then $(D_{\frac{1}{\sqrt{N}}}\mu)^{\rt N} \in \ps$ and the limit distribution also belongs to $\ps$ again from Lemma \ref{closed1}.  
\end{proof}

\section{Transformations $V_{t,u,a}$} \label{Sec4}
We introduce a family of transformations denoted by $V_{t,u,a}$ and prove that the transformations satisfy the condition (\ref{asso}). Moreover, this family unifies 
the generalized $t$-transformation \cite{KY} and the $V_a$-transformation \cite{KW}.  

Let $f$: $\qa \to \real$, where $\qa$ is a subset of $\pa$.  
Typically $\qa$ is chosen to be $\pb$, $\pmo$ or $\pc$. 
Motivated by the generalized $t$-transformation in \cite{KY} and $V_a$-transformation in \cite{KW}, we look for a transform $V_{t,f}$ of the form 
\begin{equation}
\mu \mapsto V_{t,f}\mu, ~~ H_{V_{t,f} \mu} (z) = tH_\mu(z) + (1-t)z +  f(\mu).  
\end{equation} 
If $f(\mu) = (t-u)m(\mu)$, this is the same as the generalized $t$-transformation. 
If $t=1$ and $f(\mu) = a \sigma^2(\mu)$, this is the same as $V_a$-transformation. 

\begin{lemma}\label{asso2}
Assume that $\qa$ is closed under the convolution $\rhd_{\scriptscriptstyle V_{t,f}}$.
$V_{t,f}$ satisfies the associativity condition (\ref{asso}) considered in $\qa$ if and only if 
$f(\mu \rhd _{\scriptscriptstyle V_{t,f}} \nu) = f(\mu) + f(\nu)$ for all $\mu,\nu \in \qa$.
\end{lemma}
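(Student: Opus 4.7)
The plan is to translate the associativity condition (\ref{asso}) into an identity between reciprocal Cauchy transforms and then reduce it, by direct computation, to the claimed additivity of $f$. Specifically, I would evaluate both sides of
\[
H_{V_{t,f}(\mu \rhd_{V_{t,f}} \nu)}(z) = H_{V_{t,f}\mu \rhd V_{t,f}\nu}(z)
\]
using the definition $H_{V_{t,f}\lambda}(z)=tH_\lambda(z)+(1-t)z+f(\lambda)$, the characterization $H_{\mu\rhd_T\nu}=H_\mu\circ H_{T\nu}+H_\nu-H_{T\nu}$ from Proposition \ref{asso111}, and the identity $H_{\sigma\rhd\tau}=H_\sigma\circ H_\tau$ for the monotone convolution. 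This is a routine but bookkeeping-heavy substitution, and the goal is to show that the difference of the two sides collapses to a single constant, namely $f(\mu\rhd_{V_{t,f}}\nu)-f(\mu)-f(\nu)$.

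Concretely, I would first expand
\[
H_{V_{t,f}(\mu\rhd_{V_{t,f}}\nu)}(z)=t\bigl[H_\mu\!\circ\! H_{V_{t,f}\nu}(z)+H_\nu(z)-H_{V_{t,f}\nu}(z)\bigr]+(1-t)z+f(\mu\rhd_{V_{t,f}}\nu),
\]
and then expand
\[
H_{V_{t,f}\mu\rhd V_{t,f}\nu}(z)=tH_\mu\!\circ\! H_{V_{t,f}\nu}(z)+(1-t)H_{V_{t,f}\nu}(z)+f(\mu),
\]
further substituting $H_{V_{t,f}\nu}(z)=tH_\nu(z)+(1-t)z+f(\nu)$ on the right. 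The term $tH_\mu\!\circ\! H_{V_{t,f}\nu}(z)$ cancels between the two sides. The remaining $H_\nu(z)$ terms combine with coefficient $t-t(1-t)-t\cdot t = 0$, and the polynomial-in-$z$ terms combine with coefficient $(1-t)-t(1-t)-(1-t)^2 = 0$; similarly the $f(\nu)$ terms combine to $-tf(\nu)-(1-t)f(\nu)=-f(\nu)$. What survives is precisely $f(\mu\rhd_{V_{t,f}}\nu)-f(\mu)-f(\nu)$, a constant independent of $z$.

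Once this identity is established, the equivalence is immediate: since the constant $f(\mu\rhd_{V_{t,f}}\nu)-f(\mu)-f(\nu)$ must vanish for all $z$ in order for (\ref{asso}) to hold, the associativity condition is equivalent to $f(\mu\rhd_{V_{t,f}}\nu)=f(\mu)+f(\nu)$. The closedness hypothesis on $\qa$ is only needed to ensure $\mu\rhd_{V_{t,f}}\nu\in\qa$ so that $f(\mu\rhd_{V_{t,f}}\nu)$ is defined. There is no real obstacle here; the only care required is to keep track of the linear-in-$H_\nu$ and linear-in-$z$ coefficients so that one correctly sees them vanish, and to observe that since the residual difference is a constant in $z$, the equivalence requires no injectivity argument for $H_\mu$ beyond evaluation at a single point.
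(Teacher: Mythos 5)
Your computation is correct and is essentially the paper's own proof: the paper likewise expands $H_{V_{t,f}(\mu\rhd_{t,f}\nu)}$ via Proposition \ref{asso111} and $H_{V_{t,f}\mu}\circ H_{V_{t,f}\nu}$ via the definition of $V_{t,f}$, and observes that the two sides differ by the constant $f(\mu\rhd_{t,f}\nu)-f(\mu)-f(\nu)$. The only cosmetic difference is that the paper regroups $tH_\nu(z)+(1-t)z+f(\nu)$ back into $H_{V_{t,f}\nu}(z)$ instead of expanding everything in terms of $H_\nu$ and $z$ as you do; your coefficient checks all come out correctly.
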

\begin{proof}
Denote $\rhd_{V_{t,f}}$ by $\rhd_{t,f}$ for simplicity. 
Applying (\ref{aa}) we obtain 
\begin{equation}
\begin{split}
H_{V_{t,f} (\mu \rhd_{t,f} \nu)}(z) 
             &= tH_{\mu \rhd_{t,f} \nu}(z) + (1-t)z + f( \mu \rhd_{t,f} \nu) \\
             &= tH_\mu \circ H_{V_{t,f} \nu}(z) + tH_\nu(z) - tH_{V_{t,f} \nu}(z) + (1-t)z  + f(\mu \rhd_{t,f} \nu) \\ 
             &= tH_\mu \circ H_{V_{t,f} \nu}(z) + tH_\nu(z) + (1-t)z + f(\nu) - tH_{V_{t,f} \nu}(z)  + f(\mu \rhd_{t,f} \nu) - f(\nu) \\
             &= tH_\mu \circ H_{V_{t,f} \nu}(z) + (1-t)H_{V_{t,f}\nu}(z) + f(\mu \rhd_{t,f} \nu) - f(\nu). \\
\end{split}
\end{equation}
On the other hand, we have
\begin{equation}
\begin{split}
H_{V_{t,f} \mu \rhd V_{t,f} \nu}(z) &= H_{V_{t,f} \mu} \circ H_{V_{t,f} \nu}(z) \\
                         &= tH_\mu \circ H_{V_{t,f} \nu}(z) + (1-t)H_{V_{t,f} \nu}(z) + f(\mu). 
\end{split}
\end{equation}
Therefore, the associativity condition (\ref{asso}) is equivalent to $f(\mu \rhd _{t,f} \nu) = f(\mu) + f(\nu)$.
\end{proof}

We define transformations $V_{t,u,a}$ by 
letting 
\begin{equation}
f(\mu) = f_{t,u,a}(\mu) = (t-u) m(\mu) + a\sigma^2(\mu).
\end{equation}
More clearly, we define
\begin{equation}\label{new}
H_{V_{t,u,a}\mu}(z) = tH_\mu(z) + (1-t)z +  (t-u) m(\mu) + a\sigma^2(\mu).
\end{equation} 
We expect that higher order moments for $f$ have nontrivial structure, but we do not treat them in this article. 
We use the notation $\rhd_{t,u,a}$ for the convolution defined by $V_{t,u,a}$. 

\begin{theorem}\label{gene}
The convolution $\rhd_{t,u,a}$ defined on $\pb$ is associative.  
\end{theorem}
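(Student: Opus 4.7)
The plan is to reduce the associativity of $\rhd_{t,u,a}$ to the hypothesis of Lemma \ref{asso2}, namely the additivity of $f_{t,u,a}$ along the convolution itself, and then invoke results already proved in the excerpt.

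First I would check that $V_{t,u,a}$ indeed sends $\pb$ into $\pb$ (otherwise Lemma \ref{asso2} does not literally apply with $\qa=\pb$). For $\mu \in \pb$ the canonical representation (\ref{maa}) gives $H_\mu(z) = -m(\mu) + z + \int_{\real}\frac{1}{x-z}d\rho_\mu(x)$ with $\rho_\mu$ a positive finite measure of total mass $\sigma^2(\mu)$. Substituting into (\ref{new}) one obtains
\begin{equation*}
H_{V_{t,u,a}\mu}(z) = z + \bigl(-u\,m(\mu) + a\,\sigma^2(\mu)\bigr) + \int_{\real}\frac{1}{x-z}d(t\rho_\mu)(x),
\end{equation*}
which is again of the form (\ref{maa}) whenever $t \geq 0$, so $V_{t,u,a}\mu \in \pb$. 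Combined with Proposition \ref{addi}(1), this shows that $\pb$ is closed under $\rhd_{t,u,a}$, as required for Lemma \ref{asso2} to apply on $\qa = \pb$.

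Next I would verify the additivity condition. By Proposition \ref{addi}(2) applied to the map $T = V_{t,u,a}$, the mean and variance are additive under $\rhd_{t,u,a}$ on $\pb$, so
\begin{equation*}
f_{t,u,a}(\mu \rhd_{t,u,a} \nu) = (t-u)\bigl(m(\mu)+m(\nu)\bigr) + a\bigl(\sigma^2(\mu)+\sigma^2(\nu)\bigr) = f_{t,u,a}(\mu) + f_{t,u,a}(\nu).
\end{equation*}
Thus the hypothesis of Lemma \ref{asso2} is satisfied with $f = f_{t,u,a}$, and the associativity of $\rhd_{t,u,a}$ on $\pb$ follows immediately.

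The proof is essentially mechanical: all the real work has been done in Proposition \ref{addi} (additivity of mean and variance under any $\rhd_T$) and in Lemma \ref{asso2} (the reduction of associativity to additivity of $f$). The only point requiring a little care is the closure check for $V_{t,u,a}$, which is why one writes out the representation (\ref{maa}) explicitly; I expect no genuine obstacle beyond that bookkeeping.
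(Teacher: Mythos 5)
Your proof is correct and follows the same route as the paper, which likewise deduces Theorem \ref{gene} directly from Lemma \ref{asso2} together with the additivity of mean and variance in Proposition \ref{addi}. The only addition is your explicit check that $V_{t,u,a}$ maps $\pb$ into $\pb$ (for $t\geq 0$), a small piece of bookkeeping the paper leaves implicit.
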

\begin{proof}
This fact follows from Lemma \ref{asso2} and Proposition \ref{addi}. 
\end{proof}

In order to calculate the inverse transformation of $V_{t,u,a}$, we show the following facts. 
\begin{lemma} We have the following equalities. 
\begin{gather}
m(V_{t,u,a}\mu) = um(\mu) - a\sigma^2(\mu), \\
\sigma^2(V_{t,u,a}\mu) = t \sigma^2(\mu). 
\end{gather} 
\end{lemma}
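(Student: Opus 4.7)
The plan is to substitute the Maassen representation (equation (\ref{maa})) directly into the defining identity (\ref{new}) and read off the two invariants by uniqueness; essentially there is nothing to do beyond bookkeeping.

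First I would write, for $\mu \in \pb$,
\[
H_\mu(z) = -m(\mu) + z + \int_{\real}\frac{1}{x-z}\,d\rho_\mu(x),
\]
with $\rho_\mu(\real) = \sigma^2(\mu)$. Inserting this into (\ref{new}), the coefficient of $z$ collapses to $t+(1-t)=1$, the constant term simplifies as
\[
-t\,m(\mu) + (t-u)\,m(\mu) + a\,\sigma^2(\mu) \;=\; -u\,m(\mu) + a\,\sigma^2(\mu),
\]
and the Cauchy-type integral rescales to $\int_{\real}\frac{1}{x-z}\,d(t\rho_\mu)(x)$. So I obtain
\[
H_{V_{t,u,a}\mu}(z) \;=\; z \;+\; \bigl(-u\,m(\mu)+a\,\sigma^2(\mu)\bigr) \;+\; \int_{\real}\frac{1}{x-z}\,d(t\rho_\mu)(x).
\]

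This expression is already of Maassen form, so by the uniqueness clause of (\ref{maa}) I can identify the two parameters of $V_{t,u,a}\mu$ as $-m(V_{t,u,a}\mu) = -u\,m(\mu)+a\,\sigma^2(\mu)$ and $\rho_{V_{t,u,a}\mu} = t\rho_\mu$. The two equalities follow at once, using $\sigma^2(V_{t,u,a}\mu) = \rho_{V_{t,u,a}\mu}(\real) = t\,\rho_\mu(\real) = t\,\sigma^2(\mu)$. The only mildly delicate point — and it is not really an obstacle — is verifying that $V_{t,u,a}\mu$ genuinely lies in $\pb$ so that the uniqueness assertion is applicable; this requires $t\geq 0$ (otherwise $t\rho_\mu$ would fail to be a positive finite measure and $V_{t,u,a}\mu$ would not be a probability measure at all), which is implicit in the standing hypothesis that $V_{t,u,a}$ acts on $\pb$.
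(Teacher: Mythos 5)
Your proof is correct and is essentially identical to the paper's: both substitute the Maassen representation of $H_\mu$ into the defining formula for $V_{t,u,a}$, arrive at $H_{V_{t,u,a}\mu}(z) = -um(\mu) + a\sigma^2(\mu) + z + t\int\frac{1}{x-z}\,d\rho_\mu(x)$, and read off the mean and variance from the uniqueness clause of Lemma 2.1. Your additional remark that $t \geq 0$ is needed for $t\rho_\mu$ to be a positive measure is a reasonable observation consistent with the paper's standing hypotheses.
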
 
\begin{proof}
A direct computation leads to 
\begin{gather}
H_{V_{t,u,a}\mu}(z) = -um(\mu) + a\sigma^2(\mu) + z + t\int \frac{1}{x-z}d\rho_\mu (x), 
\end{gather}
from which  and Lemma \ref{maa} the conclusion follows. 
\end{proof}

\begin{proposition} We have the following equality:
\begin{equation}
V_{t',u',a'}V_{t,u,a} = V_{t't, u'u, u'a +a't} \text{~for~} t \geq 0, ~u , ~a \in \real. 
\end{equation}
In particular, we have  
\begin{equation}
V_{t,u,a,} ^{-1} = V_{t^{-1}, u^{-1}, -\frac{a}{tu}} \text{~for~} t > 0, ~u \neq 0, ~a \in \real. 
\end{equation}
\end{proposition}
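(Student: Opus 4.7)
The plan is to verify the composition formula by a direct substitution using the defining equation (\ref{new}) for $V_{t,u,a}$ together with the preceding lemma, which tells us how $m$ and $\sigma^2$ transform under $V_{t,u,a}$. Concretely, I would apply $V_{t',u',a'}$ to the probability measure $V_{t,u,a}\mu$ (which lies in $\pb$ by the preceding discussion, so this is legitimate) and expand:
\begin{equation*}
H_{V_{t',u',a'}V_{t,u,a}\mu}(z) = t'H_{V_{t,u,a}\mu}(z) + (1-t')z + (t'-u')m(V_{t,u,a}\mu) + a'\sigma^2(V_{t,u,a}\mu).
\end{equation*}
Into this I would substitute $H_{V_{t,u,a}\mu}(z) = tH_\mu(z) + (1-t)z + (t-u)m(\mu) + a\sigma^2(\mu)$, and the identities $m(V_{t,u,a}\mu) = um(\mu) - a\sigma^2(\mu)$ and $\sigma^2(V_{t,u,a}\mu) = t\sigma^2(\mu)$ from the preceding lemma.

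Then I would collect coefficients of the four independent quantities $H_\mu(z)$, $z$, $m(\mu)$, $\sigma^2(\mu)$. The coefficient of $H_\mu(z)$ becomes $t't$; the coefficient of $z$ becomes $t'(1-t) + (1-t') = 1 - t't$; the coefficient of $m(\mu)$ becomes $t'(t-u) + (t'-u')u = t't - u'u$; and the coefficient of $\sigma^2(\mu)$ becomes $t'a - (t'-u')a + a't = u'a + a't$. Matching against the template $TH_\mu(z) + (1-T)z + (T-U)m(\mu) + A\sigma^2(\mu)$ yields $T = t't$, $U = u'u$, $A = u'a + a't$, which is exactly $V_{t't,\,u'u,\,u'a + a't}$, proving the first identity.

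For the inverse formula, I would just solve the system $V_{t',u',a'}V_{t,u,a} = V_{1,1,0}$ (since $V_{1,1,0}$ acts as the identity on $H_\mu$). The equations $t't = 1$, $u'u = 1$, and $u'a + a't = 0$ immediately give $t' = t^{-1}$, $u' = u^{-1}$, and $a' = -u'a/t = -a/(tu)$, under the standing assumptions $t > 0$ and $u \neq 0$ needed to divide. There is no real obstacle here: the whole statement is essentially linear bookkeeping, and the only thing to be careful about is keeping the algebra straight when collecting the $\sigma^2(\mu)$ coefficient, where the sign cancellation $t'a - t'a$ is the easy-to-misread step.
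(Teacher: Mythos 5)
Your computation is correct and is exactly the argument the paper intends: the proposition is stated without proof immediately after the lemma giving $m(V_{t,u,a}\mu)=um(\mu)-a\sigma^2(\mu)$ and $\sigma^2(V_{t,u,a}\mu)=t\sigma^2(\mu)$, which is precisely the ingredient you substitute into the defining formula (\ref{new}) before collecting terms. The algebra checks out (including the $t'a-t'a$ cancellation in the $\sigma^2(\mu)$ coefficient), and the composition formula applied in the other order also gives $ua'+at'=-a/t+a/t=0$, so your candidate is a two-sided inverse.
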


\begin{proposition}\label{symV}
$(1)$ $\p+ \cap \pb$ is closed under the convolution $\rhd_{t,u,a}$ if and only if $u \geq t$ and $a = 0$.  \\
$(2)$ $\ps \cap \pb$ is closed under the convolution $\rhd_{t,u,a}$ if and only if $a = 0$. 
\end{proposition}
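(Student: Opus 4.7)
My plan is to reduce both parts, via Proposition \ref{positive2}, to the question of when $V_{t,u,a}$ preserves $\p+ \cap \pb$ or $\ps \cap \pb$. Since Proposition \ref{addi} already shows $V_{t,u,a}$ maps $\pb$ into itself, closure of $\rhd_{t,u,a}$ on $\p+ \cap \pb$ (resp.\ $\ps \cap \pb$) is equivalent to $V_{t,u,a}(\p+ \cap \pb) \subset \p+$ (resp.\ $V_{t,u,a}(\ps \cap \pb) \subset \ps$).

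Part (2) follows from a one-line symmetry calculation. For $\mu \in \ps \cap \pb$ we have $m(\mu)=0$ and $H_\mu(-z) = -H_\mu(z)$, so (\ref{new}) gives
\[
H_{V_{t,u,a}\mu}(-z) + H_{V_{t,u,a}\mu}(z) = 2a\sigma^2(\mu).
\]
Symmetry of $V_{t,u,a}\mu$ thus amounts to the right-hand side vanishing for every $\mu \in \ps \cap \pb$, which forces $a=0$ because $\sigma^2$ is not identically zero on this class; the converse is trivial.

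For Part (1), I use the Cauchy representation (\ref{maa}) $H_\mu(z) = -m(\mu) + z + \int d\rho_\mu(x)/(x-z)$ to compute
\[
H_{V_{t,u,a}\mu}(z) = \bigl[-um(\mu) + a\sigma^2(\mu)\bigr] + z + \int \frac{d(t\rho_\mu)(x)}{x-z}.
\]
The representing measure of $V_{t,u,a}\mu$ is $t\rho_\mu$, automatically supported in $[0,\infty)$ when $\mu \in \p+$, so by Lemma \ref{positive1} one has $V_{t,u,a}\mu \in \p+$ iff $H_{V_{t,u,a}\mu}(-0) \leq 0$; rewriting via $\int x^{-1} d\rho_\mu = m(\mu) + H_\mu(-0)$ reduces this to the clean inequality
\[
(t-u)m(\mu) + a\sigma^2(\mu) + tH_\mu(-0) \leq 0 \quad \text{for every } \mu \in \p+ \cap \pb.
\]
Sufficiency of $u \geq t$ and $a = 0$ drops out immediately from $m(\mu) \geq 0$, $t \geq 0$, $H_\mu(-0) \leq 0$. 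For necessity I plan to test against the two-atom family $\mu_{\epsilon,c} := (1-\epsilon)\delta_0 + \epsilon\delta_c \in \p+ \cap \pb$, for which $m = \epsilon c$, $\sigma^2 = \epsilon(1-\epsilon)c^2$ and $H_{\mu_{\epsilon,c}}(-0)=0$; the inequality specializes, after dividing by $\epsilon c$, to $(t-u) + a(1-\epsilon)c \leq 0$. Letting $c \to 0^+$ (after $\epsilon \to 0^+$) forces $u \geq t$, while $c \to \infty$ forces $a$ to be nonpositive.

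The main obstacle is pinning down the exact threshold for $a$: the two-atom family alone yields only $a \leq 0$, and isolating the stronger condition $a = 0$ will require a more refined test—one where $\sigma^2(\mu)$ remains bounded away from zero while the combination $(t-u)m(\mu) + tH_\mu(-0)$ is driven to zero from below, so that the $a\sigma^2(\mu)$ term becomes decisive—or else a separate argument that exploits the strict positivity conditions in Lemma \ref{positive1} more finely.
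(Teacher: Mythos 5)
Your reduction of both parts, via Proposition \ref{positive2} and Proposition \ref{addi}, to the question of whether $V_{t,u,a}$ maps $\p+ \cap \pb$ into $\p+$ (resp.\ $\ps \cap \pb$ into $\ps$) is exactly what the paper does (its proof is the single sentence that the claims are easy consequences of Proposition \ref{positive2}). Your part (2) is complete and correct, and in part (1) the representation $H_{V_{t,u,a}\mu}(z) = -um(\mu)+a\sigma^2(\mu)+z+\int (x-z)^{-1}\,d(t\rho_\mu)(x)$, the resulting criterion $(t-u)m(\mu)+a\sigma^2(\mu)+tH_\mu(-0)\le 0$ for all $\mu \in \p+ \cap \pb$, the sufficiency of $u\ge t$, $a=0$, and the necessity of $u\ge t$ and $a\le 0$ are all correct.

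The gap you flag at the end, however, cannot be closed, and the obstruction is not a deficiency of your test family. When $u\ge t\ge 0$ and $a\le 0$, each of the three terms in your criterion is nonpositive for \emph{every} $\mu\in\p+\cap\pb$ (since $m(\mu)\ge 0$, $\sigma^2(\mu)\ge 0$ and $H_\mu(-0)\le 0$), and the representing measure $t\rho_\mu$ of $V_{t,u,a}\mu$ remains supported in $[0,\infty)$; Lemma \ref{positive1} then gives $V_{t,u,a}(\p+\cap\pb)\subset\p+$, and Proposition \ref{positive2} yields closure of $\rhd_{t,u,a}$ on $\p+\cap\pb$ for all such $a\le 0$. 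Concretely, $V_{1,1,-1}\mu = \mu\uplus\delta_{\sigma^2(\mu)}$ is a Boolean shift by a nonnegative amount and so preserves $\p+$, even though $a=-1\neq 0$. Thus the condition that actually follows from the paper's own Lemma \ref{positive1} and Proposition \ref{positive2} is ``$u\ge t$ and $a\le 0$''; the necessity of $a=0$ in part (1) is not a consequence of Proposition \ref{positive2} and appears to be a misstatement (contrast part (2), where $m(V_{t,u,a}\mu)=-a\sigma^2(\mu)$ must vanish for a symmetric image, so $a=0$ genuinely is forced). You should either prove the proposition with $a\le 0$ in part (1) --- which your argument already essentially does --- or exhibit a constraint on positivity of supports that is not captured by Proposition \ref{positive2}; no refinement of the family $\mu_{\epsilon,c}$ will produce the missing inequality $a\ge 0$.
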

\begin{proof}
These facts are easy consequences of Proposition \ref{positive2}. 
\end{proof}
\begin{example} \normalfont 
\begin{itemize}
\item[(1)] Oravecz introduced the Fermi convolution $\bullet$ in \cite{O1}.  
He mentioned a relation between the Fermi convolution 
and the c-free convolution: 
\begin{equation}
(\mu \bullet \nu, \delta_{m(\mu)} \boxplus \delta_{m(\nu)}) = (\mu, \delta_{m(\mu)}) \boxplus (\nu, \delta_{m(\nu)}),
\end{equation}  
where $m(\mu)$ denotes the mean of $\mu$. We can easily extend the Fermi convolution to the convolution coming from the map $F_u$ defined by $F_u \mu = \delta_{um(\mu)}$. Clearly $V_{0,u,0} = F_u$. An associative convolution $\rhd_{F_u}$ 
arises from $F_u$:  
\begin{equation}\label{Fe}
(\mu \rhd_{F_u} \nu, \delta_{um(\mu) + um(\nu)}) = (\mu, \delta_{um(\mu)}) \rhd (\nu, \delta_{um(\nu)}). 
\end{equation} 
\item[(2)] The $t$-transformation is realized as $\mathcal{U}_t = V_{t, t, 0}$. An associative convolution $\rhd_t$ arises from the relation 
\begin{equation}\label{t-tra}
(\mu \rhd_{t} \nu, \mathcal{U}_t (\mu) \rhd \mathcal{U}_t(\nu)) = (\mu, \mathcal{U}_t (\mu)) \rhd (\nu, \mathcal{U}_t (\nu)). 
\end{equation} 
We note that the $t$-transformation interpolates the Boolean and monotone convolutions: they appear when $t=0$ and $t = 1$, respectively.    
\item[(3)] The $V_a$-transformation is equal to $V_{1,1,a}$. 
\end{itemize}
\end{example}

In the following we make the meaning of the results in this section clearer.  It is known that the $t$-transformation $\mathcal{U}_t$ ($t > 0$) satisfies the condition (\ref{Bo}), so that a new convolution $\boxplus_{\mathcal{U}_t}$ \cite{BW1} can be defined. This convolution can also be written as 
\begin{equation}\label{defo}
 \mu \boxplus_{\mathcal{U}_t} \nu = \mathcal{U}_{1 \slash t}(\mathcal{U}_t(\mu) \boxplus  \mathcal{U}_t(\nu)) 
\end{equation}
for $t > 0$. 
Apart from the context of the c-free convolution, it seems interesting to study 
the deformation of Boolean and tensor convolutions defined by the right hand side of (\ref{defo}), 
with $\boxplus$ replaced by $\uplus$ and $\ast$, respectively. 
The new convolutions were studied in \cite{BW2}.  
By definition, the deformed convolutions are associative and commutative. 

We can also define the same deformations in the monotone case.  
The results in this section show that the deformation has a natural meaning in terms of the c-monotone convolution  
as in the case of the free convolution (cf. (\ref{Bok}) and (\ref{Ha})).  
The above discussion is meaningful for any $T$ which is invertible such as some class of the $\mathbf{t}$-transformation.

\section{Deformations related to monotone infinitely divisible distributions}\label{Sec5}
Krystek and Wojakowski have introduced a deformation connected to a $\boxplus$-infinitely divisible distribution in \cite{KW}, which we explain now.  
For a $\boxplus$-infinitely divisible distribution $\varphi$ with a compact support, there corresponds 
a unique weakly continuous $\boxplus$-convolution semigroup $\{ \varphi_t \}_{t \geq 0}$ with $\varphi_0 = \delta_0$ and $\varphi_1 = \varphi$. 
Define a transformation $\Phi_t ^{\varphi}$ by 
\begin{equation}
\Phi_t^\varphi \mu = \varphi_{\sigma^2(\mu)t}.
\end{equation}
This map satisfies the condition (\ref{Bo}). 

We introduce the monotone analog of $\Phi_{t} ^{\varphi}$. 
\begin{definition}
For a $\rhd$-infinitely divisible distribution $\xi \in \pb$, let $\{ \xi_t \}_{t \geq 0}$ be the corresponding weakly continuous 
$\rhd$-convolution semigroup with $\xi_0 = \delta_0$ and $\xi_1 = \xi$.
Let $f$: $\pb \to \real$. We define a transformation $\Xi_{f} ^{\xi}$ by setting
\begin{equation}
\Xi_{f} ^{\xi}\mu := \xi_{f(\mu)}.  
\end{equation}
\end{definition}

\begin{lemma}
$\Xi_{f} ^{\xi}$ satisfies the associativity condition (\ref{asso}) in $\pb$ if and only if $f(\mu \rhd_{\Xi_f ^{\xi}} \nu) = f(\mu) + f(\nu)$ for all $\mu$, 
$\nu \in \pb$. 
\end{lemma}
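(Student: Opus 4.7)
The plan is to directly unfold the condition (\ref{asso}) for $T = \Xi_{f}^{\xi}$ and exploit the semigroup structure of $\{\xi_t\}_{t\geq 0}$. Writing $T\mu = \xi_{f(\mu)}$, the condition $T(\mu \rhd_{\Xi_{f}^{\xi}} \nu) = T\mu \rhd T\nu$ becomes
\begin{equation*}
\xi_{f(\mu \rhd_{\Xi_{f}^{\xi}} \nu)} = \xi_{f(\mu)} \rhd \xi_{f(\nu)}.
\end{equation*}
By the defining semigroup property $\xi_s \rhd \xi_t = \xi_{s+t}$, the right-hand side simplifies to $\xi_{f(\mu)+f(\nu)}$, so (\ref{asso}) is equivalent to the identity $\xi_{f(\mu \rhd_{\Xi_{f}^{\xi}} \nu)} = \xi_{f(\mu)+f(\nu)}$ for all $\mu,\nu \in \pb$.

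The ``if'' direction is then immediate: equality of parameters forces equality of measures, so (\ref{asso}) holds as soon as $f$ is additive along $\rhd_{\Xi_{f}^{\xi}}$. For the ``only if'' direction I would invoke injectivity of the index map $t \mapsto \xi_t$. Since $\xi \in \pb$, applying the additivity of mean and variance (Proposition \ref{addi}, which holds for the monotone convolution) along the semigroup gives $m(\xi_t) = t \cdot m(\xi)$ and $\sigma^2(\xi_t) = t \cdot \sigma^2(\xi)$. Hence whenever $\xi \neq \delta_0$ (equivalently $m(\xi) \neq 0$ or $\sigma^2(\xi) > 0$), the correspondence $t \mapsto \xi_t$ is strictly monotone and in particular injective, so $\xi_{f(\mu \rhd_{\Xi_{f}^{\xi}} \nu)} = \xi_{f(\mu)+f(\nu)}$ forces $f(\mu \rhd_{\Xi_{f}^{\xi}} \nu) = f(\mu)+f(\nu)$.

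The only conceptual obstacle is the degenerate case $\xi = \delta_0$: there $T\mu \equiv \delta_0$, the convolution $\rhd_{\Xi_{f}^{\xi}}$ collapses to the Boolean convolution $\uplus$ (which is automatically associative), while $f$ need not be additive. This case should either be excluded by hypothesis or understood as a trivial exception; outside it, the proof is essentially a two-line calculation combining unfolding of (\ref{asso}) with the injectivity of $t \mapsto \xi_t$.
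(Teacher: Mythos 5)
Your proof is correct and takes essentially the same route as the paper, whose entire argument is the single observation that $\Xi_{f}^{\xi}\mu \rhd \Xi_{f}^{\xi}\nu = \xi_{f(\mu)+f(\nu)}$ by the semigroup property. Your extra care about the injectivity of $t \mapsto \xi_t$ (via the linearity of the mean and variance in $t$) and your identification of the degenerate case $\xi = \delta_0$, where the ``only if'' direction genuinely fails, make explicit a caveat the paper leaves unstated.
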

\begin{proof}
This fact follows from the equality $\Xi_{f} ^{\xi}\mu \rhd \Xi_{f} ^{\xi}\nu = \xi_{f(\mu)+f(\nu)}$. 
\end{proof}
\begin{theorem}
The map $\Xi_{t} ^{\xi}$ $(t \geq 0)$ defined by $f(\mu)=  t\sigma^2(\mu)$ satisfies the condition (\ref{asso}).  
\end{theorem}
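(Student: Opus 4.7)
The plan is to reduce the statement directly to the lemma just proved, combined with Proposition \ref{addi}(2). By that lemma, the task is to verify the functional equation
\[
f(\mu \rhd_{\Xi_t^\xi} \nu) = f(\mu) + f(\nu)
\]
for $f(\mu) = t\sigma^2(\mu)$ and all $\mu,\nu \in \pb$. Dividing by $t$ (the case $t=0$ is trivial, as $\Xi_0^\xi\mu = \delta_0$ for all $\mu$), this becomes exactly the additivity of the variance under the convolution $\rhd_{\Xi_t^\xi}$.

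First I would briefly check that $\Xi_t^\xi$ is really a map $\pb \to \pb$, so that iterating the convolution makes sense. Since $\xi \in \pb$, the weakly continuous $\rhd$-convolution semigroup $\{\xi_s\}_{s\geq 0}$ satisfies $r_n^M(\xi_s) = s\, r_n^M(\xi)$ for $n=1,2$ (monotone cumulants are additive on identical copies under $\rhd$), hence $m_2(\xi_s) = s\, r_2^M(\xi) + s^2 r_1^M(\xi)^2 < \infty$. Therefore $\xi_s \in \pb$ for every $s \geq 0$, and $\Xi_t^\xi(\mu) = \xi_{t\sigma^2(\mu)}$ lies in $\pb$.

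Now the crucial additivity of variance is \emph{exactly} Proposition \ref{addi}(2), which holds for the convolution $\rhd_T$ associated to an \textbf{arbitrary} map $T$. Applying it with $T = \Xi_t^\xi$ yields
\[
\sigma^2(\mu \rhd_{\Xi_t^\xi} \nu) = \sigma^2(\mu) + \sigma^2(\nu),
\]
so $f(\mu \rhd_{\Xi_t^\xi} \nu) = f(\mu) + f(\nu)$ as required. Invoking the preceding lemma, $\Xi_t^\xi$ satisfies (\ref{asso}).

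There is essentially no obstacle here: the heavy lifting has already been done in Proposition \ref{addi} (universal additivity of variance) and in the previous lemma (reduction of the associativity condition to the scalar functional equation for $f$). The only thing worth being explicit about is the verification that $\xi_s \in \pb$ for all $s$, so that $\Xi_t^\xi$ genuinely lands in $\pb$ and one can iterate; this is a small consistency check rather than a real difficulty. One may also note, as a sanity check, that the two sides of (\ref{asso}) both equal $\xi_{t\sigma^2(\mu) + t\sigma^2(\nu)}$: the left-hand side by the additivity of variance, the right-hand side by the semigroup property $\xi_{t\sigma^2(\mu)} \rhd \xi_{t\sigma^2(\nu)} = \xi_{t\sigma^2(\mu) + t\sigma^2(\nu)}$.
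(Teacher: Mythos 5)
Your proof is correct and follows essentially the same route as the paper: the paper's proof is simply ``The fact follows from Lemma \ref{asso2} and Proposition \ref{addi}'', i.e.\ the reduction to the functional equation $f(\mu \rhd_{\Xi_t^\xi}\nu)=f(\mu)+f(\nu)$ via the preceding lemma and the universal additivity of the variance. Your extra check that $\xi_s\in\pb$ and the semigroup sanity check are harmless additions the paper leaves implicit.
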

\begin{proof}
The fact follows from Proposition \ref{addi}. 
\end{proof}
\begin{remark}
$(1)$ 
If $\xi = \delta_a$, the map $f_{s,t}(\mu) = -s m(\mu) + t \sigma^2(\mu)$ $(s, t \in \real)$ is also possible. In this case we have $\Xi_{f_{u,a}}^{\delta_{1}} =  V_{0,u,a}$.  \\
$(2)$ Higher order moments may be  possible for $f$, which we do not consider in this paper. 
\end{remark} 

\begin{proposition}
$(1)$ $\p+ \cap \pb$ is closed under the convolution $\rhd_{\Xi_{t} ^{\xi}}$ if and only if $\xi \in \p+ \cap \pb$. \\ 
$(2)$ We assume that $\xi \in \pc$. Then $\ps \cap \pc$ is closed under the convolution $\rhd_{\Xi_{t} ^{\xi}}$ if and only if $\xi \in \ps \cap \pc$.
\end{proposition}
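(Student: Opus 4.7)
The plan is to reduce both parts to Proposition \ref{positive2} applied to $T = \Xi_t^\xi$, using the key identity $\Xi_t^\xi \mu = \xi_{t\sigma^2(\mu)}$, which shows that $T$ depends on $\mu$ only through $\sigma^2(\mu)$; as $\mu$ ranges over suitable test measures in $\p+\cap\pb$ (resp.\ $\ps\cap\pc$), $t\sigma^2(\mu)$ sweeps out all of $[0,\infty)$. Throughout I would assume $t>0$; the case $t=0$ collapses to the Boolean convolution (since then $T\mu = \delta_0$ identically), for which both equivalences are vacuous.

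For part (1), Proposition \ref{addi} gives closure of $\pb$ under any $\rt$, and additivity of variance along $\rhd$ gives $\sigma^2(\xi_s) = s\sigma^2(\xi) < \infty$, so $\xi_s \in \pb$ for every $s \geq 0$. By the remark after Proposition \ref{associativity} I can invoke Proposition \ref{positive2}(1) on the subset $\pb$, reducing the claim to the equivalence between ``$\xi_s \in \p+$ for every $s \geq 0$'' and ``$\xi \in \p+$''. The $\Leftarrow$ direction is exactly Lemma \ref{subordinator}; the $\Rightarrow$ direction follows by testing against $\mu_0 = \tfrac{1}{2}\delta_0 + \tfrac{1}{2}\delta_{2/\sqrt{t}} \in \p+\cap\pb$, for which $\sigma^2(\mu_0) = 1/t$ so that $T\mu_0 = \xi_1 = \xi$.

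Part (2) follows the same template, but I first need closure of $\pc$ under $\rt$. I would establish this via Lemma \ref{compact1}(2) applied to the c-monotone convolution of the pairs $(\mu,T\mu)$ and $(\nu,T\nu)$; the hypothesis $T\mu, T\nu \in \pc$ is verified by showing $\xi_s \in \pc$ for every $s \geq 0$, which I would deduce from Theorem \ref{correspondence} applied to the monotone pair $(\xi,\xi)$ (the resulting compactly supported c-monotone semigroup must have second component equal to $\xi_s$ by the vector-field uniqueness in Theorem \ref{thm1}). With closure of $\pc$ in hand, Proposition \ref{positive2}(2) reduces the claim to the equivalence between ``$\xi_s \in \ps$ for every $s \geq 0$'' and ``$\xi \in \ps$''. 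The $\Leftarrow$ direction is Lemma \ref{symm} together with $\xi_0 = \delta_0$; the $\Rightarrow$ direction uses the symmetric Bernoulli $\mu_0 = \tfrac{1}{2}\delta_{-1/\sqrt{t}} + \tfrac{1}{2}\delta_{1/\sqrt{t}} \in \ps\cap\pc$ (with $\sigma^2(\mu_0) = 1/t$), which yields $T\mu_0 = \xi$.

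The main obstacle I expect is the technical verification that the monotone semigroup $\{\xi_s\}_{s \geq 0}$ inherits compact support from $\xi \in \pc$ at every real time $s$, not just at rationals where it follows from iterating Lemma \ref{compact1}(2). Once this is arranged via Theorem \ref{correspondence}, the rest is a clean application of Propositions \ref{addi} and \ref{positive2} together with Lemmas \ref{subordinator} and \ref{symm}.
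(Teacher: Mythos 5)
Your proposal is correct and follows exactly the paper's route: the paper's entire proof is that the statement is "an immediate consequence of Lemma \ref{subordinator}, Lemma \ref{symm} and Proposition \ref{positive2}," which is precisely your reduction. You supply additional details the paper omits (the explicit test measures realizing $T\mu_0=\xi$, and the closure of $\pb$ and $\pc$ under the deformed convolution), but the underlying argument is the same.
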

\begin{proof}
This is an immediate consequence of Lemma \ref{subordinator}, Lemma \ref{symm} and Proposition \ref{positive2}. 
\end{proof}

\section{Cumulants for a general convolution}\label{Cum}
Only in this section, $r_n(\mu)$ denote general cumulants, not only the monotone cumulants. 
 
To define cumulants for deformed convolutions in the section \ref{Cumulants}, 
we consider what are cumulants of a convolution product. We have clarified three axioms of cumulants in \cite{H-S} for random variables; 
however, we need more general axioms to treat convolutions appearing in Sections \ref{Sec4}, \ref{Sec5}. 
Results in this section are quite general and will be applicable to other convolutions which do not appear in this paper. 

Let $\Box$ be a convolution defined on $\pmo$. We shall treat convolutions which are not necessarily commutative for the later applications. All results in this section hold for both $\pmo$ and $\pc$
, except for Theorem \ref{add1}. Then we use the set $\pmo$ mainly. 
\begin{definition}
(1) We define recursively $\nu ^{\Box n} := \nu \Box \nu ^{\Box n-1}$ for $\nu \in \pmo$. 
$\Box$ is said to be power associative if $\nu^{\Box (n + m)} = \nu^{\Box n} \Box \nu^{\Box m}$ for all $m$, $n \geq 0$. 
\end{definition}

Let $m_n(\mu)$ be the $n$-th moment of $\mu \in \pmo$. 
We put the following assumptions.  
\begin{itemize}
\item[(M1)] There exists a universal polynomial $P_n$ of $2n - 2$ variables for each $n \geq 1$ such that 
\begin{equation}\label{mom1}
m_n (\mu \Box \nu) = m_n(\mu) + m_n(\nu) + P_n(m_1(\mu), \cdots, m_{n-1}(\mu), m_1(\nu), \cdots, m_{n-1}(\nu)). 
\end{equation}
\item[(M2)] The polynomial $P_n$ contains no constants for any $n \geq 1$.  
\end{itemize} 
\begin{remark}
The condition (M2) is equal to the condition $\delta_0 \Box \delta_0 = \delta_0$. 
\end{remark}

Let $r_n (\mu)$ be a polynomial of $\{ m_k(\mu) \}_{k \geq 1}$ for any $n \geq 1$. 
We consider the following properties. 
\begin{itemize} 
\item[(C1)] Power additivity: for any $n$, $N \geq 1$,   
\begin{equation}
r_n(\mu ^{\Box N}) = N r_n(\mu).
\end{equation}
\item[(C2)] There exists a universal polynomial $Q_n$ of $n-1$ variables such that 
\begin{equation}\label{cum1}
r_n(\mu) = m_n(\mu) + Q_n(m_1(\mu), \cdots, m_{n-1}(\mu)). 
\end{equation} 
\item[(C2')] In addition to the condition (C2), the polynomial $Q_n$ never contains linear terms $m_k(\mu)$, $1 \leq k \leq n-1$ for any $n$.  
\end{itemize}
$Q_1$ is understood to be a constant which turns out to be $0$ in Proposition \ref{trans2}. 

If a sequence $\{r_n \}$ satisfies (C2), we can write $m_n$ in terms of $r_n$ as 
\begin{equation}\label{mom2}
m_n(\mu) = r_n(\mu) + R_n(r_1(\mu), \cdots, r_{n-1}(\mu)), 
\end{equation}
where $R_n$ is a polynomial of $n-1$ variables. 

We note that in many important examples the condition of homogeneity \\ 
\begin{equation}\label{C2''}
r_n(D_{\lambda } \mu) = \lambda ^n r_n(\mu)  
\end{equation}
holds. Indeed, this condition holds for tensor, free, Boolean and monotone cumulants. Clearly (C2) and (\ref{C2''}) imply (C2').  
 We do not assume this condition since the uniqueness of cumulants follows from only (C1) and (C2') (see Proposition \ref{existence-uniqueness}). Moreover, 
there are examples which satisfy (C2') but do not satisfy (\ref{C2''}) such as cumulants for a convolution deformed by the $V_a$-transformation \cite{KW}.

If there exists a sequence $\{r_n \}$ satisfying (C1) and (C2), 
we consider a transformation of the form 
\begin{equation}\label{trans1}
r_n \mapsto r'_n := r_n + \sum_{k = 1} ^{n-1} a_{n, k} r_k 
\end{equation}
for real numbers $ a_{n, k}$, $1 \leq k \leq n-1$, $2 \leq n < \infty$.  
This transformation clearly preserves the properties (C1) and (C2). 
Moreover, we obtain the following property (1). 
\begin{proposition}\label{trans2}
We assume (M1) and (M2) and assume that $\Box$ is power associative. \\
(1) If there are two sequences $\{r_n \}$ and $\{r'_n \}$ satisfying (C1) and (C2), there exists a unique transformation of the form (\ref{trans1}) which maps $\{r_n \}$ to $\{r'_n \}$. \\
(2) The polynomial $Q_n$ in (C2) never contains a constant term. 
\end{proposition}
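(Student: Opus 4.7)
My strategy is to first prove (2), then bootstrap to (1) by analyzing the polynomial difference of two cumulant sequences and exploiting the scaling rule (C1). For (2), the idea is to evaluate at $\delta_0$: since (M2) says each $P_n$ has no constant term, (M1) gives $m_n(\delta_0 \Box \delta_0) = 0$ for every $n$, and a straightforward induction using $\nu^{\Box N} = \nu \Box \nu^{\Box(N-1)}$ then shows $m_k(\delta_0^{\Box N}) = 0$ for all $k, N \geq 1$. Because $r_n$ is a polynomial in the moments (by (C2)), this forces $r_n(\delta_0^{\Box N}) = Q_n(0,\ldots,0)$ independently of $N$, while (C1) requires the value to equal $N \cdot Q_n(0,\ldots,0)$; taking $N \geq 2$ yields $Q_n(0,\ldots,0) = 0$, completing (2).

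For existence in (1), the plan is to express $r'_n - r_n$ via (C2) as $Q'_n(m_1,\ldots,m_{n-1}) - Q_n(m_1,\ldots,m_{n-1})$, then invert (\ref{mom2}) iteratively (the relation being triangular with unit diagonal) to write each $m_k$ as a polynomial in $r_1,\ldots,r_k$; this produces a polynomial $S_n$ with $r'_n - r_n = S_n(r_1,\ldots,r_{n-1})$, and the remaining task is to show that $S_n$ is a linear form. Applying (C1) to both sequences on $\mu^{\Box N}$ gives
\[
S_n\!\left(r_1(\mu^{\Box N}),\ldots,r_{n-1}(\mu^{\Box N})\right) = N\, S_n\!\left(r_1(\mu),\ldots,r_{n-1}(\mu)\right),
\]
and substituting $r_k(\mu^{\Box N}) = N r_k(\mu)$ turns this into the scaling identity $S_n(N x_1,\ldots,N x_{n-1}) = N S_n(x_1,\ldots,x_{n-1})$ evaluated on the vectors $(x_k) = (r_k(\mu))$ produced by $\mu \in \pmo$.

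The main technical obstacle, as I see it, will be to promote this scaling identity from the image $\{(r_1(\mu),\ldots,r_{n-1}(\mu)) : \mu \in \pmo \}$ to a polynomial identity on all of $\real^{n-1}$. I plan to verify the image has non-empty interior by testing with finitely supported measures $\mu = \sum_{i=1}^{K} \lambda_i \delta_{y_i}$ for $K$ sufficiently large and distinct $y_i$'s: the Jacobian of $(\lambda_i) \mapsto (m_j(\mu))_{j=1}^{n-1}$ is controlled by a submatrix of a Vandermonde matrix and hence of full rank, producing an open subset of moment-space, and the triangular polynomial bijection $(m_k) \leftrightarrow (r_k)$ transports openness to the cumulant side. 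Once the identity is upgraded to a polynomial identity, expanding $S_n = \sum_\alpha c_\alpha x^\alpha$ and comparing monomials forces $(N^{|\alpha|} - N) c_\alpha = 0$ for every $N \geq 2$, so $c_\alpha = 0$ unless $|\alpha| = 1$; hence $S_n = \sum_{k=1}^{n-1} a_{n,k} x_k$. Uniqueness is immediate from the same density statement: any two valid coefficient-sets differ by a linear form vanishing on the cumulant image in $\real^{n-1}$, which must therefore be identically zero.
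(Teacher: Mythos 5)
Your proof is correct, and for part (1) the engine is the same as the paper's: write $r'_n-r_n$ as a polynomial $S_n(r_1,\dots,r_{n-1})$ via (\ref{cum1}) and (\ref{mom2}), replace $\mu$ by $\mu^{\Box N}$, use (C1) to get $S_n(Nx)=NS_n(x)$ on cumulant vectors, and compare powers of $N$ to kill every monomial of degree $\neq 1$. The two places you deviate are both reasonable. First, for (2) you evaluate at $\delta_0$ (using the paper's own remark that (M2) is equivalent to $\delta_0\Box\delta_0=\delta_0$, so $m_k(\delta_0^{\Box N})=0$) and invoke (C1) with $N=2$; the paper instead runs an induction on $n$ with a general $\mu$. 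Your version is shorter and avoids the induction on the structure of $Q_n$. Second, you explicitly verify that the set $\{(r_1(\mu),\dots,r_{n-1}(\mu)):\mu\in\pmo\}$ has nonempty interior (finitely supported measures, Vandermonde Jacobian, then transport through the triangular bijection $(m_k)\leftrightarrow(r_k)$), which is needed to upgrade the scaling identity from the range of cumulant vectors to a polynomial identity and to get uniqueness of the coefficients $a_{n,k}$; the paper passes over this step silently ("this is an equality of polynomials of $N$, and hence..."), so your argument fills a genuine, if standard, gap. One small point of care in the Vandermonde step: because of the constraint $\sum_i\lambda_i=1$ you lose one degree of freedom, so you need $K\ge n$ atoms rather than $K=n-1$; your phrasing "for $K$ sufficiently large" already covers this.
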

\begin{proof}
  (1) There exists a polynomial $A_n$ of variables $n-1$ for each $n \geq 1$ 
such that $r'_n = r_n + A_n(r_1, \cdots, r_{n-1})$ by using (\ref{cum1}) and (\ref{mom2}). Replacing $\mu$ by $\mu^{\Box N}$, we obtain $Nr'_n = Nr_n + A_n(Nr_1, \cdots, Nr_{n-1})$ for any $N$. This is an equality between polynomials of $N$, and hence, $A_n$ is of the form $A_n(r_1, \cdots, r_{n-1}) = \sum_{k=1}^{n-1} a_{n,k}r_k$. \\
(2)  We show the fact inductively. For $n = 1$, there exists $b_1 \in \real$ such that $r_1 = m_1 + b_1$. Since $P_n$ does not contain a constant term in (\ref{mom1}), we have $m_1(\mu \Box \mu) = 2 m_1(\mu)$, which implies $2r_1(\mu) - b_1 = 2 r_1(\mu) - 2b_1$. Therefore, $b_1 = 0$. 
We assume that $Q_n$ does not contain a constant term for $n \leq k$. 
Using a similar argument, we can prove that $Q_{k+1}$ does not contain a constant term.  
\end{proof}

\begin{proposition}\label{existence-uniqueness}
We assume (M1) and (M2) and assume that $\Box$ is power associative. 
 The following statements are equivalent: 
\begin{itemize}
\item[(a)] There exists a sequence $\{r_n \}_{n \geq 1}$ satisfying (C1) and (C2); 
\item[(b)] There exists a sequence $\{r_n \}_{n \geq 1}$ satisfying (C1) and (C2'); 
\item[(c)] $m_n(\mu^{\Box N})$ is a polynomial of $m_1(\mu), \cdots, m_n(\mu)$ and $N$ for any $n$. 
\end{itemize}
Moreover, the sequence $\{r_n \}$ in (b) is unique and is given by 
\begin{equation}\label{cum7}
r_n(\mu) = \frac{\partial }{\partial N} m_n(\mu ^{\Box N}) \Big{|}_{N = 0}. 
\end{equation} 
\end{proposition}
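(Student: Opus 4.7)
The plan is to establish the cycle $(b) \Rightarrow (a) \Rightarrow (c) \Rightarrow (a) \Rightarrow (b)$, then address uniqueness and the explicit formula. The implication $(b) \Rightarrow (a)$ is immediate since (C2') strengthens (C2). For $(a) \Rightarrow (c)$, I would invert (C2) to write $m_n(\xi) = r_n(\xi) + R_n(r_1(\xi), \ldots, r_{n-1}(\xi))$ for some polynomial $R_n$, substitute $\xi = \mu^{\Box N}$, and apply (C1) to obtain
\begin{equation*}
m_n(\mu^{\Box N}) = N r_n(\mu) + R_n(N r_1(\mu), \ldots, N r_{n-1}(\mu)),
\end{equation*}
which by (C2) is manifestly a polynomial in $N$ and the $m_k(\mu)$. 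For $(c) \Rightarrow (a)$, expand $m_n(\mu^{\Box K}) = \sum_k A_{n,k}(\mu) K^k$ as a polynomial in $K$ and set $r_n(\mu) := A_{n,1}(\mu)$; then (C2) is automatic, while (C1) follows from power associativity by differentiating the polynomial identity $m_n(\mu^{\Box NM}) = m_n((\mu^{\Box N})^{\Box M})$ in $M$ at $M=0$, whose two sides yield $N A_{n,1}(\mu) = N r_n(\mu)$ and $r_n(\mu^{\Box N})$ respectively.

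The heart of the argument is $(a) \Rightarrow (b)$, which I would carry out by induction on $n$. Given $r_n = m_n + Q_n$, Proposition \ref{trans2}(2) ensures $Q_n$ has no constant term, so I can decompose $Q_n = L_n + S_n$ with $L_n = \sum_{k<n} c_{n,k} m_k$ the linear part and $S_n$ of total degree $\geq 2$ in the $m_k$'s. Assuming inductively a construction $r'_k = m_k + S'_k$ with $S'_k$ of degree $\geq 2$ for $k < n$, I set $r'_n := r_n - \sum_{k<n} c_{n,k} r'_k$; the substitution yields $r'_n = m_n + (S_n - \sum_k c_{n,k} S'_k)$, whose remainder still has degree $\geq 2$, establishing (C2'). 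Property (C1) is preserved since $r'_n$ is a linear combination of quantities satisfying (C1). Uniqueness in (b) then follows from Proposition \ref{trans2}(1): any second candidate differs from $r_n$ by $\sum_k b_{n,k} r_k$, and imposing (C2') on both sides forces the linear-in-$m_j$ contribution $\sum_k b_{n,k} m_k$ to vanish, hence $b_{n,k}=0$ for all $k$.

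For the explicit formula in the ``moreover'' part, I would invert (C2') to obtain $m_n = r_n + T_n(r_1, \ldots, r_{n-1})$ with $T_n$ of total degree $\geq 2$ in the $r_k$'s, via a parallel induction substituting $m_k = r_k + T_k$ into $S_n$. Substituting $\mu^{\Box N}$ and applying (C1) gives
\begin{equation*}
m_n(\mu^{\Box N}) = N r_n(\mu) + T_n(N r_1(\mu), \ldots, N r_{n-1}(\mu)),
\end{equation*}
and since every monomial in $T_n$ involves at least two $r$-factors, each such monomial carries a factor $N^d$ with $d \geq 2$ after the substitution; hence $\partial_N|_{N=0}$ isolates $r_n(\mu)$. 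The main obstacle throughout is the careful bookkeeping of the ``no constant, no linear'' structure under the successive polynomial substitutions in both the $m \mapsto r$ and $r \mapsto m$ directions, where Proposition \ref{trans2}(2) plays the crucial role of eliminating constants so that the inductions close cleanly.
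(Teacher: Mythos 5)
Your proposal is correct and follows essentially the same route as the paper: (a)$\Rightarrow$(c) by inverting (C2) and applying (C1), (c)$\Rightarrow$(a) by taking the coefficient of $N$ and exploiting power associativity via $m_n(\mu^{\Box NM})=m_n((\mu^{\Box N})^{\Box M})$, (a)$\Rightarrow$(b) by the inductive triangular correction killing linear terms, and the formula (\ref{cum7}) by observing that the inverse polynomial $R_n$ has only monomials of degree at least two so that $\partial_N|_{N=0}$ isolates $r_n$. The only cosmetic difference is that you derive uniqueness from Proposition \ref{trans2}(1) together with the vanishing of the linear part, whereas the paper reads uniqueness off directly from the explicit formula; both rest on the same observation.
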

\begin{remark}
We can see from (\ref{cum7}) that cumulants are strongly related to a convolution semigroup $\{\mu_t \}_{t \geq 0}$ with $\mu_0 = \delta_0$ and to infinite divisibility.  
\end{remark}
\begin{proof}
 (a) $\Rightarrow$ (c): if there exists a sequence $\{r_n \}_{n \geq 1}$ satisfying (C1) and (C2), 
we have 
\begin{equation}\label{cum5}
\begin{split}
m_n(\mu ^{\Box N}) &= r_n(\mu ^{\Box N}) + R_n(r_1(\mu ^{\Box N}, \cdots, r_{n-1}(\mu^{\Box N})) \\
           &= Nr_n(\mu) + R_n(Nr_1(\mu), \cdots, Nr_{n-1}(\mu)) \\
           &= Nm_n(\mu) + NQ_n(m_1(\mu), \cdots, m_{n-1}(\mu)) \\ &~~~~+ R_n(Nm_1(\mu), \cdots, Nm_{n-1}(\mu) + NQ_{n-1}(m_1(\mu), \cdots, m_{n-2}(\mu))). 
\end{split}
\end{equation}
Therefore, $m_n(\mu^{\Box N})$ is a polynomial of $N$ and $m_k(\mu)$. 

(c) $\Rightarrow$ (a):  by using (M1), (M2) and the assumption (c), $m_n(\mu ^{\Box N})$ has such a form as 
\begin{equation}\label{cum4}
m_n(\mu ^{\Box N}) = Nm_n(\mu) + \sum_{l = 0} ^{L}N^l S_l (m_1(\mu), \cdots, m_{n-1}(\mu)) 
\end{equation} 
for polynomials $S_l$ and an $L \in \nat$. 
We define 
\begin{equation}\label{cum6}
\begin{split}
r_n(\mu) &:= \frac{\partial }{\partial N} m_n(\mu ^{\Box N}) \Big{|}_{N = 0} \\
         &= m_n (\mu) + S_1(m_1(\mu), \cdots, m_{n-1}(\mu)).  
\end{split}
\end{equation} 
The power associativity of $\Box$ implies (C1).  (C2) follows from (\ref{cum6}). 

(a) $\Rightarrow$ (b): for a sequence $\{r_n \}$ satisfying (C1) and (C2), we can write $r_n$ in the form $r_n = m_n + \sum_{k = 1}^{n-1}b_{n,k}m_k + T_n(m_1, \cdots, m_{n-1})$, where $T_n$ is a polynomial which does not contain  
linear terms $m_k$, $1 \leq k \leq n-1$. We define a new sequence $\{r'_n \}$ inductively as follows: $r'_1 := r_1$, 
$r'_2  = r_2 - b_{2,1}r_1$, $r'_n = r_n - \sum_{k = 1} ^{n-1}a_{n, k}r'_k$ for $n \geq 2$. Then $r'_n$ do not contain linear terms $m_k$. 

We note that $Q_n$ does not contain a constant term from Proposition \ref{trans2} (2). If there exists a sequence $\{r_n \}$ satisfying (C1) and (C2'),  
the corresponding polynomial $R_n$ in (\ref{mom2}) also does not contain 
linear terms $m_k$, $1 \leq k \leq n-1$ or a constant term. Therefore, the equality 
$m_n(\mu ^{\Box N}) = Nr_n(\mu) + R_n(Nr_1(\mu), \cdots, Nr_{n-1}(\mu))$ implies that $r_n = \frac{\partial }{\partial N} m_n(\mu ^{\Box N}) \Big{|}_{N = 0}$. 
\end{proof}

\begin{definition}\label{cumulants111}
Let $\Box$ be a power associative convolution defined on $\pmo$ satisfying 
(M1) and (M2). Then the polynomials $r_n$ satisfying (C1) and (C2') 
are called the cumulants for the convolution $\Box$. Cumulants are unique. 
\end{definition}
\begin{remark}
This definition extends the cumulants for the tensor, free, Boolean and monotone convolutions. 
\end{remark}
We can prove the existence of cumulants.   
\begin{theorem}
We assume the conditions (M1) and (M2) for a power associative convolution $\Box$. Then cumulants of $\Box$ exist. 
\end{theorem}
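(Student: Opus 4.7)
The plan is to verify condition (c) of Proposition \ref{existence-uniqueness}: for every $n\ge 1$, the quantity $m_n(\mu^{\Box N})$ is a polynomial in the variables $m_1(\mu),\ldots,m_n(\mu)$ and $N$. The implication (c) $\Rightarrow$ (b) in that proposition then produces a sequence $\{r_n\}$ satisfying (C1) and (C2'), which is precisely what Definition \ref{cumulants111} calls for.

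I would proceed by induction on $n$. For the base case $n=1$, note that $P_1$ is a polynomial in $2\cdot 1-2=0$ variables, hence a constant; by (M2) this constant vanishes. Thus $m_1(\mu\Box\nu)=m_1(\mu)+m_1(\nu)$, and power associativity together with a trivial induction on $N$ yields $m_1(\mu^{\Box N})=Nm_1(\mu)$, which is of the required form.

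For the inductive step, suppose the claim holds for all $k<n$. Applying (M1) with $\nu=\mu^{\Box N}$, together with power associativity, gives the recursion
\begin{equation*}
m_n(\mu^{\Box(N+1)})-m_n(\mu^{\Box N})=m_n(\mu)+P_n\bigl(m_1(\mu),\ldots,m_{n-1}(\mu),m_1(\mu^{\Box N}),\ldots,m_{n-1}(\mu^{\Box N})\bigr).
\end{equation*}
By the inductive hypothesis each $m_k(\mu^{\Box N})$ with $k<n$ is a polynomial in $N$ whose coefficients are polynomial in $m_1(\mu),\ldots,m_k(\mu)$. Substituting into $P_n$, the right-hand side becomes a polynomial in $N$ of some degree $d$ with coefficients polynomial in $m_1(\mu),\ldots,m_n(\mu)$. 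Telescoping from the base value $m_n(\mu^{\Box 1})=m_n(\mu)$ and invoking the elementary fact that $\sum_{k=1}^{N-1}k^j$ is a polynomial of degree $j+1$ in $N$, one concludes that $m_n(\mu^{\Box N})$ itself is a polynomial in $N$ with coefficients polynomial in $m_1(\mu),\ldots,m_n(\mu)$, closing the induction.

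I do not anticipate a substantive obstacle; the work is largely bookkeeping. The key structural point is that in the recursion the ``new'' moment $m_n(\mu)$ enters only through the explicit additive term, while $P_n$ contributes only lower-order moments $m_k(\mu)$ with $k<n$, so the polynomiality statement for index $n$ closes cleanly on the desired set of variables, matching exactly the form required by condition (c).
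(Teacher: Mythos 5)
Your proof is correct and takes essentially the same route as the paper: the paper also reduces the theorem to condition (c) of Proposition \ref{existence-uniqueness} and then omits the verification that $m_n(\mu^{\Box N})$ is a polynomial in $N$ by citing the earlier monotone-cumulants paper \cite{H-S}. Your induction on $n$ with the recursion from (M1) and the telescoping sum is exactly the standard argument being deferred to, so you have simply supplied the details the paper leaves out.
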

\begin{proof}It is sufficient to prove that $m_n(\mu^{\Box N})$ is a polynomial of $N$ due to Proposition \ref{existence-uniqueness}. Then the proof is the same as in \cite{H-S}, which we omit here. 
\end{proof}

We discuss when the additivity of cumulants holds. In the proof of the following theorem, we assume the convolution is defined on $\pc$ so that moments determine a unique probability measure. 
\begin{theorem}\label{add1}
Let $\Box$ be a power associative convolution defined on $\pc$ satisfying (M1) and (M2). Let $r_n$ be the cumulants. Then the following conditions are equivalent. 
\begin{itemize}
\item[(1)] $r_n(\mu \Box \nu) = r_n(\mu) + r_n(\nu)$ for all $n$ and $\mu$,  $\nu \in \pc$. 
\item[(2)] $\Box$ is associative and commutative, and moreover, $P_n$ in (\ref{mom1}) does not contain linear terms $m_k(\mu)$ or $m_k(\nu)$, $1 \leq k \leq n-1$. 
\end{itemize}
\end{theorem}
\begin{proof}
(1) $\Rightarrow$ (2): the associativity and commutativity follow immediately since a probability measure with compact support is determined by the cumulants.   
From  (M1) and (\ref{mom2}) we obtain the identity 
\begin{equation*}
\begin{split}
&P_n(m_1(\mu), \cdots, m_{n-1}(\mu), m_1(\nu), \cdots, m_{n-1}(\nu)) \\
&~~~~~~=Q_n(m_1(\mu), \cdots, m_{n-1}(\mu)) + Q_n(m_1(\nu), \cdots, m_{n-1}(\nu)) \\
&~~~~~~~~~~~~~~~~+ R_n(r_1(\mu) + r_1(\nu), \cdots, r_{n-1}(\mu) + r_{n-1}(\nu)). 
\end{split}
\end{equation*}
It follows from  (C2') that $Q_n$ and $R_n$ do not contain linear terms. 

(2) $\Rightarrow$ (1): Using  (M1), (C2') and (\ref{mom2})  we have 
\begin{equation*}
\begin{split}
r_n(\mu \Box \nu) &= m_n(\mu \Box \nu) + Q_n(m_1(\mu \Box \nu), \cdots, m_{n-1}(\mu \Box \nu)) \\
           &= r_n(\mu) + r_n(\nu) + P_n(m_1(\mu), \cdots, m_{n-1}(\mu), m_1(\nu), \cdots, m_{n-1}(\nu)) \\ 
&~~~~~~~~~~~+ R_n(r_1(\mu), \cdots, r_{n-1}(\mu)) + R_n(r_1(\nu), \cdots, r_{n-1}(\nu)) \\
&~~~~~~~~~~~~~~~~~+ Q_n(m_1(\mu \Box \nu), \cdots, m_{n-1}(\mu \Box \nu)).   
\end{split}
\end{equation*}
Therefore, there exists a polynomial $U_n$ which does not contain linear terms such that $r_n(\mu \Box \nu) = r_n(\mu) + r_n(\nu) + U_n(r_1(\mu), \cdots, r_{n-1}(\mu), r_1(\nu), \cdots, r_{n-1}(\nu))$. We replace $\mu$ and $\nu$ by $\mu^{\Box N}$ and $\nu^{\Box N}$, respectively. The associativity and commutativity implies that 
$r_n(\mu^{\Box N} \Box \nu ^{\Box N}) = r_n((\mu \Box \nu)^{\Box N}) = Nr_n(\mu \Box \nu)$.  Then $Nr_n(\mu \Box \nu) = N r_n(\mu) + N r_n(\nu) + U_n(Nr_1(\mu), \cdots, Nr_{n-1}(\mu), Nr_1(\nu), \cdots, Nr_{n-1}(\nu))$. This can be seen as an identity between polynomials of $N$; therefore, we have $U_n = 0$. 
\end{proof}

Limit theorems can be formulated in terms of moments and cumulants. The proofs are easy. 
\begin{theorem}\label{centrallimit123}
 Let $\Box$ be a power associative convolution defined on $\pmo$ satisfying (M1) and (M2). Let $r_n$ be the cumulants. \\
(1) (Central limit theorem) For $\mu \in \pmo$ with $m_1(\mu) = 0$ and $m_2(\mu) = 1$, we define $\mu_N := (D_{\frac{1}{\sqrt{N}}} \mu) ^{\Box N}$. Then 
$r_1(\mu_N) \to 0$, $r_2(\mu_N) \to 1$ and $r_n(\mu_N) \to 0$ as $N \to \infty$ for any $n \geq 3$.\\
(2) (Poisson's law of small numbers)  Let $\{\mu^{(N)} \}$ be a sequence such that for any $n \geq 1$ $Nm_n(\mu^{(N)}) \to \lambda > 0$ as $N \to \infty$. We define $\mu_N := (\mu ^{(N)})^{\Box N}$. Then $r_n(\mu_N) \to \lambda$ as $N \to \infty$ for any $n \geq 1$. 
\end{theorem}

\section{Cumulants for deformed convolutions}\label{Cumulants}
We define $r_n ^T(\mu):= r_n(\mu, T\mu)$.  $r_n ^T(\mu)$ turn out to be cumulants for the convolution $\rt$ in the sense of Definition \ref{cumulants111}.  
\begin{proposition}
We assume that there exists a polynomial $V_n$ of $n+1$ variables, which does not contain a constant term, such that 
\begin{equation}\label{cum12}
m_n(T\mu) = V_n(m_1(\mu), \cdots, m_{n+1}(\mu))
\end{equation}
for any $n \geq 1$. Then the conditions (M1) and (M2) hold for the convolution $\rt$. 
\end{proposition}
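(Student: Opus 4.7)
The plan is to extract both (M1) and (M2) directly from the asymptotic expansion of $H_{\mu \rt \nu}(z) = H_\mu(H_{T\nu}(z)) + H_\nu(z) - H_{T\nu}(z)$ given in Proposition \ref{asso111}. Writing $H_\lambda(z) = z + \sum_{k \geq 1} b_k(\lambda)\, z^{1-k}$ in the sense of Lemma \ref{cum11}, the coefficients $b_k$ and moments $m_k$ determine each other triangularly, with $b_k(\lambda)$ a polynomial in $m_1(\lambda),\dots,m_k(\lambda)$, so it suffices to control $b_n(\mu \rt \nu)$ in terms of lower-order data.

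First I would substitute $w = H_{T\nu}(z)$ into the $b$-expansion of $H_\mu$ and cancel $H_{T\nu}$, obtaining
\[
H_{\mu \rt \nu}(z) = H_\nu(z) + b_1(\mu) + \sum_{k \geq 2} b_k(\mu)\, G_{T\nu}(z)^{k-1}.
\]
Using the standard expansion $G_{T\nu}(z)^{k-1} = \sum_{j \geq 0} c_{k,j}(m_1(T\nu),\dots,m_j(T\nu))\, z^{-(k-1+j)}$ for universal polynomials $c_{k,j}$ with $c_{k,0}=1$, reading off the coefficient of $z^{1-n}$ yields, for $n \geq 2$,
\[
b_n(\mu \rt \nu) = b_n(\mu) + b_n(\nu) + \sum_{k=2}^{n-1} b_k(\mu)\, c_{k,n-k}(m_1(T\nu),\dots,m_{n-k}(T\nu)),
\]
while for $n=1$ the correction sum is empty and $b_1(\mu \rt \nu) = b_1(\mu) + b_1(\nu)$. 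The crucial observation is that only $m_j(T\nu)$ with $j \leq n-k \leq n-2$ enters the correction.

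Second I would invoke the hypothesis: for every $j \leq n-2$, the relation $m_j(T\nu) = V_j(m_1(\nu),\dots,m_{j+1}(\nu))$ shows that $m_j(T\nu)$ is a polynomial in $m_1(\nu),\dots,m_{n-1}(\nu)$. Combined with the triangular formula $b_k(\mu) = -m_k(\mu) + W_k(m_1(\mu),\dots,m_{k-1}(\mu))$, the correction is a polynomial in moments of $\mu$ and $\nu$ of orders at most $n-1$. Inverting $m_n(\mu \rt \nu) = -b_n(\mu \rt \nu) + W_n(m_1(\mu \rt \nu),\dots,m_{n-1}(\mu \rt \nu))$ and using induction on $n$ to substitute the already-established expressions $m_j(\mu \rt \nu) = m_j(\mu) + m_j(\nu) + P_j(\dots)$ for $j<n$ produces (M1). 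For (M2), I would observe that $H_{\delta_0}(z) = z$ makes the basic formula collapse to $H_{\delta_0 \rt \delta_0}(z) = H_{T\delta_0}(z) + z - H_{T\delta_0}(z) = z$ regardless of $T$, so all moments of $\delta_0 \rt \delta_0$ vanish, and evaluating the identity of (M1) at $\mu = \nu = \delta_0$ forces $P_n(0,\dots,0) = 0$. The main obstacle is bookkeeping of orders: although the hypothesis permits $m_n(T\mu)$ to depend on $m_{n+1}(\mu)$, this one-step shift is precisely compensated by the fact that $G_{T\nu}^{k-1}$ only contributes $m_j(T\nu)$ with $j \leq n-2$, so the required moments of $\nu$ remain within the allowed range $\leq n-1$.
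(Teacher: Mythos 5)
Your proposal is correct and follows essentially the same route as the paper: the paper's two-line proof simply cites Lemma \ref{cum11} (the triangular relation $b_n = -m_n + W_n$) and the preceding lemma giving $b_n(\mu_1 \rhd_{\nu_2}\mu_2) = b_n(\mu_1)+b_n(\mu_2)+Y_n(\dots)$, and your expansion of $H_\mu\circ H_{T\nu}+H_\nu-H_{T\nu}$ is precisely the computation by which that lemma was proved, with the same order-bookkeeping showing only $m_j(T\nu)$, $j\le n-2$, hence $m_i(\nu)$, $i\le n-1$, enters. Your derivation of (M2) from $\delta_0\rt\delta_0=\delta_0$ is a slight (and clean) variant of the paper's tracking of constant terms through $W_n$ and $Y_n$, and matches the paper's own remark that (M2) is equivalent to $\delta_0\Box\delta_0=\delta_0$.
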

\begin{proof}
(M1) follows from (\ref{cum14}) and (\ref{cum12});  (M2) follows from the fact that both $W_n$ and $Y_n$ do not contain constant terms nor linear terms.  
\end{proof}
\begin{theorem}\label{cum34}
Let $T$: $\pmo \to \pmo$ be a map satisfying (\ref{asso}) and (\ref{cum12}). Then $r_n(\mu, T\mu)$ satisfy the conditions (C1) and (C2') for the convolution $\rt$. 
\end{theorem}
\begin{proof}
(C2') follows from the definition of c-monotone cumulants and (\ref{cum12}). (C1) can be proved showed as follows: $r_n^T (\mu^{\rt N}) = r_n(\mu^{\rt N}, T(\mu^{\rt N})) = r_n((\mu, T\mu)^{\rhd N}) = Nr_n(\mu, T\mu) = Nr_n ^T(\mu)$.  
\end{proof}
The c-free cumulants $R_n(\mu, T\mu)$ satisfy the conditions (C1) and (C2') 
under similar conditions.   
\begin{proposition}
Let $T$: $\pmo \to \pmo$ be a map satisfying the condition (\ref{Bo}). 
We assume that the $n$-th moment of $T\mu$ is of the form 
\begin{equation}\label{Tmom1}
m_n(T\mu) = V_n(m_1(\mu), \cdots, m_{n+1}(\mu))  
\end{equation}
for any $n \geq 1$, where $V_n$ is a polynomial which does not contain a constant term. 
Then the convolution $\boxplus_T$ satisfies the conditions (M1) and (M2), and $R_n(\mu, T\mu)$ 
satisfies the conditions (C1) and (C2').    
\end{proposition}
\begin{remark}
All the convolutions studied in \cite{BW1,BW2,KW,KY,O1} satisfy the condition (\ref{Tmom1}). 
\end{remark}

\section{Limit theorems for deformed convolutions}\label{Lim2}
We can apply Theorem \ref{centrallimit123} to the convolution $\rt$ under the conditions (\ref{asso}) and (\ref{cum12}). 
We summarize the statements combining Theorem \ref{centrallimit123} and Theorem \ref{cum34}. 

\begin{theorem}\label{limit11}Let $T$: $\pmo \to \pmo$ be a map which satisfies (\ref{asso}) and (\ref{cum12}). \\
(1) (Central limit theorem) Let $\mu$ be a probability measure in $\pmo$ with mean $0$ and variance $1$. We define 
$\mu_N := \big{(}D_{\frac{1}{\sqrt{N}}}\mu \big{)}^{\rt N}$.  
Then $m_n(\mu_N)$ converges to $m_n(\nu^{(T)}_1)$, where $m_n(\nu^{(T)}_t)$ are characterized by 
\begin{equation}\label{centrallim}
\frac{\partial}{\partial t} H_{\nu^{(T)}_t}(z) = -\frac{1}{H_{T\nu^{(T)}_t}(z)}.\end{equation}  
(2) (Poisson's law of small numbers) Let $\{ \mu^{(N)} \}_{N=1} ^{\infty}$ be a sequence of probability measures in $\pmo$ such that $N m_n(\mu^{(N)}) \to \lambda > 0$ as $N \to \infty$ for all $n \geq 1$. 
We define $\mu_N := (\mu^{(N)})^{\rt N}$. Then $m_n(\mu_N)$ converges to $m_n(p^{(T)}_{\lambda})$, where  
 $m_n(p^{(T)}_{\lambda})$ are characterized by 
\begin{equation}\label{Poissonlim}
\frac{\partial}{\partial \lambda} H_{p^{(T)}_{\lambda}}(z) = \frac{H_{Tp^{(T)}_{\lambda}}(z)}{1 - H_{Tp^{(T)}_{\lambda}}(z)}. 
\end{equation}
\end{theorem} 

In this section we calculate the limit distributions for $T$ constructed in Sections \ref{Sec4} and \ref{Sec5}. 
If $T$ is invertible, we can use monotone cumulants to calculate the limit distributions since $\mu \rt \nu = T^{-1}(T\mu \rhd T\nu)$. Cumulants introduced in Section \ref{Cumulants}, however, enable us to calculate the limit distributions for even non-invertible $T$. In this section, we always use cumulants introduced in Section \ref{Cumulants}. 

\subsection{Transformations $V_{t,u,a}$}
We now calculate the central limit measure for the convolution $\rhd_{t, u, a}$.  We only calculate the two cases $a = 0$ and $t = 0$; otherwise explicit expressions of  the 
limit measures are difficult. For simlicity, let $r^{(t,u,a)}_n(\mu)$ denote the cumulants $r^{V_{t,u,a}}_n(\mu)$. In this section we use two logarithms $\log_{[1]}$ and $\log_{[2]}$:  $\log_{[1]}(z)$ is defined by $\log_{[1]}(z) := \log |z| + i\arg (z)$,   
$\arg (z) \in (-\pi, \pi)$, $z \in \comp \backslash (-\infty, 0]$;  
$\log_{[2]}$ is defined by $\log |z| + i\arg (z)$, $\arg (z) \in (0, 2\pi)$, 
$z \in \comp \backslash [0, \infty)$. 
Let $\sqrt{z}$ be $\exp(\frac{1}{2}\log_{[2]} z)$ for $z \in \comp \backslash [0, \infty)$. Then, for instance,  the Cauchy transform of the normalized arcsine law becomes  $\frac{1}{\sqrt{z^2 - 2}}$ for $z \in \com+$. 
 
\begin{theorem}\label{centrallim2}
(1) Let $\mu$ be a probability measure in $\pmo$ with mean $0$ and variance $1$. Then 
$\mu_N := (D_{\frac{1}{\sqrt{N}}}\mu)^{\rhd_{t,u,0} N}$ converges weakly to a Kesten distribution $\nu^{(t,0)}$. 
The absolutely continuous part  is  $\frac{1}{2\pi}\frac{\sqrt{2t -x^2}}{1 - (1 - \frac{t}{2})x^2}dx$ on $[-\sqrt{2t}, \sqrt{2t}]$. There is no singular part for $t \geq 1$, 
but $\nu^{(t,0)}$ contains atoms at $x = \pm \frac{1}{\sqrt{1 - \frac{t}{2}}}$ for $t < 1$. \\
(2) Let $\mu$ be a probability measure in $\pmo$ with mean $0$ and variance $1$. Then 
$\mu_N := (D_{\frac{1}{\sqrt{N}}}\mu)^{\rhd_{0,0,a} N}$ converges weakly to a probability measure $\nu^{(0,a)}$. The absolutely continuous part of $\nu^{(0,a)}$ is given by  
\begin{equation}
\begin{split}
\nu^{(0,a)}|_{ac} = \begin{cases}
&\frac{a}{(\log|1 + \frac{a}{x}| - ax)^2 + \pi^2} dx,~~x \in [-a, 0],~~a > 0, \\  
&\frac{|a|}{(\log|1 + \frac{a}{x}| - ax)^2 + \pi^2} dx,~~x \in [0, |a|],~~a < 0.
\end{cases}\end{split}
\end{equation}
 $\nu^{(0,a)}$ contains two atoms: one in $(-\infty, -a)$ and the other in $(0, \infty)$ if $a > 0$; one in $(-\infty, 0)$ and the other in $(|a|, \infty)$ if $a < 0$.   
\end{theorem}
\begin{remark}
(1) Kesten distributions also appear in the central limit theorem of $\boxplus_{\mathcal{U}_t}$ \cite{BW1,BW2} with the parameter $t$ replaced by $2t$.   \\
(2) The limit distribution of (2) is symmetric only in the case of $a = 0$ where the convolution becomes a Boolean convolution (cf. Proposition \ref{symV}).  
\end{remark}
\begin{proof}
(1) Let $\{\nu^{(t, 0)}_s \}_{s \geq 0}$ be a (formal) convolution semigroup which is a solution of (\ref{centrallim}) for $T = V_{t,u,0}$. (The word ``formal'' means that the limit moments might not be deterministic. Therefore, we consider $\nu^{(t,0)}_s$ as a sequence of moments.) We note that $m_1(\nu^{(t, 0)}_s) = sr^{(t, u, 0)} _1(\nu^{(t, 0)}_1) = 0$. Then $H_{V_{t,u,a}\nu^{(t, a)}_s}(z) = tH_{\nu^{(t, a)}_s}(z) + (1 - t)z$. We let $H_s(z)$ denote $H_{\nu^{(t, 0)}_s}(z)$ for simplicity. 
(\ref{centrallim}) can be integrated and we obtain 
$\frac{t}{2}H_s(z)^2 + (1-t)z H_s(z) = -s +(1- \frac{t}{2})z^2$, which implies 
\begin{equation}
G_s(z) = \frac{(\frac{1}{2} - \frac{t}{2}) + \frac{1}{2}\sqrt{z^2 - 2st}}{(1 - \frac{t}{2})z^2 - s}. 
\end{equation} 
$G_1$ is the Cauchy transform of a Kesten distribution (see \cite{BW2}), whose support is compact. 
Then the weak convergence holds (see Theorem 4.5.5 of \cite{KLC}). \\
(2) Let $\{\nu^{(0, a)}_s \}_{s \geq 0}$ be a (formal) convolution semigroup which is a solution of (\ref{centrallim}) for $T = V_{0,0,a}$. We note that $m_1(\nu^{(0, a)}_s) = 0$ and $\sigma^2(\nu^{(0, a)}_s) = r^{(0, 0, a)}_2(\nu^{(0, a)}_s) =  s$. Then $H_{V_{0,0,a}\nu^{(0, a)}_s}(z) = z + as$. Let $H_s(z)$ denote $H_{\nu^{(0, a)}_s}(z)$  for simplicity. 
We have 
\begin{equation}
\begin{split}
  H_s(z) &= -\int_0 ^s \frac{1}{z + ar} dr + z \\
         &= z - \frac{1}{a}\log_{[1]}\Big{(}1 + \frac{as}{z}\Big{)}. 
\end{split}
\end{equation}
\textbf{Case $a > 0$}: the absolutely continuous part of the limit distribution is $\frac{a}{(\log|1 + \frac{a}{x}| - ax)^2 + \pi^2} dx$ supported on the interval $\{x \in \real: G_1(x + i0) < 0 \} = [-a, 0]$. We can show that the limit distribution contains an atom in $(0, \infty)$ and the other in $(-\infty, -a)$. \\
\textbf{Case $a < 0$}: the absolutely continuous part is $\frac{|a|}{(\log|1 + \frac{a}{x}| - ax)^2 + \pi^2} dx$ supported on the interval  $[0, |a|]$. 
We can show that the limit distribution contains an atom in $(|a|, \infty)$ and the other in $(-\infty, 0)$. 

We note that the case $a = 0$ corresponds to the Boolean convolution, and hence, the limit distribution is $\frac{1}{2}(\delta_{-1} + \delta_1)$.    
\end{proof}

We calculate the limit distribution for Poisson's law of small numbers. We consider only 
the case $T = V_{0,u,a}$; otherwise, the explicit form is difficult to obtain.  
\begin{theorem}
Let $\{ \mu^{(N)} \}_{N=1} ^{\infty}$ be a sequence of probability measures in $\pmo$ such that $N m_n(\mu^{(N)}) \to \lambda > 0$ as $N \to \infty$ for all $n \geq 1$. 
Then $\mu_N := (\mu^{(N)})^{\rhd_{0,u,a} N}$ converges weakly to a compactly supported distribution $p^{(u,a)}_{\lambda}$. The absolutely continuous part of $p^{(u,a)}_{\lambda}$ is given by 
\begin{equation}
\begin{split}
p^{(u,a)}_{\lambda}|_{ac} = \begin{cases}
&\frac{a - u}{\big{(}\log|1 + \frac{(a-u)\lambda}{x-1}| - (a-u)(x - \lambda)\big{)}^2 + \pi^2} dx,~~x \in [1- (a-u)\lambda, 1],~~a > u, \\ 
&\frac{|a-u|}{\big{(}\log|1 + \frac{(a-u)\lambda}{x-1}| - (a-u)(x-\lambda)\big{)}^2 + \pi^2} dx,~~x \in [1, 1 + (u-a)\lambda],~~a < u.
\end{cases}\end{split}
\end{equation}
$p^{(u,a)}_{\lambda}$ contains two atoms: one in $(-\infty, 1- (a-u)\lambda)$ and the other in $(1, \infty)$ for $a > u$; one in $(-\infty, 1)$ and the other in $(1 + (u-a)\lambda, \infty)$ for $a < u$.   
\end{theorem}
\begin{remark}
One can see that $p^{(u,a)}_{\lambda}$ is in $\p+$ if and only if $u \geq a $ (cf. Proposition \ref{symV}).  
\end{remark}
\begin{proof}
We note that $m_1(p^{(u,a)}_{\lambda}) = r^{(0, u, a)}_1(p^{(u,a)}_{\lambda}) = \lambda$ and $\sigma^2(p^{(u,a)}_{\lambda}) = \lambda$. Then we obtain the differential equation 
\begin{equation}
\frac{\partial}{\partial \lambda} H_{p^{(u, a)}_{\lambda}}(z) = -1 - \frac{1}{z - 1 + (a - u)\lambda}. 
\end{equation}
The remaining arguments are similar to Theorem \ref{centrallim2} and we omit the proof. 
\end{proof}

\subsection{Deformations related to $\rhd$-infinitely divisible distributions} 
For a compactly supported $\rhd$-infinitely divisible distribution $\xi$, let $\{ \xi_t \}_{t \geq 0}$ be the corresponding weakly continuous $\rhd$-convolution semigroup with $\xi_0 = \delta_0$ and $\xi_1 = \xi$. 
Then $\xi_t$ is compactly supported for every $t > 0$ \cite{Mur3}. 
Let $\{ \nu^{[\xi,t]}_s \}_{s \geq 0}$ and $\{p^{[\xi, t]}_{s} \}_{s \geq 0}$ be the (formal) convolution semigroups 
defined by (\ref{centrallim}) and (\ref{Poissonlim}), respectively. 
Let $r^{[\xi, t]}_n(\mu)$ denote $r^{\Xi^\xi _t}_n(\mu)$.  
Since $r^{[\xi, t]}_2(\nu^{[\xi,t]}_s) = \sigma^2(\nu^{[\xi,t]}_s) = s$ we obtain $\Xi_{t} ^{\xi} (\nu^{[\xi,t]}_s) = \xi_{st}$. Similarly, we obtain $\Xi_{t} ^{\xi} (p^{[\xi,t]}_\lambda) = \xi_{\lambda t}$. Therefore, (\ref{centrallim}) and (\ref{Poissonlim}) become 
\begin{gather}
\frac{\partial}{\partial s} H_{\nu^{[\xi,t]}_s}(z) = -\frac{1}{H_{\xi_{st}}(z)}, \label{xi1}\\ 
\frac{\partial}{\partial \lambda} H_{p^{[\xi, t]}_{\lambda}}(z) = \frac{H_{\xi_{t\lambda}}(z)}{1 - H_{\xi_{t\lambda}}(z)}. \label{xi2}
\end{gather}
These equations have been defined in the sense of formal power series. However, once equations (\ref{xi1}) and (\ref{xi2}) are understood to be ordinary differential equations, 
the solutions are analytic outside a ball for every $s > 0$ and $\lambda > 0$. As a result, (\ref{xi1}) and (\ref{xi2}) give moments of compactly supported probability measures for each $s > 0$ and $\lambda$. Therefore, $\nu^{[\xi,t]}_s$ and $p^{[\xi, t]}_{\lambda}$ make sense as uniquely determined probability measures. Moreover, the convergence of moments in Theorem \ref{limit11} becomes the weak convergence. We summarize the above arguments. 
Let $\rhd_{[\xi, t]}$ denote $\rhd_{\Xi^\xi_t}$. 
\begin{theorem}\label{limit12} Let $\xi$ be a $\rhd$-infinitely divisible distribution in $\pc$. \\
(1) (Central limit theorem) Let $\mu$ be a probability measure in $\pmo$ with mean $0$ and variance $1$. Then 
$\mu_N := \big{(}D_{\frac{1}{\sqrt{N}}}\mu \big{)}^{\rhd_{[\xi,t]} N}$ converges to $\nu^{[\xi, t]}_1$ weakly.  \\ 
(2) (Poisson's law of small numbers) 
Let $\{ \mu^{(N)} \}_{N=1} ^{\infty}$ be a sequence of probability measures in $\pmo$ such that $N m_n(\mu^{(N)}) \to \lambda > 0$ as $N \to \infty$ for all $n \geq 1$. 
Then $\mu_N := (\mu^{(N)})^{\rhd_{[\xi, t]} N}$ converges to $p^{[\xi, t]}_{\lambda}$ weakly. 
\end{theorem} 

We calculate the limit distributions explicitly when $\xi$ is the normalized arcsine law. 

\begin{theorem} Let $\eta$ be the normalized arcsine law. \\
(1) The limit distribution  $\nu^{[\eta, t]}_1$ is the Kesten distribution $\nu^{(t,0)}$. \\
(2) The absolutely continuous part of $p^{[\eta, t]}_{\lambda}$ is supported on 
$[-\sqrt{2\lambda t}, \sqrt{2\lambda t}] \cup [1, \sqrt{2\lambda t+1}]$. The singular part consists of atoms: an atom exists in $(\sqrt{2\lambda t+1}, \infty)$; another exists in $(\sqrt{2\lambda t}, 1 )$ if $0 < t < \frac{1}{2\lambda}$ and $(1 - \frac{1}{t})\sqrt{2\lambda t} - \frac{1}{t} \log (1 - \sqrt{2\lambda t}) - \lambda < 0$; the other exists in $(-\infty, -\sqrt{2\lambda t})$ if $(\frac{1}{t} - 1)\sqrt{2\lambda t} - \frac{1}{t}\log(\sqrt{2\lambda t} + 1) > 0$. 
\end{theorem}
\begin{remark}
It is remarkable that the limit distribution in (1) also appears in Theorem 10 of \cite{KW} with the parameter $t$ replaced by $2t$.   
\end{remark}
\begin{proof}
(1) The differential equation (\ref{xi1}) becomes 
\begin{equation}
\frac{\partial}{\partial s} H_{\nu^{[\eta,t]}_s}(z) = -\frac{1}{\sqrt{z^2 - 2ts}},
\end{equation}
which implies $H_{\nu^{[\eta,t]}_s}(z) = (1- \frac{1}{t})z + \frac{1}{t}\sqrt{z^2 - 2ts}$. Therefore, the limit distribution is the Kesten distribution. \\
(2) The differential equation (\ref{xi2}) becomes 
\begin{equation}
\frac{\partial}{\partial \lambda} H_{p^{[\xi, t]}_{\lambda}}(z) = \frac{\sqrt{z^2 - 2 \lambda t}}{1 - \sqrt{z^2 - 2 \lambda t}}.  
\end{equation}
We can solve this and obtain 
\begin{equation}
H_{p^{[\xi, t]}_{\lambda}}(z) = \Big{(}1 -\frac{1}{t}\Big{)}z + \frac{1}{t}\sqrt{z^2 - 2\lambda t} + \frac{1}{t}\log_{[1]}\Big{(}\frac{\sqrt{z^2 - 2 \lambda t} - 1}{z - 1} \Big{)} - \lambda. 
\end{equation}
One can see that $\lim_{\im z \searrow 0}H_{p^{[\xi, t]}_{\lambda}}(z) > 0$ 
if and only if $\re z \in (-\sqrt{2\lambda t}, \sqrt{2\lambda t}) \cup [1, \sqrt{2\lambda t+1}]$. We remark that $H_{p^{[\xi, t]}_{\lambda}}(x)$ is strictly increasing
 in the intervals $(-\infty, -\sqrt{2\lambda t})$,  $(\sqrt{2\lambda t}, 1)$ and $(\sqrt{2\lambda t+1}, \infty)$. Then it is not difficult to show the existence of atoms. 
\end{proof}

\section*{Acknowledgements} 
This paper owes much to the joint work with Mr. Hayato Saigo on cumulants. The author would like to thank Mr. Hayato Saigo for many discussions about quantum probability, independence, umbral calculus and in particular, cumulants. He is grateful to Professor Izumi Ojima for reading the manuscript, suggesting improvements of many sentences and discussions about independence. He thanks Professor Marek Bo\.{z}ejko for guiding him to the notion of conditionally free independence and an important reference \cite{Fra}. He also thanks Professor Uwe Franz for fruitful discussions and for giving a seminar on the categorical treatment of independence during the visit to Kyoto. He also thanks Professor Shogo Tanimura, Mr. Ryo Harada, Mr. Hiroshi Ando and Mr. Kazuya Okamura for their comments and encouragement. This work was supported by Japan Society for the Promotion of Science, KAKENHI 21-5106. The author also thanks the support by Global COE Program at Kyoto University.


\begin{thebibliography}{122}
   \bibitem{A-L-S} L. Accardi, R. Lenczewski and R. Sa\l apata, Decompositions of the free product of graghs, Infin. Dim. Anal. Quantum Probab. Rel. Topics \textbf{10}, no. 3 (2007), 303--334.  
   \bibitem{Akh} N. I. Akhiezer, \textit{The Classical Moment Problem} (English transl.), Oliver and Boyd, 1965.
    \bibitem{B-S1} A. Ben Ghorbal and M. Sch\"{u}rmann, Non-commutative notions of stochastic independence, Math. Proc. Comb. Phil. Soc. \textbf{133} (2002), 531--561.   
 \bibitem{Be-Po} E. Berkson and H. Porta, Semigroups of analytic functions and composition operators, Michigan Math. J. \textbf{25} (1978), 101--115.
     \bibitem{Ber1} H. Bercovici, Multiplicative monotonic convolution, Illinois J. Math. \textbf{49}, no. 3 (2005), 929--951.  
   \bibitem{Ber2} H. Bercovici, On Boolean convolutions, Operator Theory \textbf{20}, 7--13, Theta. Ser. Adv. Math. 6, Theta, Bucharest, 2006.  
\bibitem{Be-Vo} H. Bercovici and D. Voiculescu, Free convolution of measures with unbounded support, Indiana Univ. Math. J. \textbf{42}, No. 3 (1993), 733--773. 
    
\bibitem{BLS} M. Bo\.{z}ejko, M. Leinert and R. Speicher, Convolution and limit theorems for conditionally free random variables, Pac. J. Math. \textbf{175} (1996), no. 2, 357--388. 
     \bibitem{BS} M. Bo\.{z}ejko and R. Speicher, $\psi$-independent and symmetrized white noises, Quantum Probability and Related Topics (L. Accardi, ed.), World Scientific, Singapore, \textbf{VI} (1991), 219--236.   
   \bibitem{BW1} M. Bo\.{z}ejko and J. Wysocza\'{n}ski, New examples of convolutions and non-commutative central limit theorems, Banach Center Publ., \textbf{43} (1998), 95--103.  
   \bibitem{BW2} M. Bo\.{z}ejko and J. Wysocza\'{n}ski, Remarks on $t$-transformations of measures and convolutions, Ann. I. H. Poincar\'{e}-PR \textbf{37} (2001), 737--761. 
   \bibitem{KLC} K. L. Chung, \textit{A Course in Probability Theory}, Harcourt, Brace \& World, Inc., 1968.  
    \bibitem{Fra4} U. Franz, Monotone independence is associative, Infin. Dim. Anal. Quantum Probab. Rel. Topics \textbf{4}, no. 3 (2001), 401--407. 
    \bibitem{BFGKT} U. Franz, L\'{e}vy processes on quantum groups and dual groups, in \textit{Quantum independent increment processes II}, Lecture Notes in Math., vol. 1866, Springer-Verlag, 2006. 
\bibitem{Fra} U. Franz, Multiplicative monotone convolutions, Banach Center Publ., \textbf{73} (2006), 153--166. 
   \bibitem{Fra2} U. Franz, Boolean convolution of probability measures on the unit circle, Analyse et probabilit\'es, S\'eminaires et Congr\`es \textbf{16} (2009), 83--93. 
 \bibitem{Fra3} U. Franz, Monotone and boolean convolutions for non-compactly supported probability measures, Indiana Univ. Math. J. \textbf{58}, no. 3 (2009), 1151--1186. 
  
\bibitem{Has2} T. Hasebe, Monotone convolution semigroups, Studia Math. \textbf{200} (2010), 175--199. arXiv:1002.3430v2.    
\bibitem{Has3} T. Hasebe, Monotone convolution and monotone infinite divisibility from complex analytic viewpoint, Infin. Dim. Anal. Quantum Probab. Rel. Topics \textbf{13}, No. 1 (2010), 111--131.   arXiv:1002.3430v2.   
\bibitem{Has4} T. Hasebe, Conditionally monotone independence II: Multiplicative convolutions and infinite divisibility, Complex Analysis and Operator Theory, to appear. arXiv:0910.1319v3. 
   \bibitem{H-S} T. Hasebe and H. Saigo, The monotone cumulants, to appear in Ann. Inst. Henri Poincar\'{e} Probab. Stat. arXiv:0907.4896v3.   
    \bibitem{Kry1} A. D. Krystek, Infinite divisibility for the conditionally free convolution, Infin. Dim. Anal. Quantum Probab. Rel. Topics \textbf{10}, no. 4 (2007), 499--522. 
   \bibitem{KW} A. D. Krystek and \L. J. Wojakowski, Associative convolutions arising from conditionally free convolution, Infin. Dim. Anal. Quantum Probab. Rel. Topics \textbf{8}, no. 3 (2005), 515--545. 
       \bibitem{KY} A. D. Krystek and H. Yoshida, Generalized $t$-transformations of probability measures and deformed convolution, Probab. Math. Stat. \textbf{24} (2004), 97--119. 
    \bibitem{Leh} F. Lehner, Cumulants in noncommutative probability theory I, Math. Z. \textbf{248} (2004), 67--100.  
    \bibitem{Len1} R. Lenczewski, Decompositions of the additive free convolution, J. Funct. Anal. \textbf{246} (2007), 330--365. 
    \bibitem{Len2} R. Lenczewski, Operators related to subordination for free multiplicative convolutions, Indiana Univ. Math. J. \textbf{57}, no. 3 (2008), 1055--1103. 
    \bibitem{Len3} R. Lenczewski and R. Sa\l apata, Discrete interpolation between monotone probability and free probability, Infin. Dim. Anal. Quantum Probab. Rel. Topics \textbf{9}, no. 1 (2006), 77--106. 
    \bibitem{Len4} R. Lenczewski and R. Sa\l apata, Noncommutative Brownian motions associated with Kesten distributions and related Poisson processes, Infin. Dim. Anal. Quantum Probab. Rel. Topics \textbf{11}, no. 3 (2008), 351--375. 
     \bibitem{Maa} H. Maassen, Addition of freely independent random variables, J. Funct. Anal. \textbf{106} (1992), 409--438. 
     \bibitem{Mur3} N. Muraki, Monotonic convolution and monotonic L\'{e}vy-Hin\v{c}in formula, preprint, 2000. 
     \bibitem{Mur4} N. Muraki, The five independences as quasi-universal products, Infin. Dim. Anal. Quantum Probab. Rel. Topics \textbf{5}, no. 1 (2002), 113--134. 
     \bibitem{Mur5} N. Muraki, The five independences as natural products, Infin. Dim. Anal. Quantum Probab. Rel. Topics \textbf{6}, no. 3 (2003), 337--371. 
     \bibitem{N-S1} A. Nica and R. Speicher, \textit{Lectures on the Combinatorics of Free Probability}, London Math. Soc. Lecture Note Series, vol. 335, Cambridge Univ. Press, 2006.   
     \bibitem{O1} F. Oravecz, Fermi convolution, Infin. Dim. Anal. Quantum Probab. Rel. Topics \textbf{5}, no. 2 (2002), 235--242. 
     \bibitem{O2} F. Oravecz, The number of pure convolutions arising from conditionally free convolution, Infin. Dim. Anal. Quantum Probab. Rel. Topics \textbf{8}, no. 3 (2005), 327--355. 
     \bibitem{Sat} K. Sato, \textit{L\'{e}vy Processes and  Infinitely Divisible Distributions,} Cambridge University Press, Cambridge, 1999.  
     \bibitem{Saigo} H. Saigo, A simple proof for monotone CLT, Infin. Dim. Anal. Quantum Probab. Rel. Topics \textbf{13}, no. 2 (2010), 339--343. 
     \bibitem{Shi} A. N. Shiryayev, \textit{Probability},  Springer-Verlag, New York, 1984.  
     \bibitem{Spe2} R. Speicher, Multiplicative functions on the lattice of non-crossing partitions and free convolution, 
    Math. Ann. \textbf{298} (1994), 611--628.
     \bibitem{Spe1} R. Speicher, On universal products, in Free Probability Theory, papers from a Workshop on Random Matrices and Operator Algebra Free Products, Toronto, Canada 1995, ed. D. V. Voiculescu, Fields Inst. Commun. \textbf{12} (Amer. Math. Soc., 1997), 257--266.  
    \bibitem{S-W} R. Speicher and R. Woroudi, Boolean convolution,  in Free Probability Theory, papers from a Workshop on Random Matrices and Operator Algebra Free Products, Toronto, Canada 1995, ed. D. V. Voiculescu, Fields Inst. Commun. \textbf{12} (Amer. Math. Soc., 1997), 267--280. 
    \bibitem{V1} D. Voiculescu, Symmetries of some reduced free product algebras, Operator algebras and their connections with topology and 
    ergodic theory, Lect. Notes in Math. \textbf{1132}, Springer (1985), 556--588. 
     \bibitem{V2} D. Voiculescu, Addition of certain non-commutative random variables, J. Funct. Anal. \textbf{66} (1986), 323--346.
\end{thebibliography}
\end{document}